\journalname{...}
\date{ \phantom{b} \vspace{45mm}\phantom{e}}
\def\half{\tfrac{1}{2}}
\def\Half{\frac{1}{2}}
\newcommand\bfd{{\mathbf d}}
\newcommand\bfe{{\mathbf e}}
\newcommand\bff{{\mathbf f}}
\newcommand\bfg{{\mathbf g}}
\newcommand\bfn{{\mathbf n}}
\newcommand\bfr{{\mathbf r}}
\newcommand\bfu{{\mathbf u}}
\newcommand\bfv{{\mathbf v}}
\newcommand\bfw{{\mathbf w}}
\newcommand\bfx{{\mathbf x}}
\newcommand\bfy{{\mathbf y}}
\newcommand\bfz{{\mathbf z}}
\newcommand\bfA{{\mathbf A}}
\newcommand\bfH{{\mathbf H}}
\newcommand\bfF{{\mathbf F}}
\newcommand\bfK{{\mathbf K}}
\newcommand\bfM{{\mathbf M}}
\newcommand\bfR{{\mathbf R}}
\newcommand\bbA{{\mathbf A}}
\newcommand\bbM{{\mathbf M}}
\newcommand\bfV{{\mathbf V}}
\newcommand\bfeta{{\boldsymbol \eta}}
\newcommand\bfvartheta{{\boldsymbol \vartheta}}
\newcommand\Q{Q}
\newcommand\Qh{Q_h}
\newcommand\andquad{\quad\hbox{ and }\quad}
\renewcommand\d{\hbox{\rm{d}}}
\newcommand{\Ga}{\Gamma}
\newcommand{\laplace}{\Delta}
\newcommand{\nbg}{\nabla_{\Gamma}}
\newcommand{\nbgx}{\nabla_{\Gamma[X]}}
\newcommand{\nbgh}{\nabla_{\Gamma_h}}
\newcommand{\mat}{\partial^{\bullet}}
\newcommand{\diff}{\frac{\d}{\d t}}
\newcommand{\eps}{\varepsilon}
\newcommand{\inv}{^{-1}}
\newcommand{\nb}{\nabla}
\newcommand{\pa}{\partial}
\newcommand{\R}{\mathbb{R}}
\newcommand{\spn}{\textnormal{span}}
\def \t {(t)}
\def \to {\rightarrow}
\newcommand{\vphi}{\varphi}
\newcommand{\V}{V}
\newcommand{\phiv}{\varphi^v}
\newcommand{\phin}{\varphi^\n}
\newcommand{\phiH}{\varphi^H}
\newcommand{\phiw}{\varphi^{\V}}
\newcommand{\phiwn}{\varphi^{z}}
\def \s {(s)}
\newcommand{\Ih}{\widetilde{I}_h}
\newcommand{\M}{\bfM}
\newcommand{\Ms}{\bfM^\ast}
\newcommand{\A}{\bfA}
\newcommand{\As}{\bfA^\ast}
\newcommand{\K}{\bfK}
\newcommand{\Ks}{\bfK^\ast}
\newcommand{\normK}[1]{\| #1\|_{\bfK(\xs)}}
\newcommand{\normKt}[1]{\| #1\|_{\bfK(\xs(t))}}
\newcommand{\us}{\bfu^\ast}
\newcommand{\vs}{\bfv^\ast}
\newcommand{\xs}{\bfx^\ast}
\newcommand{\ws}{\bfw^\ast}
\newcommand{\dotus}{\dot\bfu^\ast}
\newcommand{\dotxs}{\dot\bfx^\ast}
\newcommand{\dotws}{\dot\bfw^\ast}
\newcommand{\ddotus}{\ddot\bfu^\ast}
\newcommand{\eu}{\bfe_\bfu}
\newcommand{\ev}{\bfe_\bfv}
\newcommand{\ex}{\bfe_\bfx}
\newcommand{\ew}{\bfe_\bfw}
\newcommand{\doteu}{\dot\bfe_\bfu}
\newcommand{\dotex}{\dot\bfe_\bfx}
\newcommand{\dotew}{\dot\bfe_\bfw}
\newcommand{\du}{\bfd_\bfu}
\newcommand{\dv}{\bfd_\bfv}
\newcommand{\dw}{\bfd_\bfw}
\newcommand{\dotdu}{\dot\bfd_\bfu}
\newcommand{\dotdw}{\dot\bfd_\bfw}
\newcommand{\n}{\nu}
\newcommand{\dof}{N}
\def\lengthcontrol{1}
\newcommand{\detailedproof}[1]{\if \lengthcontrol 1 \blueon \noindent \hrulefill \\ (\emph{Beginning of the detailed proof.}) \emph{To shorten set the variable} $\backslash$\texttt{lengthcontrol} \emph{to $0$ in the preamble.} \\ {#1} \\ (\emph{End of the detailed proof.}) \ \noindent \hrulefill \ \blueoff \fi}
\definecolor{darkred}{rgb}{.7,0,0}
\newcommand{\redon}{\color{red} }
\newcommand{\redoff}{\color{black}}
\newcommand{\blueon}{\color{blue}}
\newcommand{\blueoff}{\color{black}}
\newcommand{\ebk}{\color{black}}
\newcommand{\eby}{\color{black}}
\begin{document}

\title{A convergent evolving finite element algorithm for Willmore flow of closed surfaces}

\titlerunning{A convergent finite element algorithm for Willmore flow}        % if too long for running head

\author{Bal\'{a}zs~Kov\'{a}cs \and
	Buyang~Li \and
	Christian~Lubich
}

\authorrunning{B.~Kov\'{a}cs, B.~Li and Ch.~Lubich} % if too long for running head

\institute{B. Kov\'{a}cs and Ch. Lubich  \at
	Mathematisches Institut, Universit\"at T\"{u}bingen,\\
	Auf der Morgenstelle 10, 72076 T\"{u}bingen, Germany \\
	\email{\{kovacs,lubich\}@na.uni-tuebingen.de}
	\and
	B. Li \at
	Department of Applied Mathematics, Hong Kong Polytechnic University,\\
	Kowloon, Hong Kong \\
	\email{buyang.li@polyu.edu.hk}
}

\date{}
% The correct dates will be entered by the editor

\maketitle

\begin{abstract}
	A proof of convergence is given for a novel evolving surface finite element semi-discretization of Willmore flow  of closed two-dimensional surfaces, and also of surface diffusion flow. The numerical method proposed and studied here discretizes fourth-order evolution equations for the normal vector and mean curvature, reformulated as a system of second-order equations, and uses these evolving geometric quantities in the velocity law interpolated to the finite element space. This numerical method admits a convergence analysis in the case of continuous finite elements of polynomial degree at least two. The error analysis combines
	stability estimates and consistency estimates to yield optimal-order $H^1$-norm error bounds for the computed surface position, velocity, normal vector and mean curvature. The
	stability analysis is based on the matrix--vector formulation of the finite element method and does not use geometric arguments. The geometry enters only into the consistency estimates. Numerical experiments illustrate and complement the theoretical results.
	
	\keywords{Willmore flow \and surface diffusion flow \and geometric evolution equations \and evolving surface finite elements 
	%\and linearly implicit backward difference formula 
	\and stability \and convergence analysis}   \subclass{35R01 \and 65M60 \and 65M15 \and 65M12}
\end{abstract}

\section{Introduction}

The elastic bending energy or Willmore energy (named after Thomas Willmore, see \cite{Willmore65,Willmore_book}) of a surface $\Gamma$
%$$
%	\Ga[X(\cdot,t)] = \{ X(p,t) \,:\, p \in \Ga^0 \} ,
%$$
%described by a flow map $X:\Ga^0\times [0,T]\to \R^3$, 
is given as 
$$
W(\Gamma)=\tfrac12\int_{\Gamma} H^2  , 
$$
where $H$ is the mean curvature of the surface. {\it Willmore flow} is the $L^2$ gradient flow of surfaces for the elastic bending energy. It plays an important role in modelling lipid bilayers \cite{Helfrich}, biomembranes \cite{ElliottStinner_biomembranes}, vesicles \cite{BGN_vesicles}, regularization of phase-field systems \cite{Chen-Shen-2018}, and in the analysis of curvature on surfaces; see \cite{2014-Marques-Neves} proving the Willmore conjecture. 

The negative $L^2$ gradient of the Willmore energy $W$ for a two-dimensional surface $\Gamma$ in $\R^3$ has no tangential contribution and its normal component equals
%equals $V\nu$ with the normal vector field $\nu$ and
\begin{align}\label{Willmore_Eq}
V = \Delta_{\Gamma}H + H(\tfrac12 H^2 -2K)\quad\text{ on }\Ga,
\end{align}
with mean curvature $H=\kappa_1+\kappa_2$ (here taken without a factor $1/2$) and Gaussian curvature $K=\kappa_1\kappa_2$, where $\kappa_1$ and $\kappa_2$ are the principal curvatures on~$\Ga$.
Willmore \cite{Willmore_book} gives a proof of this result and attributes the formula to  Thomsen~\cite{Thomsen} (who mentions Schadow in 1922) and Blaschke~\cite{Blaschke}.

For  Willmore flow, $V$ of \eqref{Willmore_Eq} is taken as the normal velocity of the evolving surface. This yields a fourth-order geometric evolution equation.

%The flow map of Willmore flow,  $X:\Ga^0\times [0,T]\to \R^3$ defining the surfaces $\Ga[X(\cdot,t)] = \{ X(p,t) \,:\, p \in \Ga^0 \}$ for $0\le t \le T$, then
%satisfies the fourth-order geometric evolution equation (we omit the argument $t$)
%\begin{align}\label{Willmore_flow} 
%\begin{aligned}
%\partial_t X&= v\circ X  , \\
%v&= V \nu  \quad\text{on }\Ga[X] , %\label{Willmore_v} 
%\end{aligned}
%\end{align}
%where  $\nu$ is the normal vector field on $\Ga[X]$.
%We note that $v$ is the velocity of the evolving surface and $V$ is the normal velocity specified by \eqref{Willmore_Eq} on~$\Ga[X]$.

Numerical methods for Willmore flow %\eqref{Willmore_flow} with 
\eqref{Willmore_Eq} have been proposed in many articles. 
Algorithms based on evolving surface finite element methods  for Willmore flow were studied by Rusu \cite{Rusu}, Dziuk \cite{Dziuk_Willmore} and Barrett, Garcke \& N\"urnberg \cite{Barrett2007441,BGN2008Willmore} based on different variational formulations of \eqref{Willmore_Eq}. 
%In particular, Barrett, Garcke \& N\"urnberg used a special set of test functions in \cite{Barrett2007441,BGN2008Willmore}, similarly to their treatment for mean curvature flow in \cite{BGN2008}, to obtain almost equidistribution of mesh points. 
Bonito, Nochetto \& Pauletti \cite{Bonito-Nochetto-Sebastian-2010} considered surface finite elements for biomembranes described by Willmore flow under area and volume constraints (the Helfrich model).  Pozzi \cite{Pozzi_anisotropicWillmore} studied a numerical method for anisotropic Willmore flow. Numerical simulations indicate that these methods apparently converge in practical computations --- but to our knowledge, no proof of convergence has so far been obtained for any numerical method for the Willmore flow of closed surfaces. \ebk 
% D{\"o}rfler \& N{\"u}rnberg \cite{Dorfler-Nurnberg-2019} have considered numerical approximation to the $L^2$ gradient flow of the generalized elastic energy 
%$$E_\lambda(\Gamma[X])=\int_{\Gamma[X]}(G(H)+\lambda)\d\Gamma $$ 
%in the modeling of power loss within an optical fiber. 

%In the papers mentioned above, people have used $C^0$ finite elements based on mixed formulations of the parametric equation of the Willmore flow. 
For the Willmore flow of \emph{curves}, Bartels \cite{Bartels-2013} and Deckelnick \& Dziuk \cite{Deckelnick-Dziuk-2009}  proved convergence of finite element semi-discretizations. Pozzi \& Stinner \cite{PozziStinner_elasticcurves}  proved convergence of a semi-discrete surface finite element method for elastic flow of curves coupled to a lateral diffusion process. For the Willmore flow of \emph{graphs}, Deckelnick \& Dziuk proved convergence of a (non-evolving) $C^0$ mixed finite element method in \cite{Deckelnick-Dziuk-2006}, and a convergence result for $C^1$ finite elements was given in \cite{Deckelnick-Katz-Schieweck-2015}. 
However, convergence of a surface finite element method for the Willmore flow of {\it closed surfaces} has remained an open problem.  

Closely related to Willmore flow is the {\it surface diffusion flow}, which is the $H^{-1}$ gradient flow of the area functional
$|\Gamma|=\text{area}(\Gamma)$. The normal velocity of the evolving surface then becomes
%$$
%|\Gamma|=\int_{\Gamma}  \d\Gamma .
%$$
%It is governed by the fourth-order geometric evolution equation
\begin{align}\label{SF_Eq} 
V= \Delta_{\Gamma}H.
%\begin{aligned}
%\partial_t X&= v\circ X,  \\
%v&= V \n \quad\text{ with }\quad V= \Delta_{\Gamma[X]}H.
%\end{aligned}
\end{align}

The evolving surface described by \eqref{SF_Eq} is the limit of the zero level set of the solution to the Cahn--Hilliard equation with a concentration dependent mobility (when the parameter representing the thickness of phase transition zone tends to zero), see \cite{CahnElliottNovickCohen}. 
This model is often used to describe the diffusion-driven motion of the surface of a crystal, the motion of an interface in an alloy, and dewetting of thin solid films deposited on substrates; see \cite{Mullins-1957,Bao-2017-SIAP}. Since the equations of surface diffusion flow \eqref{SF_Eq} and Willmore flow \eqref{Willmore_Eq} have similar structure, they can often be approximated by similar numerical methods based on variational formulations of the equations. Numerical approximation to the surface diffusion flow \eqref{SF_Eq} has been studied by B\"ansch, Morin \& Nochetto \cite{Bansch-Morin-Nochetto-2005} by a surface finite element method with  mesh regularization, and by Barrett, Garcke \& N\"urnberg \cite{Barrett2007441} based on a similar variational formulation as for their Willmore flow algorithm. 
Recently, Bao et al. \cite{Bao-2017-SIAP,Bao-2017-JCP} and Zhao et al. \cite{Zhao-Jiang-Bao-2020} proposed finite element methods for the anisotropic surface diffusion flow with contact line migration in studying the evolution of solid-thin films on a substrate. 

Similar to the case of Willmore flow, convergence of finite element semi-discreti\-zations for the surface diffusion flow of \emph{graphs} and axially symmetric surfaces has been proved; see \cite{Bansch-Morin-Nochetto-2004,Deckelnick-Dziuk-Elliott-2003-SINUM,Deckelnick-Dziuk-Elliott-2005-SINUM}. However, convergence of a surface finite element algorithm for the surface diffusion flow of closed surfaces is still an open problem. 

{\it The objective of this article is to construct an evolving surface finite element algorithm that can be proved to be  convergent 
%(even of optimal order in the $H^1$-norm) 
for the Willmore flow of closed two-dimensional surfaces in three-dimensional space, and similarly for the surface diffusion flow.}

%This is accomplished by introducing a new formulation for the Willmore flow.
The key idea is to use fourth-order parabolic evolution equations for the mean curvature $H$ and the normal vector $\nu$ along the Willmore flow. We derive an algorithm based on the system that couples these evolution equations to the velocity law \eqref{Willmore_Eq}. Here, $H$ and $\nu$ are considered to be independently evolving unknowns that are not directly extracted from the surface at any given time. This is different from the previously mentioned approaches.
%, which solve \eqref{Willmore_Eq} by directly using geometric quantities  of the surface, 
%using geometric identities for surfaces or known properties of the Willmore flow.  

%The main advantage of the new formulation for the Willmore flow is that the evolution equations for $H$ and $\nu$ have complete parabolicity, which allows us to prove stability and convergence for an evolving surface finite element semi-discretization. 

This approach is motivated by our recent work on mean curvature flow in \cite{MCF}. There we used the coupled system with the evolution equations for $H$ and $\nu$, which were derived by Huisken \cite{Huisken1984} along with evolution equations for other geometric quantities, to construct the first provably convergent finite element algorithm for mean curvature flow of closed surfaces. For Willmore flow, the corresponding evolution equations of $H$ and $\nu$ are derived in this paper. Their general structure as fourth-order quasilinear parabolic equations was already derived by Kuwert \& Sch\"atzle \cite{KuwertSchaetzle_Willmore2,KuwertSchaetzle_Willmore} for proving existence results for Willmore flow, 
but the precise form of the equations as needed for computations was not given and it was not evident that the equations for $H$ and $\nu$ form a closed system that does not involve further geometric quantities.

%We solve the geometric evolution equations by using ESFEM with $C^0$ finite elements based on a mixed variational formulation, which introduces new difficulties to the stability estimates, which require us to estimating the material derivative equation of the error equation in our matrix-vector formulation. 
 
The velocity equation and the coupled fourth-order evolution equations for the normal vector and mean curvature are rewritten as a second-order differential--algebraic system. The geometric evolution equations have a particular \emph{anti-symmetric} structure. 
Based on a mixed variational formulation, we use evolving surface finite elements (of degree at least $2$) for the semi-discretization of the coupled system, which preserves the anti-symmetry of the second-order system. 
The velocity law is approximated simply by finite element interpolation, in contrast to enforcing it using a Ritz projection as in \cite{MCF} for mean curvature flow.

Optimal-order $H^1$-norm semi-discrete convergence estimates are shown for all variables $X,v,\nu,H$ by clearly separating the issues of stability and consistency. %The stability analysis is entirely free from geometric arguments.
%Since some of the variables in the system are determined by algebraic equations, we need to modify the initial values in order to obtain optimal-order error estimates. 
Convergence is shown towards sufficiently regular solutions of Willmore flow, which excludes the formation of singularities (within the considered time interval). % kb: To make clear that a singular trajectory is not completely excluded.

The main issue in the paper is proving {\it stability}, which is here understood as bounding the errors in terms of consistency defects and initial errors. For the velocity law, stability is shown using a stability bound for the interpolation of products of surface finite element functions. The main idea for the stability estimates for the second-order system for the geometric variables is to exploit the \emph{anti-symmetric} structure of the semi-discrete error equations and combine it with \emph{multiple energy estimates}, testing with both the errors and their time derivative. The structure of the energy estimates is sketched in Figure~\ref{fig:energy estimates}. The proof is performed in the matrix--vector formulation of the numerical method, and it uses technical lemmas relating different finite element surfaces that were shown in \cite[Section~4]{KLLP2017} and \cite[Section~7.1]{MCF}. 
A key step in the proof is to establish $W^{1,\infty}$-norm error bounds for all variables, which are obtained from the time-uniform $H^1$-norm error bounds using an inverse estimate. 
The stability analysis is completely independent of geometric arguments. 

The {\it consistency} analysis, i.e.~proving estimates for the defects (the residuals obtained upon inserting
appropriate projections of the exact solution into the method) and their time derivatives is based on \cite[Section~8]{KLLP2017} and \cite[Section~9]{MCF}, which in turn are based on geometric estimates  in \cite{Demlow2009,Dziuk88,DziukElliott_ESFEM,DziukElliott_L2,Kovacs2017}.
Together with error bounds for the initial values, it uses geometric error estimates, interpolation and Ritz map error bounds. Due to a non-symmetric divergence term for one of the variables a modified Ritz map is used, similar to the one in \cite{LubichMansour_wave}.

The paper is organized as follows. 
Section~\ref{section:Willmore flow}, besides introducing basic notations and geometric concepts, is mainly devoted to deriving the evolution equations for the mean curvature and the normal vector along Willmore flow. The coupled second-order system and its weak formulation, which serves as the basis of the algorithm, is presented there. 
Section~\ref{section:ESFEM} describes the evolving surface finite element semi-discretization, the matrix--vector formulation, and the modified semi-discrete problem to correct the initial values of the algebraic variables.
Section~\ref{section: main result} states the main result of the paper: optimal-order semi-discrete error bounds in the $H^1$-norm for the errors in positions, velocity, normal vector and mean curvature.
In Section~\ref{section:stability} we prove the stability result after presenting the required auxiliary results.
Section~\ref{section:Defect} contains the consistency analysis: the bounds for the defects and their time derivatives, and estimates for the errors in the initial values.
Section~\ref{section:numerics} presents numerical experiments illustrating and complementing our theoretical results.

%[???] 
%Due to the length of the paper, many proofs are greatly shortened. A version with complete proofs can be found on our homepage at \url{https://na.uni-tuebingen.de/...???}.
%

\section{Evolution equations for Willmore flow}
\label{section:Willmore flow}

\subsection{Basic notions and notation}%\footnote{This subsection is taken verbatim from \cite{MCF}.} }
\label{subsection: basic notions}

{\it (The text of this preparatory subsection is taken verbatim from \cite[Section~2.1]{MCF}.)} 
We consider the evolving two-dimensional closed surface $\Gamma(t)\subset\R^3$ as the image
$$
	\Ga(t) = \{ X(p,t) \,:\, p \in \Ga^0 \}
$$
of a smooth mapping $X:\Ga^0\times [0,T]\to \R^3$ such that $X(\cdot,t)$ is an embedding for every $t$. Here, $\Ga^0$ is a smooth closed initial surface, and $X(p,0)=p$. 
In view of the subsequent numerical discretization, it is convenient to think of $X(p,t)$ as the position at time $t$ of a moving particle with label $p$, and of $\Ga(t) $ as a collection of such particles. 
To indicate the dependence of the surface on~$X$, we will write
$$
	\Ga(t) = %\Ga[X_t] =
	\Ga[X(\cdot,t)] , \quad\hbox{ or briefly}\quad \Ga[X]
$$
when the time $t$ is clear from the context. The {\it velocity} $v(x,t)\in\R^3$ at a point $x=X(p,t)\in\Gamma(t)$  equals
\begin{equation} \label{velocity}
	\partial_t X(p,t)= v(X(p,t),t).
\end{equation}
For a known velocity field  $v$,  the position $X(p,t)$ at time $t$ of the particle with label $p$ is obtained by solving the ordinary differential equation \eqref{velocity} from $0$ to $t$ for a fixed $p$.

For a function $u(x,t)$ ($x\in \Gamma(t)$, $0\le t \le T$) we denote the {\it material derivative} (with respect to the parametrization $X$) as
$$
\mat u(x,t) = \frac \d{\d t} \,u(X(p,t),t) \quad\hbox{ for } \ x=X(p,t).
$$
For the following notions, see the review \cite{DeckelnickDE2005} or \cite[Appendix~A]{Ecker2012} or any textbook on differential geometry. 
On any regular surface $\Gamma\subset\R^3$, we denote by $\nabla_{\Ga}u:\Gamma\to\R^3$ the  {\it tangential gradient} of a function $u:\Gamma\to\R$, and in the case of a vector-valued function $u=(u_1,u_2,u_3)^T:\Gamma\to\R^3$, we let
$\nabla_{\Ga} u = (\nabla_{\Ga}u_1, \nabla_{\Ga}u_2, \nabla_{\Ga}u_3)$. We thus use the convention that the gradient of $u$ has the gradient of the components as column vectors. 
We denote by $\nabla_{\Ga} \cdot f$ the {\it surface divergence} of a vector field $f$ on $\Gamma$, 
and by %$\laplace_{\Ga[X]} u=\nabla_{\Ga[X]}\cdot \nabla_{\Ga[X]}u$
$\laplace_{\Ga} u=\nabla_{\Ga}\cdot \nabla_{\Ga}u$ the {\it Laplace--Beltrami operator} applied to $u:\Gamma\to\R$. In the case of a
vector-valued function $u=(u_1,u_2,u_3)^T:\Gamma\to\R^3$, we set componentwise $\laplace_{\Ga} u = (\laplace_{\Ga} u_1,\laplace_{\Ga} u_2,\laplace_{\Ga} u_3)^T$. 
(In the case of a tangential vector field $u$, this componentwise Laplace--Beltrami operator differs from the intrinsic definition of the Laplace--Beltrami operator on tangential vector fields.)

%{\color{blue}If $f$ is a general vector field  on $\Gamma$, then we define its surface divergence as 
%$$\nabla_{\Ga} \cdot f:=\nabla_{\Ga} \cdot P_{\rm tan}f ,$$ where $P_{\rm tan}f$ denotes the tangential component of $f$ at each point of the surface.} 

We denote the unit outer normal vector field to $\Gamma$ by $\n:\Gamma\to\R^3$. Its surface gradient contains the (extrinsic) curvature data of the surface $\Gamma$. At every $x\in\Gamma$, the matrix of the {\it extended Weingarten map},
$$
	A(x)=\nabla_\Gamma \n(x),
$$ 
is a symmetric $3\times 3$ matrix (see, e.g., \cite[Proposition~20]{Walker2015}). Apart from the eigenvalue $0$ with eigenvector $\n(x)$, its other two eigenvalues are
the principal curvatures $\kappa_1$ and $\kappa_2$ at the point $x$ on the surface. They determine the fundamental quantities
%\begin{align}
%\label{Def-H-A2}
$$
	H:={\rm tr}(A)=\kappa_1+\kappa_2, \qquad 
	|A|^2 = \kappa_1^2 +\kappa_2^2 ,
$$
%\end{align}
where $|A|$ denotes the Frobenius norm of the matrix $A$.
Here, $H$ is called the {\it mean curvature} (as in most of the literature, we do not put a factor 1/2). 
%{\it Gaussian curvature} is
%$$
%K:=  \kappa_1 \kappa_2 = \half (H^2 - |A|^2) .
%$$

\subsection{Evolution equations for normal vector and mean curvature of a surface moving under Willmore flow}
Willmore flow  sets the normal velocity $V=v\cdot\nu$ of the surface $\Ga[X]$ to $V$ of \eqref{Willmore_Eq}, that is,
\begin{equation}
\label{eq:Willmore-velocity}
	V=  \Delta_{\Ga[X]}H  +Q \quad\text{ with }\quad Q= - \half H^3 + |A|^2 H.
\end{equation}
 (In the surface diffusion flow~\eqref{SF_Eq}, we have $Q=0$.) 
 In this paper we choose the velocity \eqref{velocity} to have only a normal component, so that
\begin{equation}\label{V}
v= V\nu.
\end{equation}
The geometric quantities on the right-hand sides of \eqref{eq:Willmore-velocity}--\eqref{V} satisfy the following evolution equations.
\begin{lemma}
\label{lemma:evolution equations}
	For a regular surface $\Ga(t)=\Ga[X(\cdot,t)]$ moving under Willmore flow, the outer normal vector $\n$ and the mean curvature $H$ satisfy the following equations, with the extended Weingarten map $A=\nb_{\Ga} \n$, and with %Gaussian curvature $K$ and 
	$Q$ of \eqref{eq:Willmore-velocity} (we omit the ubiquitous argument $t$),
	\begin{align}
		\label{Eq_H}
		\mat H = &\ - \big(  \laplace_{\Ga} + |A|^2 \big) ( \Delta_{\Ga}H + Q ) , \\
		\label{Eq_n} 
		\mat \n =  &\ \big( - \laplace_{\Ga} + (HA - A^2) \big) ( \Delta_{\Ga} \n + |A|^2 \n ) + |\nb_{\Ga} H|^2 \nu \\
		&\ - 2 \big( \nabla_{\Ga} \cdot (A\nb_{\Ga} H) \big) \, \n 
		- A^2 \nb_{\Ga} H 
		- \nb_{\Ga}  Q . \nonumber
	\end{align}
\end{lemma}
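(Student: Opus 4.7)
The strategy is to combine two classical transport identities for a surface moving under a purely normal velocity $v = V\nu$,
\begin{equation*}
\mat H = -\Delta_\Gamma V - |A|^2 V \qquad\text{and}\qquad \mat \nu = -\nabla_\Gamma V,
\end{equation*}
with the specific Willmore choice $V = \Delta_\Gamma H + Q$, and to reshape the resulting right-hand sides so that the leading differential operator appears as $\Delta_\Gamma$ applied to $\Delta_\Gamma H + Q$ and to $\Delta_\Gamma \nu + |A|^2 \nu$, respectively. I would briefly rederive these two transport identities along the lines of \cite{MCF}: differentiating $|\nu|^2 = 1$ along $\partial_t X = V\nu$ shows that $\mat \nu$ is tangential; equating $\partial_t \partial_p X = \partial_p(V\nu)$ and projecting onto the tangent space yields $\mat \nu = -\nabla_\Gamma V$; the evolution of the second fundamental form then gives the first identity.

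Equation \eqref{Eq_H} is immediate by direct substitution of $V = \Delta_\Gamma H + Q$. For \eqref{Eq_n}, the starting point is $\mat \nu = -\nabla_\Gamma(\Delta_\Gamma H) - \nabla_\Gamma Q$, and the task is to express $\nabla_\Gamma(\Delta_\Gamma H)$ as $\Delta_\Gamma$ applied to a second-order geometric quantity plus lower-order correction terms. The key intermediate identity, obtained componentwise in $\R^3$ by combining $\nabla_\Gamma \nu = A$ with the contracted Codazzi--Mainardi equation, is
\begin{equation*}
\Delta_\Gamma \nu + |A|^2 \nu = \nabla_\Gamma H.
\end{equation*}
Applying the componentwise Laplace--Beltrami operator to both sides and rearranging, $-\nabla_\Gamma(\Delta_\Gamma H)$ becomes $-\Delta_\Gamma(\Delta_\Gamma \nu + |A|^2 \nu)$ plus the commutator $\Delta_\Gamma \nabla_\Gamma H - \nabla_\Gamma \Delta_\Gamma H$.

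Working in a local orthonormal frame of principal directions and using the Gauss formula $\partial_\alpha \partial_\beta X = \Gamma^\gamma_{\alpha\beta} \partial_\gamma X + h_{\alpha\beta} \nu$, this commutator splits into an intrinsic Bochner contribution proportional to the Gaussian curvature, namely $K \nabla_\Gamma H$, plus extrinsic contributions arising from the discrepancy between the componentwise and the intrinsic Laplacian acting on the tangential vector field $\nabla_\Gamma H$. The Cayley--Hamilton identity $A^2 - HA + K\,I = 0$ for the $2\times 2$ Weingarten map on the tangent bundle rewrites $K \nabla_\Gamma H$ as $(HA - A^2)\nabla_\Gamma H = (HA - A^2)(\Delta_\Gamma \nu + |A|^2 \nu)$, producing the first-order operator in \eqref{Eq_n}. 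The extrinsic corrections are expected to supply precisely the tangential term $-A^2 \nabla_\Gamma H$ together with the normal contributions $|\nabla_\Gamma H|^2 \nu - 2(\nabla_\Gamma \cdot (A\nabla_\Gamma H))\nu$. The main obstacle is exactly this last bookkeeping step: verifying that every term generated by the componentwise-versus-intrinsic commutator expansion assembles, without remainder, into the right-hand side of \eqref{Eq_n} as stated, and not, say, into a different but equivalent arrangement that would obscure the anti-symmetric structure exploited later in the stability analysis.
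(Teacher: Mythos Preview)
Your proposal is correct and follows essentially the same route as the paper: the transport identities $\mat H = -\Delta_\Gamma V - |A|^2 V$ and $\mat\nu = -\nabla_\Gamma V$, the relation $\nabla_\Gamma H = \Delta_\Gamma\nu + |A|^2\nu$, and the commutator $\Delta_\Gamma\nabla_\Gamma H - \nabla_\Gamma\Delta_\Gamma H$ are exactly the ingredients the paper uses. The only difference is that the paper isolates the commutator computation you describe as the ``main obstacle'' into a separate lemma (Lemma~\ref{lemma:gradient-laplace commutator formula}), which states the explicit formula
\[
\Delta_\Gamma\nabla_\Gamma f - \nabla_\Gamma\Delta_\Gamma f = (HA-A^2)\nabla_\Gamma f + (\Delta_\Gamma\nu\cdot\nabla_\Gamma f)\nu - 2\bigl(\nabla_\Gamma\cdot(A\nabla_\Gamma f)\bigr)\nu - A^2\nabla_\Gamma f,
\]
so that the final assembly becomes a two-line substitution rather than the open-ended bookkeeping you flag; your Bochner-plus-extrinsic split and the Cayley--Hamilton rewriting of $K\nabla_\Gamma H$ as $(HA-A^2)\nabla_\Gamma H$ are precisely how that lemma is interpreted (and proved in the Appendix).
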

To our knowledge, the exact formulation of these evolution equations has not been reported  in the literature before. In~\cite{KuwertSchaetzle_Willmore}, the general structure of the evolution equations as fourth-order quasilinear parabolic equations is given but the lower-order terms are not stated explicitly and are therefore not available for computations. 

A noteworthy feature of these evolution equations is that $\nb_{\Ga} A$ and the Hessian of $H$ do not appear, but only $\nb_{\Ga} H$ and the term $\nabla_{\Ga} \cdot (A\nb_{\Ga} H)$ in divergence form. This will allow us to work with the evolution equations for only $H$ and $\nu$ in the numerical discretization, without the need for an evolution equation for $A$ or further geometric quantities.

The following commutator formula will be needed to prove Lemma~\ref{lemma:evolution equations}. Its proof is given in the Appendix. 
\begin{lemma}
\label{lemma:gradient-laplace commutator formula}
For any regular surface $\Ga$ and for any regular func\-tion~$f$ on $\Ga$ the following commutator formula holds true:
\begin{equation}
\label{eq:gradient-laplace commutator formula}
	\begin{aligned}
		&\laplace_{\Ga} \nbg f - \nbg \laplace_{\Ga} f \\
		&= (H A - A^2) \, \nb_{\Ga} f 
		+ (\Delta_\Gamma \nu \cdot \nbg f) \nu 
		- 2\big( \nbg \cdot (A\nbg f) \big) \, \nu 
		- A^2 \nb_{\Ga} f .
	\end{aligned}
\end{equation}
\end{lemma}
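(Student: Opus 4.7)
The plan is to split the commutator $\laplace_{\Ga}\nbg f - \nbg \laplace_{\Ga} f$ into its normal and tangential components with respect to $\Ga$ and prove each separately; since $\nbg \laplace_{\Ga} f$ is tangential, it contributes nothing to the normal part, so the two halves are decoupled.

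For the normal part, the key observation is the pointwise identity $\nbg f \cdot \nu \equiv 0$ on $\Ga$. Applying $\laplace_{\Ga}$ to this scalar identity and using the componentwise Leibniz rule $\laplace_{\Ga}(u\cdot v) = (\laplace_{\Ga} u)\cdot v + 2\,\nbg u : \nbg v + u \cdot \laplace_{\Ga} v$, together with $\nbg \nu = A$, yields $(\laplace_{\Ga} \nbg f)\cdot\nu = -2A : \nbg(\nbg f) - \nbg f \cdot \laplace_{\Ga} \nu$. In parallel, the product rule for the surface divergence applied to the vector field $A\nbg f$, combined with the identity $\nbg \cdot A = \laplace_{\Ga} \nu$ (which follows from the symmetry $A_{ij}=A_{ji}$), gives $\nbg \cdot (A\nbg f) = \laplace_{\Ga}\nu \cdot \nbg f + A : \nbg(\nbg f)$. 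Eliminating the symmetric contraction $A : \nbg(\nbg f)$ between these two identities reproduces precisely the two normal terms $(\laplace_{\Ga}\nu \cdot \nbg f)\nu - 2(\nbg \cdot (A\nbg f))\nu$ claimed in \eqref{eq:gradient-laplace commutator formula}.

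For the tangential part, the target is $(\laplace_{\Ga} \nbg f - \nbg \laplace_{\Ga} f)^T = K\nbg f - A^2 \nbg f$, with $K = \kappa_1\kappa_2$ the Gauss curvature, relying on the fact that $HA - A^2$ acts as $K\,I$ on the tangent plane (checked on eigenvectors of $A$: $H\kappa_i - \kappa_i^2 = (\kappa_1+\kappa_2)\kappa_i - \kappa_i^2 = \kappa_1\kappa_2 = K$). I would work pointwise in a local orthonormal frame $\{e_1,e_2\}$ around a point $p\in\Ga$, chosen geodesically normal at $p$ and diagonalizing $A$ at $p$. The Gauss formula for tangential fields $Y$, $D_v Y = \nabla^{\Ga}_v Y - \langle AY,v\rangle \nu$ (with $D$ the componentwise ambient derivative and $\nabla^{\Ga}$ the Levi-Civita connection), combined with the Weingarten relation $D_v\nu = Av$, allows one to expand $D_{e_j}D_{e_j}X$ at $p$ and extract, for any tangential vector field $X$, the relation $(\laplace_{\Ga} X)^T = \Delta^{\mathrm{int}}_{\Ga} X - A^2 X$, where $\Delta^{\mathrm{int}}_{\Ga}$ denotes the intrinsic Bochner Laplacian on $T\Ga$. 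Applied to $X = \nbg f$ and combined with the intrinsic Ricci identity $\Delta^{\mathrm{int}}_{\Ga} \nbg f = \nbg \laplace_{\Ga} f + K\nbg f$ (on a two-dimensional surface, $\mathrm{Ric} = K\,g$), this completes the tangential part.

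The main technical step is the derivation of the correction $(\laplace_{\Ga} X)^T = \Delta^{\mathrm{int}}_{\Ga} X - A^2 X$ for tangential $X$: it requires careful tracking of the extra normal contributions produced when the componentwise ambient second derivatives act on a tangential vector field, and cleanly separating them from the intrinsic Levi-Civita pieces; in particular, the $-A^2 X$ correction arises from differentiating $\langle AX,e_j\rangle\nu$ via $D_{e_j}\nu = Ae_j$ and summing over $j$. Once this correction is in hand, reassembling the normal part, the tangential part, and the pointwise identity $(HA - A^2)\nbg f = K\nbg f$ yields the right-hand side of \eqref{eq:gradient-laplace commutator formula}.
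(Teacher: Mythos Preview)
Your proof is correct and takes a genuinely different route from the paper's. The paper (in the Appendix) works entirely in index notation using the Dziuk--Elliott commutator $D_iD_ju - D_jD_iu = A_{jk}D_ku\,\nu_i - A_{ik}D_ku\,\nu_j$ and computes $D_iD_iD_ju$ by applying this identity twice and collecting terms; no normal/tangential splitting and no intrinsic Riemannian machinery is invoked. Your approach instead separates the two components conceptually: the normal part follows directly from differentiating the constraint $\nbg f\cdot\nu\equiv 0$ together with product rules, while the tangential part is obtained from the comparison $(\laplace_\Ga X)^T=\Delta^{\mathrm{int}}_\Ga X - A^2X$ for tangential $X$ combined with the Ricci/Bochner identity $\Delta^{\mathrm{int}}_\Ga\nbg f=\nbg\laplace_\Ga f + K\nbg f$. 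Both arguments are valid; the paper's has the advantage of staying entirely within the elementary tangential-calculus framework (the only external input is the single commutator lemma), whereas yours is more geometric and explains the origin of each term --- the extra $-A^2\nbg f$ as the extrinsic-versus-intrinsic Laplacian correction, and $(HA-A^2)\nbg f = K\nbg f$ as the Gauss-curvature Ricci contribution. The paper in fact remarks that the authors had a second proof ``based on local coordinates'' which they did not present; your argument is essentially in that spirit.
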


\medskip\noindent
We remark that for the intrinsic Laplace--Beltrami operator applied to the tangential vector field $\nbg f$ a simpler commutator formula holds, with only the first term on the right-hand side. However, for our numerical purposes we need the componentwise Laplace--Beltrami operator as considered here. Since it can be shown that $H A - A^2 = K(I-\n\n^T)$, the first term on the right-hand side actually simplifies to $K \nb_{\Ga} f$. We prefer, however, to work with the first term as stated, since in the numerical method $\nb_{\Ga} f $ will be replaced by a vector field that is only approximately tangential.

%\bby
%\begin{remark}
%In the weak formulation, we may use integration by parts for the new term 
%\begin{align*}
%	\int_{\Gamma[X]}
%	- 2\, \nabla_{\Gamma}\cdot (A\nb_{\Ga} H) \, \nu \cdot \varphi 
%	&=  \int_{\Gamma[X]}
%	 2\, (A\nb_{\Ga} H) \cdot \nabla_{\Gamma}(\nu \cdot \varphi) \\
%	 &= \int_{\Gamma[X]}
%	 2\, (A\nb_{\Ga} H) \cdot (A\varphi) 
%	 + 
%	 2\, (A\nb_{\Ga} H) \cdot [(\nabla_{\Gamma}\varphi)\nu] . 
% \end{align*}
%\end{remark}
%\eby

\begin{proof}[of Lemma~\ref{lemma:evolution equations}]
For a general regular surface flow, 
%for the normal vector and mean curvature there holds (cf.~\cite{Mantegazza,Ecker2012}, \cite[Lemma~2.16]{BGN_survey})
%\begin{equation}
%\label{eq:normal and mean curvature PDE}
%	\nabla_{\Ga[X]} H = \Delta_{\Ga[X]} \n + |A|^2 \n .
%\end{equation}
the material derivatives of the outer normal vector and mean curvature satisfy, with $\Gamma=\Gamma[X(\cdot,t)]$ for short, 
%differential equations involving the normal velocity $V$,
\begin{align}
\label{eq:material derivative of normal}
	\mat \n = &\ - \nabla_{\Ga} V , \\
\label{eq:material derivative of mean curvature}
	\mat H = &\ - \laplace_{\Ga} V - |A|^2 V,
\end{align}
see \cite{Mantegazza,Ecker2012}, or Lemma~2.37~(ii) and Lemma~2.39~(ii) in \cite{BGN_survey}\footnote{In \cite{BGN_survey}  the opposite sign convention for the normal vector is used.}.
%[kb: The signs here are consistent with mean curvature flow $V=-H$.] 

The evolution equation  \eqref{Eq_H}  for $H$ is thus obtained directly by inserting \eqref{eq:Willmore-velocity} into \eqref{eq:material derivative of mean curvature}.

For the derivation of the evolution equation for $\n$ we use \eqref{eq:material derivative of normal} and \eqref{eq:Willmore-velocity}, and obtain
%	that on any surface $\Ga[X]$ we have (see \cite[p.~33]{Mantegazza})
%	\begin{equation*}
%		\mat \n = - (\nb_{\Ga[X]} v) \n .
%	\end{equation*}
%	
%	We start by plugging in the velocity law \eqref{eq:Willmore-velocity} into the above equation, and using the product rule and $\nb_{\Ga[X]} \n \n = 0$ again, we obtain
\begin{align*}
	\mat \n 
	%= &\ - \nb_{\Ga} \Big( \Big( \Delta_{\Ga}H - \Half H^3 + |A|^2 H \Big) \n \Big) \n \\
	= &\ - \nb_{\Ga} \bigl( \Delta_{\Ga}H + Q \bigr) .
\end{align*}

We now use the commutator formula \eqref{eq:gradient-laplace commutator formula} with $f=H$, together with the following expression for the tangential gradient of mean curvature: 
\begin{equation} \label{nabla-h}
	\nbg H = \Delta_{\Ga} \n + |A|^2 \n ,
\end{equation}
see \cite[(A.9)]{Ecker2012} or \cite[Proposition~24]{Walker2015}. This yields
\begin{align*}
	\mat \n = &\  - \nb_{\Ga} \Delta_{\Ga}H  
	- \nb_{\Ga} Q  \\
	= &\ \big( - \laplace_{\Ga} + (H A - A^2) \big) \, \nbg H + (\Delta_\Gamma \nu \cdot \nb_{\Ga} H) \nu \\
	&\ - 2 \big( \nabla_{\Gamma}\cdot (A\nb_{\Ga} H) \big) \, \n 
	- A^2 \nb_{\Ga} H 
	- \nb_{\Ga}  Q \\
	= &\ \big( - \laplace_{\Ga} + (H A - A^2) \big) (\Delta_{\Ga} \n + |A|^2 \n) + ( (\nbg H - |A|^2 \n) \cdot \nb_{\Ga} H) \nu \\
	&\ - 2 \big( \nabla_{\Gamma} \cdot (A\nb_{\Ga} H) \big) \, \n 
	- A^2 \nb_{\Ga} H 
	- \nb_{\Ga}  Q , 
\end{align*}
which becomes the stated evolution equation for the normal vector on noting that $\n\cdot \nb_{\Ga} H=0$.
\qed
\end{proof}

\subsection{The system of equations used for discretization}
Collecting the above equations, we have reformulated Willmore flow as the system of nonlinear fourth-order parabolic equations \eqref{Eq_H}--\eqref{Eq_n} on the surface coupled to the ordinary differential equations \eqref{velocity} for the surface positions 
via the velocity law  \eqref{eq:Willmore-velocity}--\eqref{V}.
%\begin{align*}
%\pa_t X = &\ v \circ X  , \\
%v = &\ \Big( \Delta_{\Ga[X]}H - \half H^3 + |A|^2 H \Big) \n , \\
%\mat H = &\ \Big( - \laplace_{\Ga[X]} - |A|^2 \Big) \Big( \Delta_{\Ga[X]}H - \half H^3 + |A|^2 H \Big) , \\
%\mat \n = &\ - \laplace_{\Ga[X]} \big(\laplace_{\Ga[X]}\n + |A|^2 \n \big) + \nb_{\Ga[X]} \Big( \half H^3 - |A|^2 H \Big) \\
%&\ + (HA - A^2)  \nb_{\Ga[X]} H  ,
%\end{align*}
%[kb: I think $\pa_t X = v$ should be clear, sometimes the notation $\circ X$ can even overcomplicate things.] 
%where, as always, $A=\nb_{\Ga[X]} \n$. 

By introducing two new variables $\V$ and $z$ (we note that $V$ is the normal velocity of the surface from \eqref{eq:Willmore-velocity}), the above system of fourth-order equations is reformulated as a system of formally second-order problems:
\begin{subequations}
\label{eq:Willmore evolution equations - system}
	\begin{alignat}{3}
%		\pa_t X = &\ v  \circ X  ,  \label{X-equation} \\
%		v = &\ \V \n , \label{v-equation}
%		\intertext{together with} 
		\mat H = &\  - \laplace_{\Ga[X]}\V - |A|^2 \V , \\
		\V = &\ \Delta_{\Ga[X]}H + Q ;
		\\[2mm]
		\mat \n = &\ - \laplace_{\Ga[X]} z + (H A - A^2) \, z 
		+ |\nabla_{\Gamma[X]}  H|^2 \nu 
		- 2 \big( \nabla_{\Gamma[X]} \cdot (A\nb_{\Ga[X]} H) \big) \, \n   \nonumber\\
		&\ \ \phantom{- \laplace_{\Ga[X]} z + K z } - A^2 \nb_{\Ga[X]} H  - \nb_{\Ga[X]}Q , \\
		z = &\ \laplace_{\Ga[X]}\n + |A|^2 \n. 
	\end{alignat}
\end{subequations}
This system is coupled with the equations for position and velocity,
\begin{subequations}
\label{Xv-eqs}
   \begin{alignat}{3}
		\pa_t X = &\ v  \circ X  ,  \label{X-equation} \\
		v = &\ \V \n . \label{v-equation}
   \end{alignat}
\end{subequations}
Note that the normal velocity $V$ appears twice: in the velocity equation \eqref{v-equation} and in the evolution equation for mean curvature.

The numerical discretization is based on a weak formulation of \eqref{eq:Willmore evolution equations - system}: we search for $(H,V,\n,z)$ such that %, with the abbreviation $\Q = \tfrac12 H^3 - |A|^2 H$ for the cubic term,
\begin{subequations}
\label{eq:weak form}
	\begin{align}
%		% velocity v
%		\label{eq:weak form - v}
%		& \int_{\Ga[X]} \!\!\! \nb_{\Ga[X]} v \cdot  \nb_{\Ga[X]} \phiv + 
%		\int_{\Ga[X]} \!\!\! v \cdot \phiv 
%		\nonumber \\ &
%		= \int_{\Ga[X]} \!\!\! \nb_{\Ga[X]}( \V \n) \cdot \nb_{\Ga[X]}\phiv + 
%		\int_{\Ga[X]} \!\!\! \V \n \cdot \phiv , \quad \andquad \\[4mm]
		% mean curvature H
		\label{eq:weak form - H}
		&	 \int_{\Ga[X]} \!\!\! \mat H \, \phiH - \int_{\Ga[X]} \!\!\! \nb_{\Ga[X]} \V \cdot  \nb_{\Ga[X]} \phiH = -\int_{\Ga[X]} \!\!\! |A|^2 \,  \V \, \phiH ,
		 \\[1mm]
		% normal velocity V
		\label{eq:weak form - V}
		& \int_{\Ga[X]} \!\!\! \V \phiw + \int_{\Ga[X]} \!\!\! \nb_{\Ga[X]} H \cdot \nb_{\Ga[X]} \phiw = \int_{\Ga[X]} \!\!\!  \Q \, \phiw ; 
		\\[4mm]
		% normal vector \nu
		\label{eq:weak form - nu}
		&	 \int_{\Ga[X]} \!\!\! \mat \n \cdot \phin - \int_{\Ga[X]} \!\!\! \nb_{\Ga[X]} z \cdot \nb_{\Ga[X]} \phin 
		=  \int_{\Ga[X]} \! (H A - A^2) z \cdot \phin 
		\nonumber \\
		& \qquad\qquad\quad		
		 +  \int_{\Ga[X]} \big( |\nb_{\Ga[X]} H|^2 \nu + A^2 \nb_{\Ga[X]} H \big) \cdot \phin 
		\nonumber \\
%		& \qquad\qquad\quad		
%		+ 2 \int_{\Ga[X]} \!\!\! (A\nbg H) \cdot \nbg(\n \cdot \phin) 
%		- 2 \int_{\Ga[X]} \!\!\! (\n \cdot \phin) H \,(\nu \cdot A\nbg H) % kb: this term is zero
		& \qquad\qquad\quad		
		 + 2 \int_{\Ga[X]} \!\!\! (A\nb_{\Ga[X]} H) \cdot (\nb_{\Ga[X]} \phin \n ) 
		\nonumber \\
		& \qquad\qquad\quad		
		+  \int_{\Ga[X]} \!\!\! \Q \,\nb_{\Ga[X]} \cdot \phin - \int_{\Ga[X]} \!\!\! \Q H \,\nu \cdot \phin,
		\\
		% auxiliary variable z
		\label{eq:weak form - z}
		& \int_{\Ga[X]} \!\!\! z \cdot \phiwn + \int_{\Ga[X]} \!\!\! \nb_{\Ga[X]} \n \cdot \nb_{\Ga[X]} \phiwn = \int_{\Ga[X]} \!\!\! |A|^2 \n \cdot \phiwn ,
	\end{align}
\end{subequations}
for all test functions $\phiv \in H^1(\Ga[X])^3$, $\phiH \in H^1(\Ga[X])$, $\phiw \in H^1(\Ga[X])$, and $\phin \in H^1(\Ga[X])^3$, $\phiwn \in H^1(\Ga[X])^3$. Here,
we use the Sobolev space 
$H^1(\Ga)=\{ u \in L^2(\Gamma)\,:\, \nb_\Gamma u \in L^2(\Gamma) \}$. The system \eqref{Xv-eqs}--\eqref{eq:weak form} is complemented with the initial data $X^0$, $\n^0$ and $H^0$. 

We note that the  last two terms on the right-hand side of \eqref{eq:weak form - nu} result from the integration by parts formula 
(cf.~\cite[Section~2.3]{DziukElliott_acta})
$$
\int_\Ga \nb_\Ga f \cdot \varphi = - \int_\Ga  f \,\nb_\Ga \cdot \varphi + \int_\Ga f\, H\n \cdot \varphi,
$$
which we use here with $f=\Q$. The terms in the second and third lines of \eqref{eq:weak form - nu} result from this
integration by parts (used with $\n\cdot\varphi$ and $A\nb_{\Ga} H$ in the roles of $f$ and $\varphi$, respectively) and the product rule: 
\begin{align*}
	&\int_{\Gamma}
	-  \nabla_{\Gamma}\cdot (A\nb_{\Ga} H) (\nu \cdot \varphi )
	=  \int_{\Gamma}
	  (A\nb_{\Ga} H) \cdot \nabla_{\Gamma}(\nu \cdot \varphi) 
	  - \int_\Ga (A\nb_{\Ga} H) \cdot (H\n) \, (\n\cdot\varphi)
	  \\
	 &\quad = \int_{\Gamma} 
	 (A\nb_{\Ga} H) \cdot (A\varphi) 
	 + 
	 \int_{\Gamma}  (A\nb_{\Ga} H) \cdot (\nabla_{\Gamma}\varphi\,\nu) - \int_\Ga (\nb_{\Ga} H) \cdot (HA\n) \, (\n\cdot\varphi) \\
	 &\quad = \int_{\Gamma} (A^2 \nb_{\Ga} H) \cdot\varphi + 
	 \int_{\Gamma}  (A\nb_{\Ga} H) \cdot (\nabla_{\Gamma}\varphi\,\nu),
 \end{align*}
where we used $\nabla_{\Gamma}\nu=A = A^T$ and $A\n=0$. 
%
%[kb: The non-linear terms start to get really long. Maybe we should introduce an abbreviated notation for the non-linear terms, e.g.~analogous to the ones in \eqref{eq:matrix--vector form}.] 
%
%
%The integration by parts formula used in \eqref{eq:weak form - nu} should contain an additional term:
%\begin{align*}
%&
%\int_{\Ga[X]} \!\!\! \nb_{\Ga[X]} \big(\tfrac12 H^3 - |A|^2 H\big) \cdot \phin \\ 
%&=
%- \int_{\Ga[X]} \!\!\! \big(\tfrac12 H^3 - |A|^2 H\big) (\nb_{\Ga[X]} \cdot \phin)
%+\int_{\Ga[X]} \!\!\! \big(\tfrac12 H^3 - |A|^2 H\big) H (\phin\cdot\nu ) . 
%\end{align*}
%This is just like \eqref{eq:integration by parts formula - tricky}.
%
%
%
%[kb: The comparisons with other algorithms are not that useful now as for mean curvature flow! Nevertheless, here they are.] 
%
%[kb: According to Dziuk the algorithm of Rusu \cite{Rusu} was the first parametric FEM based method. We could include that as well.] 

It is instructive to compare our system \eqref{Xv-eqs}--\eqref{eq:weak form}
with the equations on which the algorithms of Dziuk \cite{Dziuk_Willmore} and of Barrett, Garcke and N\"urnberg in \cite{BGN2007,BGN2008Willmore} are based. They both use the fact that $-H\nu =  \Delta_\Gamma x_\Gamma$, with $x_\Gamma$ denoting the identity map on $\Gamma$, and also employ the above integration by parts formula (for $f=1$).
%, cf.~\cite[equation~(2.2)]{DziukElliott_ESFEM}, 
%\begin{equation}
%\label{eq:integration by parts formula - tricky}
%	\int_\Ga \nbg \cdot \vphi = - \int_\Ga H\nu \cdot \vphi .
%\end{equation}

Dziuk's algorithm is based on the following weak formulation \cite[Problem~2]{Dziuk_Willmore}: Find $w \in H^1(\Ga[X])^3$ and the velocity $v \in H^1(\Ga[X])^3$ such that
\begin{equation}
\label{eq:Dziuk alg - weak form}
	\begin{aligned}
		\int_{\Ga[X]} \!\! w \cdot \vphi^w = &\ - \int_{\Ga[X]} \!\! \nbgx x_{\Ga} \cdot \nbgx \vphi^w , \\
		\int_{\Ga[X]} \!\! v \cdot \vphi^v = &\ \half \int_{\Ga[X]} \!\! |w|^2 \nbgx \cdot \vphi^v + \int_{\Ga[X]} \!\! \nbgx w \cdot \nbgx \vphi^v \\
	 + &\ \int_{\Ga[X]} \!\!\! (\nbgx \cdot w) (\nbgx \cdot \vphi^v) - \int_{\Ga[X]} \!\!\!\! D(\vphi^v) \nbgx x_{\Ga} \cdot  \nbgx w , \\
%		\pa_t X = &\ v ,
	\end{aligned}
\end{equation}
for all $\vphi^w \in H^1(\Ga[X])^3$ and $\vphi^v \in H^1(\Ga[X])^3$, and where $D(\vphi)$ denotes the symmetric tensor $D(\vphi) = \nbgx \vphi + (\nbgx \vphi)^T$. 
The above formulation is based on the idea that if $w=-H\n=\Delta_\Ga x_\Ga$, % is the $L^2$-projection of the first variation of the area, 
then the first variation of the Willmore functional (i.e.~the negative surface velocity $-v$) can be expressed using the second expression in \eqref{eq:Dziuk alg - weak form}, see \cite[Lemma~3]{Dziuk_Willmore}.

The algorithm of Barrett, Garcke and N\"urnberg \cite{BGN2008Willmore}
%\footnote{Again, note the opposite sign convention for the normal vector. 
%In (1.16) of \cite{BGN2008Willmore}, their $w$ and $\nu$ are our $-A$ and $-\nu$, respectively, but their $\kappa$ is our $H$. The current sign in \eqref{eq:BGN alg - weak form} should be correct. 
%} 
is based on the following weak formulation: Find the velocity $v \in H^1(\Ga[X])^3$ and $H \in H^1(\Ga[X])$, $A \in H^1(\Ga[X])^{3 \times 3}$ such that
\begin{equation}
\label{eq:BGN alg - weak form}
	\begin{aligned}
		\int_{\Ga[X]} \!\! v \cdot (\nu \vphi^v) = &\ -\int_{\Ga[X]} \!\! \nbgx H \cdot \nbgx \vphi^v  + \int_{\Ga[X]} \!\! Q \vphi^v , \\
		\int_{\Ga[X]} \!\! H \nu \cdot \varphi^H = &\ \int_{\Ga[X]} \!\! \nbgx x_\Ga \cdot \nbgx \vphi^H , \\
		\int_{\Ga[X]} \!\! A \cdot \vphi^A = &\ -  \int_{\Ga[X]} \!\! \nu \cdot (\nbgx \cdot \vphi^A) + \int_{\Ga[X]} \!\! H \nu \cdot (\vphi^A \nu) 
		,
		% , \\
%		\pa_t X = &\ v ,
	\end{aligned}
\end{equation}
for all $\vphi^v \in H^1(\Ga[X])^3$, $\vphi^H \in H^1(\Ga[X])$ and $\vphi^A \in H^1(\Ga[X])^{3 \times 3}$.
The first equation in the above weak formulation is obtained by taking the scalar product of \eqref{eq:Willmore-velocity} with $\nu$, the second equation is the weak form of the identity $-H\nu =  \Delta_\Gamma x_\Gamma$, while the last equation is obtained by choosing $\vphi = \vphi^A \nu$ in the integration by parts formula
%\eqref{eq:integration by parts formula - tricky} 
and using $A = \nbgx \nu$.

The key idea for all these approaches is to formulate Willmore flow using additional variables which satisfy differential equations, derived using geometric identities for surfaces or from the properties of the Willmore flow. 
The characteristic feature of our approach via the evolution equations of Lemma~\ref{lemma:evolution equations} is that it has only parabolic equations for the natural geometric variables that already appear in the velocity law for Willmore flow \eqref{eq:Willmore-velocity}. This will allow us to prove convergence, which is not known for the previously proposed algorithms.

\section{Evolving finite element semi-discretization}
\label{section:ESFEM}

\subsection{Evolving surface finite elements}
{\it (The text of this preparatory subsection is taken verbatim from \cite[Section~3.1]{MCF}.)}
We formulate the evolving surface finite element (ESFEM) discretization for the velocity law coupled with evolution equations on the evolving surface, following the description in \cite{KLLP2017,MCF}, which is based on \cite{Dziuk88} and \cite{Demlow2009}. We use triangular finite elements on the surface and continuous piecewise polynomial basis functions of degree~$k$, as defined in \cite[Section 2.5]{Demlow2009}.

We triangulate the given smooth initial surface $\Gamma^0$ by an admissible family of triangulations $\mathcal{T}_h$ of decreasing maximal element diameter $h$; see \cite{DziukElliott_ESFEM} for the notion of an admissible triangulation, which includes quasi-uniformity and shape regularity. 
For a given triangulation of the initial surface $\Gamma^0$, 
%For a momentarily fixed $h$, 
we denote by $\bfx^0 $  the vector in $\R^{3\dof}$ that collects all nodes $p_j$ $(j=1,\dots,\dof)$ of the initial triangulation. By piecewise polynomial interpolation of degree $k$, the nodal vector defines an approximate surface $\Gamma_h^0$ that interpolates $\Gamma^0$ in the nodes $p_j$. We will evolve the $j$th node in time, denoted $x_j(t)$ with initial condition $x_j(0)=p_j$, and collect the nodes at time $t$ in a column vector% in $\R^{3\dof}$,
$$
\bfx(t) \in \R^{3\dof}. %= (x_1(t),\dots,x_\dof(t)) ^T 
$$
We just write $\bfx$ for $\bfx(t)$ when the dependence on $t$ is not important.

By piecewise polynomial interpolation on the  plane reference triangle that corresponds to every
curved triangle of the triangulation, the nodal vector $\bfx$ defines a closed surface denoted by $\Gamma_h[\bfx]$. We can then define globally continuous finite element {\it basis functions}
$$
\phi_i[\bfx]:\Gamma_h[\bfx]\to\R, \qquad i=1,\dotsc,\dof,
$$
which have the property that on every triangle their pullback to the reference triangle are polynomials of degree $k$, and which satisfy at the node $x_j$
\begin{equation*}
	\phi_i[\bfx](x_j) = \delta_{ij} \quad  \text{ for all } i,j = 1,  \dotsc, \dof .
\end{equation*}
These functions span the finite element space on $\Gamma_h[\bfx]$,
\begin{equation*}
	S_h[\bfx] = S_h(\Gamma_h[\bfx])=\spn\big\{ \phi_1[\bfx], \phi_2[\bfx], \dotsc, \phi_\dof[\bfx] \big\} .
\end{equation*}
For a finite element function $u_h\in S_h[\bfx]$, the tangential gradient $\nabla_{\Gamma_h[\bfx]}u_h$ is defined piecewise on each element.

The discrete surface at time $t$ is parametrized by the initial discrete surface via the map $X_h(\cdot,t):\Gamma_h^0\to\Gamma_h[\bfx(t)]$ defined by
$$
	X_h(p_h,t) = \sum_{j=1}^\dof x_j(t) \, \phi_j[\bfx(0)](p_h), \qquad p_h \in \Gamma_h^0,
$$
which has the properties that $X_h(p_j,t)=x_j(t)$ for $j=1,\dots,\dof$, that  $X_h(p_h,0) = p_h$ for all $p_h\in\Gamma_h^0$, and
$$
	\Gamma_h[\bfx(t)]=\Gamma[X_h(\cdot,t)],
$$
where the right-hand side equals $\{ X_h(p_h,t) \,:\, p_h \in \Ga_h^0 \}$ like in Section~\ref{subsection: basic notions}.

The {\it discrete velocity} $v_h(x,t)\in\R^3$ at a point $x=X_h(p_h,t) \in \Gamma[X_h(\cdot,t)]$ is given by
$$
	\partial_t X_h(p_h,t) = v_h(X_h(p_h,t),t).
$$
In view of the transport property of the basis functions  \cite{DziukElliott_ESFEM},
$$
	\frac\d{\d t} \Bigl( \phi_j[\bfx(t)](X_h(p_h,t)) \Bigr) =0 ,
$$
%which by integration from $0$ to $t$ yields
%$$
%	\phi_j[\bfx(t)](X_h(p_h,t)) = \phi_j[\bfx(0)](p_h).
%$$
the discrete velocity equals, for $x \in \Gamma_h[\bfx(t)]$,
$$
	v_h(x,t) = \sum_{j=1}^\dof v_j(t) \, \phi_j[\bfx(t)](x) \qquad \hbox{with } \ v_j(t)=\dot x_j(t),
$$
where the dot denotes the time derivative $\d/\d t$. 
Hence, the discrete velocity $v_h(\cdot,t)$ is in the finite element space $S_h[\bfx(t)]$, with nodal vector $\bfv(t)=\dot\bfx(t)$.
%Although it would be possible to define the basis functions using the formula $\phi_j[\bfx\t](X_h(p_h,t)) = \phi_j[\bfx(0)](p_h)$ (obtained by integrating the transport property), such a formulation would involve pullbacks, which are avoided here.
%
%??? $X_h$ ist ja mittels der Basisfunktionen definiert. Ich wuerde das weglassen. ???
%

The {\it discrete material derivative} of a finite element function $u_h(x,t)$ with nodal values $u_j(t)$ is
$$
	\mat_h u_h(x,t) = \frac{\d}{\d t} u_h(X_h(p_h,t)) = \sum_{j=1}^\dof \dot u_j(t)  \phi_j[\bfx(t)](x)  \quad\text{at}\quad x=X_h(p_h,t).
$$

%[kb: Would be nice to shorten up a little.] 

%We will determine an approximate normal vector $\n_h$ and an approximate mean curvature $H_h$ as finite element functions: for $x \in \Gamma_h[\bfx(t)]$,
%\begin{align*}
%	\n_h(x,t) &= \sum_{j=1}^\dof \n_j(t) \, \phi_j[\bfx(t)](x) , \\
%	H_h(x,t) &= \sum_{j=1}^\dof H_j(t) \, \phi_j[\bfx(t)](x).
%\end{align*}
%Note that these finite element functions are {\it not} the normal vector and the mean curvature of the discrete surface $\Gamma_h[\bfx(t)]$. 
%For a curved triangle $E_h\subset\Gamma_h[\bfx]$, let $\varphi_{_{E_h}}: E\rightarrow E_h$ denote the one-to-one and onto map (a polynomial of degree $k$) defined on the reference triangle $\widetilde E$. 

\subsection{ESFEM spatial semi-discretization}
\label{subsection:semi-discretization}

A {\it preliminary} finite element spatial semi-discretization of the fourth-order parabolic system \eqref{eq:weak form} coupled with
the velocity and position equations \eqref{Xv-eqs}
reads as follows: Find the unknown nodal vector $\bfx(t)\in \R^{3\dof}$ of the finite element surface parametrization $X_h(\cdot,t)\in S_h[\bfx^0]^3$ and the unknown finite element velocity $v_h(\cdot,t)\in S_h[\bfx(t)]^3$, and the finite element functions $H_h(\cdot,t)\in S_h[\bfx(t)]$, $\V_h(\cdot,t)\in S_h[\bfx(t)]$, and $\n_h(\cdot,t)\in S_h[\bfx(t)]^3$, $z_h(\cdot,t)\in S_h[\bfx(t)]^3$ such that
\begin{subequations}
\label{eq:xh-vh}
\begin{align}
\label{eq:xh}
	\partial_t X_h(p_h,t) = v_h(X_h(p_h,t),t), \qquad p_h\in\Ga_h^0,
\end{align}
with
\begin{align}\label{eq:vh}
v_h = \widetilde I_h (V_h \n_h),
\end{align}
\end{subequations}
where $\widetilde I_h=\widetilde I_h[\bfx]:C(\Ga_h[\bfx])\to S_h(\Ga_h[\bfx])$ denotes the  finite element interpolation operator on the discrete surface $\Ga_h[\bfx]$.

The functions $H_h$, $V_h$, $\n_h$, $z_h$ are determined by the ESFEM semi-discretization of \eqref{eq:weak form},
denoting by $A_h = \half (\nb_{\Ga_h[\bfx]} \n_h + (\nb_{\Ga_h[\bfx]} \n_h)^T)$ the symmetric part of $\nb_{\Ga_h[\bfx]} \n_h$, 
%by $K_h = \half ( H_h^2 - |A_h|^2 )$ the approximate Gaussian curvature 
and by $\Qh = -\tfrac12 H_h^3 + |A_h|^2 H_h$ the cubic term, 
\begin{subequations}
\label{eq:semidiscrete weak form}
	\begin{align}
%		% velocity v_h
%		& \int_{\Ga_h[\bfx]} \!\!\!\! \nb_{\Ga_h[\bfx]} v_h \cdot  \nb_{\Ga_h[\bfx]} \phiv_h + 
%		\int_{\Ga_h[\bfx]} \!\!\!\! v_h \cdot \phiv_h 
%		\nonumber \\ &
%		= \int_{\Ga_h[\bfx]} \!\!\!\! \nb_{\Ga_h[\bfx]}( \V_h \n_h) \cdot \nb_{\Ga_h[\bfx]}\phiv_h + 
%		\int_{\Ga_h[\bfx]} \!\!\!\! \V_h \n_h \cdot \phiv_h , \quad \\[3mm]
		% mean curvature H_h
		&	 \int_{\Ga_h[\bfx]} \!\!\!\! \mat_h H_h \, \phiH_h - \int_{\Ga_h[\bfx]} \!\!\!\! \nb_{\Ga_h[\bfx]} \V_h \cdot  \nb_{\Ga_h[\bfx]} \phiH_h  = - \int_{\Ga_h[\bfx]} \!\!\!\! | A_h |^2 \,  \V_h \, \phiH_h , \\[8pt]
		% normal velocity V_h
		\label{Vh-prelim}
		& \int_{\Ga_h[\bfx]} \!\!\!\! \V_h \phiw_h + \int_{\Ga_h[\bfx]} \!\!\!\! \nb_{\Ga_h[\bfx]} H_h \cdot \nb_{\Ga_h[\bfx]} \phiw_h =  \int_{\Ga_h[\bfx]} \!\!\! \Qh \phiw_h ;
		\\[16pt]
		% normal vector \nu_h 
		\label{nuh-prelim}
		&	 \int_{\Ga_h[\bfx]} \!\!\!\! \mat_h \n_h \cdot \phin_h 
		- \int_{\Ga_h[\bfx]} \!\!\!\!  \nb_{\Ga_h[\bfx]} z_h \cdot \nb_{\Ga_h[\bfx]} \phin_h  
		%\nonumber 
%		\\
%		&\hspace{72pt} 
		 = \int_{\Ga_h[\bfx]} \!  (H_h A_h - A_h^2) z_h \cdot \phin_h 
\nonumber 
		\\
				&\hspace{72pt}  		
		+  \int_{\Ga_h[\bfx]} \big( |\nb_{\Ga_h[\bfx]} H_h|^2 \nu_h + A_h^2 \nb_{\Ga_h[\bfx]} H_h \big) \cdot \phin_h 
		\nonumber \\
		& \hspace{72pt}  		
		+ 2 \int_{\Ga_h[\bfx]} \!\!\! (A_h\nb_{\Ga_h[\bfx]} H_h) \cdot (\nb_{\Ga_h[\bfx]} \phin_h\, \n_h ) 
		\nonumber \\
		&\hspace{72pt}  
		+ \int_{\Ga_h[\bfx]} \!\!\!  \Qh \,\nb_{\Ga_h[\bfx]} \cdot \phin_h - \int_{\Ga_h[\bfx]} \!\!\! \Qh H_h \, \nu_h\cdot \phin_h, \\[8pt]
%		 \nonumber \\
%		&\hspace{72pt} 
		% auxiliary variable z_h
		\label{zh-prelim}
		& \int_{\Ga_h[\bfx]} \!\!\!\! z_h \cdot \phiwn_h + \int_{\Ga_h[\bfx]} \!\!\!\! \nb_{\Ga_h[\bfx]} \n_h \cdot \nb_{\Ga_h[\bfx]} \phiwn_h = \int_{\Ga_h[\bfx]} \!\!\!\! | A_h |^2 \n_h \cdot \phiwn_h ,
	\end{align}
\end{subequations}
for all $\phiv_h\in S_h[\bfx(t)]^3$, $\phiH_h \in S_h[\bfx(t)]$, $\phiw_h \in S_h[\bfx(t)]$, $\phin_h \in S_h[\bfx(t)]^3$, and $\phiwn_h \in S_h[\bfx(t)]^3$. 

The initial values for the nodal vector $\bfx$ are taken as the positions of the nodes of the triangulation of the given initial surface $\Gamma^0$.
The initial data for $H_h$ and $\n_h$ are determined by Lagrange interpolation of $H^0$ and $\n^0$, respectively. 

We note that the finite element functions $\n_h$ and $H_h$ are {\it not} the normal vector and mean curvature of the discrete surface $\Ga_h[\bfx]$.

Our error analysis indicates that the above semi-discretization is not yet convergent of optimal order. To achieve this, we need to add correction terms to \eqref{Vh-prelim} and \eqref{zh-prelim}. These will be constructed in Section~\ref{subsection:semi-discretization-modified}, using the matrix--vector formulation of the next subsection.

\begin{remark} In the evolving finite element algorithm for mean curvature flow of \cite{MCF}, the velocity approximation $v_h$ was chosen as the Ritz projection of $V_h\n_h$ (with $V_h=-H_h$ for mean curvature flow), as opposed to the simpler nodal interpolation of $V_h\n_h$ proposed here. Meanwhile, we have realized that the same $H^1$ error bounds as in \cite{MCF} can be proved for mean curvature flow also with interpolation instead of the Ritz projection. Here we use interpolation for the velocity approximation from the outset, and in our proof of stability we will provide the argument that could have been used for interpolation also in the case of mean curvature flow.
\end{remark}

\subsection{Matrix--vector formulation}
\label{subsection:DAE}

We collect the nodal values in column vectors  $\bfv=(v_j) \in \R^{3N}$, $\bfH=(H_j)\in\R^\dof$, $\bfn=(\n_j) \in \R^{3N}$ and $\boldsymbol{\V} = ({\V}_j) \in \R^\dof$, $\bfz = (z_j) \in \R^{3N}$, and denote 
\begin{equation*}
	\bfu=\left(\begin{array}{c}
	\bfH\\ 
	\bfn
	\end{array}\right) \in \R^{4N} \andquad
	\bfw=\left(\begin{array}{c}
	\boldsymbol{\V}\\ 
	\bfz
	\end{array}\right) \in \R^{4N} .
\end{equation*}

We define the surface-dependent mass matrix $\bfM(\bfx) \in \R^{N \times N}$ and stiffness matrix $\bfA(\bfx) \in \R^{N \times N}$ %(cf.~\cite[Section~2.5]{KLLP2017}) 
on the surface determined by the nodal vector $\bfx$:
\begin{equation*}
	\begin{aligned}
		\bfM(\bfx)\vert_{ij} =&\ \int_{\Ga_h[\bfx]} \! \phi_i[\bfx] \phi_j[\bfx] , \\
		\bfA(\bfx)\vert_{ij} =&\ \int_{\Ga_h[\bfx]} \! \nb_{\Ga_h[\bfx]} \phi_i[\bfx] \cdot \nb_{\Ga_h[\bfx]} \phi_j[\bfx] , 
	\end{aligned}
	\qquad i,j = 1,  \dotsc,\dof ,
\end{equation*}
with the finite element nodal basis functions $\phi_j[\bfx] \in S_h[\bfx]$.
We further let, for $d=3$ or $4$ (with the identity matrices $I_d \in \R^{d \times d}$) 
$$
	\bbM^{[d]}(\bfx)= I_d \otimes \bfM(\bfx), \quad
	\bbA^{[d]}(\bfx)= I_d \otimes \bfA(\bfx).
%	\quad
%	\bfK^{[d]}(\bfx) = I_d \otimes \bigl( \bfM(\bfx) + \bfA(\bfx) \bigr).
$$
When no confusion can arise, we write $\bfM(\bfx)$ for $\bfM^{[d]}(\bfx)$ and $\bfA(\bfx)$ for $\bfA^{[d]}(\bfx)$.

We recall that we denote by $|A_h|^2$ the squared Frobenius norm of $A_h = \tfrac12 \big(\nb_{\Ga_h[\bfx]} \n_h + (\nb_{\Ga_h[\bfx]} \n_h)^T\big)$. We define the surface- and $| A_h |^2$-dependent matrix $\bfF_1(\bfx,\bfu) \in \R^{N \times N}$ by
\begin{subequations}
\label{eq:alpha mass matrix - F}
\begin{equation}
	\label{eq:alpha mass matrix - F - A_h}
	\bfF_1(\bfx,\bfu)\vert_{ij} = - \int_{\Ga_h[\bfx]} \! | A_h |^2 \,\phi_i[\bfx] \phi_j[\bfx] , \\
	\qquad i,j = 1,  \dotsc,\dof ,
\end{equation}
and similarly   the matrix $\bfF_2(\bfx,\bfu) \in \R^{3N \times 3N}$ by 
\begin{equation}
\label{eq:alpha mass matrix - F - K_h}
	\bfF_2(\bfx,\bfu)\vert_{ij} = \int_{\Ga_h[\bfx]} \! (H_h A_h - A_h^2) \,\phi_i[\bfx] \cdot \phi_j[\bfx] , \\
	\qquad i,j = 1, \dotsc,3\dof .
\end{equation}
\end{subequations} 
We then define the block diagonal matrix
\begin{equation*}
	\bfF(\bfx,\bfu) = \left(\begin{array}{cc}
	\bfF_1(\bfx,\bfu) & 0\\ 
	0 &   \bfF_2(\bfx,\bfu) 
	\end{array}\right) \in \R^{4N \times 4N} .
\end{equation*}
We define the non-linear functions $\bff(\bfx,\bfu), \bfg(\bfx,\bfu) \in\R^{4N}$ by
\begin{equation*}
	\bff(\bfx,\bfu) =
	\left(\begin{array}{c}
	0  \\ 
	\bff_{2}(\bfx,\bfu)  
	\end{array}\right) \andquad
	\bfg(\bfx,\bfu) =
	\left(\begin{array}{c}
	\bfg_{1}(\bfx,\bfu) \\ 
	\bfg_{2}(\bfx,\bfu)  
	\end{array}\right) ,
\end{equation*}
with $\bff_{2}(\bfx,\bfu)\in \R^{3N}$, $\bfg_{1}(\bfx,\bfu)\in \R^{N}$ and  $\bfg_{2}(\bfx,\bfu)\in \R^{3N}$, given by  
\begin{equation*}
	\begin{aligned}
		\bff_{2}(\bfx,\bfu)\vert_{j+(\ell - 1)N} \: &= 
		 \int_{\Ga_h[\bfx]} \big( |\nb_{\Ga_h[\bfx]} H_h|^2 \nu_h + A_h^2 \nb_{\Ga_h[\bfx]} H_h \big)_\ell \, \phi_j[\bfx] 
		\nonumber \\
		& \hspace{12pt}  		
	+ 2 \int_{\Ga_h[\bfx]} \!\!\! (A_h\nb_{\Ga_h[\bfx]} H_h) \cdot (\nb_{\Ga_h[\bfx]} \phi_j[\bfx]\, (\n_h )_\ell )
		\\
		& \hspace{12pt} 
		 +\int_{\Gamma_h[\bfx]} \!\! \Qh \, (\nb_{\Ga_h[\bfx]})_\ell \phi_j[\bfx] 
%		\\
%		 &\quad\,
		-\int_{\Ga_h[\bfx]} \!\!\! \Qh H_h \, (\nu_h)_\ell \, \phi_j[\bfx] , \\
		\bfg_{1}(\bfx,\bfu)\vert_j \qquad\quad\ &=  \int_{\Gamma_h[\bfx]}\!\! \Qh \, \phi_j[\bfx] ,
		\\
		\bfg_{2}(\bfx,\bfu)\vert_{j+(\ell-1)N} &=\int_{\Gamma_h[\bfx]}\!\! | A_h |^2 \,(\n_h)_\ell \,  \, \phi_j[\bfx] ,
		\\
%		\bfg(\bfx,\bfu,\bfw)\vert_{j+(\ell-1)N} &= -\int_{\Gamma_h[\bfx]} \!\!\!\!\!   \V_h  (\n_h  )_\ell \, \phi_j[\bfx]
%		-\int_{\Gamma_h[\bfx]} \!\!\!\!\!  \nb_{\Ga_h[\bfx]} (\V_h  (\n_h  )_\ell)\cdot  \nb_{\Ga_h[\bfx]}\phi_j[\bfx] ,
	\end{aligned}
%	\qquad (j = 1,  \dotsc, N,\ \ell=1,2,3) .
\end{equation*}
for $j = 1, \dotsc, N$ and $\ell=1,2,3$.

%, and $\bfK(\bfx)$ for $\bfK^{[d]}(\bfx)$.
%, and
%$\| \cdot \|_{H^1(\Gamma)}$ for $\| \cdot \|_{H^1(\Gamma)^4}$, etc.

The position and velocity equations \eqref{eq:xh-vh} are equivalent to 
\begin{subequations}
\label{eq:matrix-form-X-v}
\begin{align}
\label{eq:matrix-form-X}
	\dot\bfx &= \bfv,
	\\
\label{eq:matrix-form-v}
	\bfv &= \bfV \bullet \bfn,
\end{align}
\end{subequations}
where $\bullet$ denotes the componentwise product of vectors, $\bfV \bullet \bfn = (V_j\n_j)$.
With the notation introduced above, the equations \eqref{eq:semidiscrete weak form} can be written in the following matrix--vector form, where we recall that $\bfu = (\bfH;\bfn)$ and $\bfw = (\bfV;\bfz)$: 
\begin{subequations}
\label{eq:matrix--vector form}
	\begin{align}
%		\label{eq:matrix-form-v}
%		\bfK^{[3]}(\bfx)\bfv = &\ \bfg(\bfx,\bfu,\bfw) , 
%		\\
		\label{eq:matrix-form-u}
		\bfM^{[4]}(\bfx)\dot\bfu - \bfA^{[4]}(\bfx)\bfw = &\ \bfF(\bfx,\bfu)\bfw + \bff(\bfx,\bfu) , \\
		\label{eq:matrix-form-w}
		\bfM^{[4]}(\bfx)\bfw + \bfA^{[4]}(\bfx)\bfu = &\ \bfg(\bfx,\bfu) .
	\end{align}
\end{subequations}

\subsection{Modified ESFEM spatial semi-discretization}
\label{subsection:semi-discretization-modified}
The initial values of $V_h$ and $z_h$ in \eqref{Vh-prelim} and \eqref{zh-prelim}, or equivalently of $\bfw$ in the algebraic equations \eqref{eq:matrix-form-w}, are not at our disposal, but are determined by the equations. On the other hand, our error analysis yields convergence in the $H^1$-norm of optimal order $k$ only if these initial values are $O(h^k)$ close to the Ritz projection of the exact initial values in the $H^1$-norm. We can verify this only in the weaker $H^{-1}$-norm but not in the $H^1$-norm. For the method as stated, we can then show convergence in the $H^1$-norm only of reduced order $k-2$ (and only for polynomial degree $k\ge 4$). 

To obtain the full order $k$ 
(for polynomial degree $k\ge 2$), we modify equation \eqref{eq:matrix-form-w} by adding a time-independent correction term such that the initial value of the nodal vector $\bfw$ equals the nodal vector $\bar \bfw^*(0)$ of exact initial values: Let $\bar\bfw(0)$ be the vector obtained from \eqref{eq:matrix-form-w} at time $t=0$, and set
\begin{equation}
\label{vartheta}
	\bfvartheta =   \bfM^{[4]}(\bfx^0)( \bar \bfw^*(0)- \bar\bfw(0) ).
\end{equation}
We then modify \eqref{eq:matrix-form-w} to
\begin{equation}
\label{eq:matrix-form-w-mod}
		\bfM^{[4]}(\bfx)\bfw + \bfA^{[4]}(\bfx)\bfu = \bfg(\bfx,\bfu) + \bfvartheta,
\end{equation}
so that the initial value becomes $\bfw(0)=\bar \bfw^*(0)$. Note that the time-differenti\-ated equation \eqref{eq:matrix-form-w-mod} is still the same as the time-differentiated equation \eqref{eq:matrix-form-w}, since $\bfvartheta$ does not depend on time.

As an aside,
to interpret this modification in the weak form \eqref{eq:semidiscrete weak form} (which is not necessary for the algorithm or its error analysis), we introduce
$$
\bfeta(t) = (\eta_j(t))= \bfM^{[4]}(\bfx(t))^{-1}\bfvartheta
$$
with $\eta_j=(\eta^V_j;\eta^z_j)\in \R^{1+3}$ and define the finite element functions
\begin{align*}
\eta_h^V(x,t) &= \sum_{j=1}^N \eta_j^V(t) \varphi_j [\bfx(t)](x) \qquad\ \text{ for } \quad x \in \Gamma_h[\bfx(t)], 
\\
\eta_h^z(x,t) &= \sum_{j=1}^N \eta_j^z(t) \,\varphi_j [\bfx(t)](x) \qquad\ \text{ for } \quad x \in \Gamma_h[\bfx(t)]. 
\end{align*}
Then, the modification \eqref{eq:matrix-form-w-mod} corresponds to modifying 
\eqref{Vh-prelim} and \eqref{zh-prelim} to
\begin{subequations}
\label{eq:semidiscrete weak form - mod}
	\begin{align}
		% normal velocity V_h
		\label{Vh-prelim-mod}
		 \int_{\Ga_h[\bfx]} \!\!\!\! \V_h \phiw_h + \int_{\Ga_h[\bfx]} \!\!\!\! \nb_{\Ga_h[\bfx]} H_h \cdot \nb_{\Ga_h[\bfx]} \phiw_h = &  \int_{\Ga_h[\bfx]} \!\!\! \Qh \phiw_h 
%		 \nonumber
%		\\
%		& 
		+ \ \int_{\Ga_h[\bfx]} \!\!\! \eta_h^V \phiw_h ,
		\\[2mm]
		% auxiliary variable z_h
		\label{zh-prelim-mod}
		 \int_{\Ga_h[\bfx]} \!\!\!\! z_h \cdot \phiwn_h + \int_{\Ga_h[\bfx]} \!\!\!\! \nb_{\Ga_h[\bfx]} \n_h \cdot \nb_{\Ga_h[\bfx]} \phiwn_h = & \int_{\Ga_h[\bfx]} \!\!\!\! | A_h |^2 \n_h \cdot \phiwn_h 
%		 \nonumber
%				\\
%						& 
						+ \ \int_{\Ga_h[\bfx]} \!\!\! \eta_h^z \,\phiwn_h .
	\end{align}
\end{subequations}

\subsection{Lifts}
\label{subsec:lifts}
 
{\it (The text of this preparatory subsection is taken verbatim from \cite[Section~3.4]{MCF}.)} 
We need to compare functions on the {\it exact surface} $\Gamma(t)=\Gamma[X(\cdot,t)]$ with functions on the {\it discrete surface} $\Gamma_h(t)=\Gamma_h[\bfx(t)]$. To this end, we further work with functions on the {\it interpolated surface} $\Gamma_h^*(t)=\Gamma_h[\xs(t)]$, where
$\xs(t)$ denotes the nodal vector collecting the grid points $x_j^*(t)=X(p_j,t)$ on the exact surface.

Any finite element function $w_h:\Gamma_h(t)\to\R^m$ ($m=1$ or 3) on the discrete surface, with nodal values $w_j$, is associated with a finite element function $\widehat w_h$ on the interpolated surface $\Gamma_h^*(t)$ that is defined by 
$$
\widehat w_h  = \sum_{j=1}^N w_j \phi_j[\xs(t)].
$$
This can be further lifted to a function on the exact surface by using the \emph{lift operator} $l$, which was introduced for linear and higher-order surface approximations in \cite{Dziuk88} and \cite{Demlow2009}, respectively. 
The lift operator $l$ maps a function on the interpolated surface $\Gamma_h^*$ to a function on the exact surface $\Gamma$, provided that $\Gamma_h^*$ is sufficiently close to $\Gamma$.
The exact regular surface $\Gamma$ can be represented,  in some neighbourhood of the surface,  by a smooth signed distance function $d : \R^3 \times [0,T] \to \R$, cf.~\cite[Section~2.1]{DziukElliott_ESFEM}, such that $\Gamma$ is the zero level set of~$d$  (i.e., $x\in \Gamma$ if and only if $d(x)=0$), with negative values of $d$ in the interior.  
Using this distance function,  the lift of a continuous function $\eta_h \colon \Ga_h^* \to \R^m$ is defined as
$%\begin{equation*}
    \eta_{h}^{l}(y) := \eta_h(x)$ for $x\in\Ga_h^*$,
%\end{equation*}
where for every $x\in \Ga_h^*$ the point $y=y(x)\in\Ga$ is uniquely defined via
$%\begin{equation*}
%\label{eq: lift defining equation}
    y = x - \nu(y) d(x).
$%\end{equation*}
 
The composed lift $L$ from finite element functions on the discrete surface $\Gamma_h(t)$ to functions on the exact surface $\Gamma(t)$ via the interpolated surface $\Gamma_h^*(t)$ is denoted by 
$$
w_h^L = (\widehat w_h)^l.
$$

\section{Convergence of the modified ESFEM semi-discretization}
\label{section: main result}

We are now in the position to formulate the first main result of this paper, which yields optimal-order error bounds for the finite element semi-discretization \eqref{eq:xh-vh}--\eqref{eq:semidiscrete weak form} with the modification \eqref{eq:semidiscrete weak form - mod}
using finite elements of polynomial degree $k \geq 2$. We denote by $\Gamma(t)=\Gamma[X(\cdot,t)]$ the exact surface and by $\Gamma_h(t)=\Gamma[X_h(\cdot,t)]=\Gamma_h[\bfx(t)]$ the discrete surface at time $t$, and introduce the notation
$$
	x_h^L(x,t) =  X_h^L(p,t) \in \Gamma_h(t) \qquad\hbox{for}\quad x=X(p,t)\in\Gamma(t).
$$

\begin{theorem}
\label{MainTHM} 
	Consider the ESFEM semi-discretization \eqref{eq:xh-vh}--\eqref{eq:semidiscrete weak form} with the modification \eqref{eq:semidiscrete weak form - mod}
(i.e., \eqref{eq:matrix-form-X-v}--\eqref{eq:matrix--vector form} with the modification  \eqref{eq:matrix-form-w-mod} in matrix--vector form) of the Willmore flow problem \eqref{Xv-eqs}--\eqref{eq:weak form}, using evolving surface finite elements of polynomial degree $k \geq 2$. 
	Suppose that the problem admits an exact solution $(X,v,\n,H, z,V)$ that is sufficiently differentiable on the time interval $t\in[0,T]$, and that the flow map $X(\cdot,t):\Gamma^0\to \Gamma(t)\subset\R^3$ is non-degenerate so that $\Gamma(t)$ is a regular surface on the time interval $t\in[0,T]$. 
	
	Then, there exists a constant $h_0 > 0$ such that for all mesh sizes $h \leq h_0$ the following error bounds for the lifts of the discrete position, velocity, normal vector, mean curvature and further geometric quantities hold over the exact surface $\Ga(t)=\Ga[X(\cdot,t)]$ for $0 \leq t \leq T$:
	\begin{align*}
		\|x_h^L(\cdot,t) - \mathrm{id}_{\Gamma(t)}\|_{H^1(\Ga(t))^3} \leq &\ Ch^k, \qquad
		\|v_h^L(\cdot,t) - v(\cdot,t)\|_{H^1(\Ga(t))^3} \leq  C h^k, \\
		\|H_h^L(\cdot,t) - H(\cdot,t)\|_{H^1(\Ga(t))} \leq &\ C h^k, \qquad
	        \|\n_h^L(\cdot,t) - \n(\cdot,t)\|_{H^1(\Ga(t))^3} \leq C h^k, \\
	         \|V_h^L(\cdot,t) - V(\cdot,t)\|_{H^1(\Ga(t))} \leq &\ C h^k, \qquad
	        \|z_h^L(\cdot,t) - \nabla_{\Ga(t)}H(\cdot,t)\|_{H^1(\Ga(t))^3} \leq C h^k, \\
		\intertext{and also}
		\|X_h^l(\cdot,t) - X(\cdot,t)\|_{H^1(\Ga^0)^3} \leq &\ Ch^k ,
	\end{align*}
	where the constant $C$ is independent of $h$ and $t$, but depends on bounds of higher derivatives of the solution $(X,v,\n,H,z,V)$ of Willmore flow and on the length $T$ of the time interval.
\end{theorem}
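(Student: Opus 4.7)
I would follow the stability-plus-consistency paradigm emphasized in the introduction. Let $\xs(t) = (X(p_j,t))_j$ denote the nodal vector of exact points, and define reference nodal vectors $\us(t)=(\bfH^*;\bfn^*)(t)$ and $\ws(t)=(\bfV^*;\bfz^*)(t)$ by appropriate projections of the exact solution onto $S_h[\xs(t)]$: the standard Ritz projection for $H$ and $\nu$, and a \emph{modified Ritz projection} for $V$ and $z$ that accommodates the non-symmetric divergence-form terms on the right-hand side of \eqref{eq:weak form - nu}, in the spirit of \cite{LubichMansour_wave}. Inserting $(\xs,\us,\ws)$ into the scheme \eqref{eq:matrix-form-X-v}, \eqref{eq:matrix-form-u} and \eqref{eq:matrix-form-w-mod} produces defect vectors $\dx,\du,\dw$, and the errors $\ex=\bfx-\xs$, $\eu=\bfu-\us$, $\ew=\bfw-\ws$ satisfy a perturbed differential--algebraic system of the same structure. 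The theorem then reduces to two separate tasks: a \emph{stability} estimate bounding the errors by the defects and initial errors, and \emph{consistency} estimates for the defects themselves.

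\textbf{Stability.} The core task is to show that, under a bootstrap assumption $\|\eu\|_{W^{1,\infty}}+\|\ew\|_{W^{1,\infty}}+\|\ex\|_{W^{1,\infty}}\le h^{1/2}$ on some subinterval $[0,t^*]$, the errors are controlled in suitable $H^1$-type norms (induced by $\M(\xs),\A(\xs)$) by a constant times the defects and their time derivatives, plus the initial errors. I would work entirely in the matrix--vector formulation and carry out \emph{multiple energy estimates} that exploit the \emph{anti-symmetric} coupling between $-\A^{[4]}\ew$ in the $\bfu$-equation and $+\A^{[4]}\eu$ in the $\bfw$-equation: appropriately combined test functions generate an $H^1$-energy for $\eu$ together with a coercive dissipation term in $\ew$, while a second round of testing with time derivatives provides the bound on $\dot\eu$ needed to absorb the quadratic and cubic nonlinearities in $\bfF(\bfx,\bfu)\bfw$, $\bff(\bfx,\bfu)$ and $\bfg(\bfx,\bfu)$. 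The differences between the mass and stiffness matrices evaluated at $\bfx$ versus $\xs$, and between the nonlinearities evaluated at $(\bfx,\bfu)$ versus $(\xs,\us)$, are estimated by the surface-comparison lemmas of \cite[Section~4]{KLLP2017} and \cite[Section~7.1]{MCF}. The velocity equation \eqref{eq:matrix-form-v} is treated separately by an $H^1$-stability estimate for the nodal interpolation of the product $\V_h\n_h$, which holds under the $W^{1,\infty}$ control of the factors; the position equation \eqref{eq:matrix-form-X} is handled by a direct Lipschitz argument. A Gr\"onwall argument then closes the stability estimate.

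\textbf{Consistency and initial data.} The defect estimates $\|\dx\|+\|\du\|_*+\|\dw\|_*+\|\dot\dx\|+\|\dot\du\|_*+\|\dot\dw\|_*\le Ch^k$ (in the appropriate discrete dual norms) follow from interpolation and modified Ritz-map error bounds on evolving surfaces, combined with the geometric error estimates of \cite{Demlow2009,Dziuk88,DziukElliott_ESFEM,DziukElliott_L2,Kovacs2017}, essentially transcribing the arguments of \cite[Section~8]{KLLP2017} and \cite[Section~9]{MCF}; the only genuinely new technical points are the explicit quadratic and cubic right-hand sides identified in Lemma~\ref{lemma:evolution equations} and the modified Ritz projection needed for the divergence term in \eqref{eq:weak form - nu}. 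For the initial values, the nodal interpolants of $\bfx(0),\bfH(0),\bfn(0)$ are $O(h^k)$ in $H^1$ by the approximation properties of $S_h$, while the correction $\bfvartheta$ in \eqref{vartheta}--\eqref{eq:matrix-form-w-mod} is constructed precisely so that $\ew(0)=0$; without this correction only $\|\ew(0)\|_{H^{-1}}=O(h^k)$ can be established, which would yield the reduced order $k-2$ mentioned in Section~\ref{subsection:semi-discretization-modified}.

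\textbf{Closure and main obstacle.} The principal difficulty is that the stability argument presupposes the $W^{1,\infty}$ smallness bootstrap used to linearize the nonlinearities. Combining the stability and consistency estimates through Gr\"onwall and applying the inverse estimate $\|\cdot\|_{W^{1,\infty}}\le Ch^{-1}\|\cdot\|_{H^1}$ on the shape-regular quasi-uniform meshes underlying $S_h$ gives $\|\eu\|_{W^{1,\infty}}+\|\ew\|_{W^{1,\infty}}+\|\ex\|_{W^{1,\infty}}\le Ch^{k-1}$; for $k\ge 2$ and $h\le h_0$ sufficiently small this stays strictly below the bootstrap threshold $h^{1/2}$, so the hypothesis self-perpetuates and the argument extends from $[0,t^*]$ to all of $[0,T]$. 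Finally, translating the resulting matrix--vector bounds into finite-element norms on $\Gamma_h^*(t)$ via the norm equivalences of \cite{KLLP2017,MCF} and lifting to $\Gamma(t)$ through the composed lift of Section~\ref{subsec:lifts} yields the stated $H^1$-error bounds for $x_h^L,v_h^L,H_h^L,\n_h^L,V_h^L,z_h^L$ on $\Gamma(t)$ and for $X_h^l$ on $\Gamma^0$.
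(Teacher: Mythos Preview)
Your proposal follows essentially the same approach as the paper: stability via multiple energy estimates exploiting the anti-symmetric structure of the error equations in matrix--vector form, combined with consistency estimates using Ritz-type projections and the geometric lemmas of \cite{KLLP2017,MCF}, closed by the inverse-inequality bootstrap.

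Two small inaccuracies are worth flagging. First, only $z_h^*$ requires the modified Ritz map; $V_h^*$ (like $H_h^*$ and $\nu_h^*$) is defined via the standard Ritz map, since the divergence-of-test-function terms occur only in the $\nu$-equation \eqref{eq:weak form - nu}. Second, the correction $\bfvartheta$ does \emph{not} make $\ew(0)=0$: it sets $\bfw(0)$ equal to the nodal interpolant $\bar\bfw^*(0)$ of the exact $w(\cdot,0)$, whereas $\ws(0)$ is the Ritz projection, so $\ew(0)=\bar\bfw^*(0)-\ws(0)$ is the (nonzero) interpolation-minus-Ritz error, which is $O(h^k)$ in $H^1$. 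The stability estimate additionally requires the separate bound $\|\bar\bfe_\bfw(0)\|_{\star,\bfx^0}=\|\bar\bfw^*(0)-\bar\bfw(0)\|_{H_h^{-1}}\le Ch^k$, which is established by a dedicated consistency argument. Neither point undermines your strategy.
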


Sufficient regularity assumptions are the following: with bounds that are uniform in $t\in[0,T]$, we assume $X(\cdot,t) \in  H^{k+1}(\Ga^0)$,
$v(\cdot,t) \in H^{k+1}(\Ga(X(\cdot,t)))^3$, and for $u=(\nu,H)$ we have $\ u(\cdot,t), \mat u(\cdot,t), \pa^{(2)} u \in W^{k+1,\infty}(\Ga(X(\cdot,t)))^4$, for $w=(V,z)$ we have $w, \mat w \in W^{k+1,\infty}(\Ga(X(\cdot,t)))^4$.

We note that the remarks made after the convergence result in \cite{MCF} apply also here, in particular on the preservation of admissibility of the triangulation over the whole time interval $[0,T]$ for sufficiently small $h$.

\section{Stability}
\label{section:stability}

\subsection{Preparation: Estimates relating different finite element surfaces}
\label{section:aux}

In our previous work \cite[Section~4]{KLLP2017} and \cite[Section~7.1]{MCF} we proved technical results relating different finite element surfaces, which we recapitulate here, {\it taking verbatim the text of \cite[Section~7.1]{MCF} in this preparatory subsection.}
%We use the following setting.

The  finite element matrices of Section~\ref{subsection:DAE} induce discrete versions of Sobolev norms. Let $\bfx \in \R^{3 N}$ be a nodal vector defining the discrete surface $\Gamma_h[\bfx]$. For any nodal vector $\bfw=(w_j) \in \R^{N}$, with the corresponding finite element function $w_h= \sum_{j=1}^\dof w_j \phi_j[\bfx] \in S_h[\bfx]$, we define the following norms, where
$ \bfK(\bfx) = \bfM(\bfx) + \bfA(\bfx)$ in the third line:
\begin{align} 
\label{M-L2}
	&  \|\bfw\|_{\bfM(\bfx)}^{2} = \bfw^T \bfM(\bfx) \bfw = \|w_h\|_{L^2(\Ga_h[\bfx])}^2 , \\
\label{A-H1}
	&  \|\bfw\|_{\bfA(\bfx)}^{2} = \bfw^T \bfA(\bfx) \bfw = \|\nb_{\Ga_h[\bfx]} 	w_h\|_{L^2(\Ga_h[\bfx])}^2 , \\
\label{K-H1}
	&  \|\bfw\|_{\bfK(\bfx)}^{2} = \bfw^T \bfK(\bfx) \bfw = \|w_h\|_{H^1(\Ga_h[\bfx])}^2 .
\end{align}
When $\bfw\in \R^{dN}$, so that the corresponding finite element function $w_h$ maps into $\R^d$, we write  in the following $\| w_h \|_{L^2(\Gamma)}$ for $\| w_h \|_{L^2(\Gamma)^d}$ and $\| w_h \|_{H^1(\Gamma)}$ for $\| w_h \|_{H^1(\Gamma)^d}$.

Let now $\bfx,\bfy \in \R^{3 N}$ be two nodal vectors defining discrete surfaces $\Gamma_h[\bfx]$ and $\Gamma_h[\bfy]$, respectively. 
We  denote the difference by $\bfe= (e_j)=\bfx-\bfy \in \R^{  3  N}$. For  $\theta\in[0,1]$, we consider the intermediate surface $\Gamma_h^\theta=\Gamma_h[\bfy+\theta\bfe]$ and the corresponding finite element functions given as
$$
	e_h^\theta=\sum_{j=1}^\dof e_j \phi_j[\bfy+\theta\bfe] ,
$$
and in the same way, for any vectors $\bfw,\bfz \in \R^\dof$,
$$
	w_h^\theta=\sum_{j=1}^\dof w_j \phi_j[\bfy+\theta\bfe] \andquad z_h^\theta=\sum_{j=1}^\dof z_j \phi_j[\bfy+\theta\bfe] .
$$
Figure~\ref{figure:relating different surfaces} illustrates the described construction.

\begin{figure}[htbp]
	\begin{center}
		\includegraphics[scale=1]{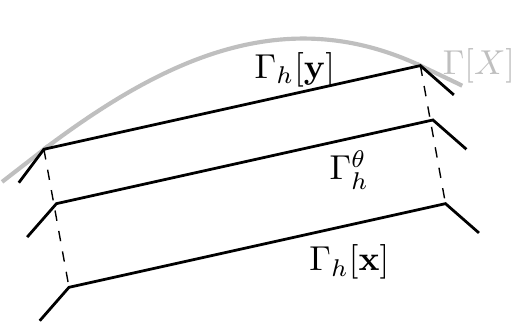}
		\caption{The construction of the intermediate surfaces $\Gamma_h^\theta$}
		\label{figure:relating different surfaces}
	\end{center}
\end{figure}

The following formulae relate the mass and stiffness matrices for the discrete surfaces $\Gamma_h[\bfx]$ and $\Gamma_h[\bfy]$. They result from the Leibniz rule and are given in Lemma 4.1 of \cite{KLLP2017}.
\begin{lemma}
\label{lemma:matrix differences}
	In the above setting, the following identities hold true:
	\begin{align}
		\label{matrix difference M}
		\bfw^T (\bfM(\bfx)-\bfM(\bfy)) \bfz =&\ \int_0^1 \int_{\Ga_h^\theta} w_h^\theta (\nabla_{\Ga_h^\theta} \cdot e_h^\theta) z_h^\theta \; \d\theta, \\
		\label{matrix difference A}
		\bfw^T (\bfA(\bfx)-\bfA(\bfy)) \bfz =&\ \int_0^1 \int_{\Ga_h^\theta} \nb_{\Ga_h^\theta} w_h^\theta \cdot (D_{\Ga_h^\theta} e_h^\theta)\nb_{\Ga_h^\theta}  z_h^\theta \; \d\theta ,
	\end{align}
	where
	$D_{\Ga_h^\theta} e_h^\theta =  \textnormal{tr}(E^\theta) I_3 - (E^\theta+(E^\theta)^T)$ with $E^\theta=\nabla_{\Ga_h^\theta} e_h^\theta \in \R^{3\times 3}$.
\end{lemma}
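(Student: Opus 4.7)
The plan is to prove both identities by the fundamental theorem of calculus in $\theta$, writing
$$
\bfw^T(\bfM(\bfx)-\bfM(\bfy))\bfz = \int_0^1 \frac{\d}{\d\theta}\bigl(\bfw^T\bfM(\bfy+\theta\bfe)\bfz\bigr)\,\d\theta
$$
and analogously for $\bfA$, then identifying each $\theta$-derivative as a surface integral. By the very definitions of the mass and stiffness matrices, the bilinear forms admit the representations
$\bfw^T\bfM(\bfy+\theta\bfe)\bfz = \int_{\Ga_h^\theta} w_h^\theta\, z_h^\theta$ and $\bfw^T\bfA(\bfy+\theta\bfe)\bfz = \int_{\Ga_h^\theta} \nabla_{\Ga_h^\theta} w_h^\theta \cdot \nabla_{\Ga_h^\theta} z_h^\theta$, so the task reduces to differentiating these integrals over the $\theta$-varying surface $\Ga_h^\theta$.

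The key observation is that, regarding $\theta$ as a time-like parameter, the family $\Ga_h^\theta = \Ga_h[\bfy+\theta\bfe]$ is swept out by moving the nodes with constant nodal velocity $\bfe$, and the induced velocity field on $\Ga_h^\theta$ is precisely the finite element function $e_h^\theta = \sum_j e_j\,\phi_j[\bfy+\theta\bfe]$. Since the coefficients $w_j, z_j$ are independent of $\theta$, the transport property of the evolving finite element basis (the analogue of the property recalled in Section~\ref{subsection:semi-discretization}) yields $\mat w_h^\theta = 0$ and $\mat z_h^\theta = 0$, where $\mat$ now denotes the $\theta$-material derivative along this motion.

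I then apply the standard Leibniz transport formulas for integrals over evolving surfaces (cf.~\cite[Section~2.3]{DziukElliott_acta}), used elementwise since $e_h^\theta$ is only piecewise smooth; no inter-element contributions arise because adjacent elements share node positions and the integrands are piecewise smooth. For the mass term the formula
$$
\frac{\d}{\d\theta}\int_{\Ga_h^\theta} w_h^\theta z_h^\theta = \int_{\Ga_h^\theta}\bigl(\mat w_h^\theta\, z_h^\theta + w_h^\theta\,\mat z_h^\theta + w_h^\theta z_h^\theta\,(\nabla_{\Ga_h^\theta}\cdot e_h^\theta)\bigr)
$$
collapses to the first claimed identity once the two material-derivative terms are dropped. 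For the stiffness term the analogous transport formula for the Dirichlet energy reads
$$
\frac{\d}{\d\theta}\int_{\Ga_h^\theta}\nabla_{\Ga_h^\theta} w_h^\theta\cdot\nabla_{\Ga_h^\theta} z_h^\theta = \int_{\Ga_h^\theta}\bigl(\nabla_{\Ga_h^\theta}\mat w_h^\theta\cdot\nabla_{\Ga_h^\theta} z_h^\theta + \nabla_{\Ga_h^\theta} w_h^\theta\cdot\nabla_{\Ga_h^\theta}\mat z_h^\theta + \nabla_{\Ga_h^\theta} w_h^\theta\cdot\mathcal{D}^\theta\,\nabla_{\Ga_h^\theta} z_h^\theta\bigr),
$$
with the symmetric tensor $\mathcal{D}^\theta$ arising from the $\theta$-derivatives of the induced metric and of the tangential projector along the motion; a direct computation identifies it as $\mathcal{D}^\theta = \textnormal{tr}(E^\theta) I_3 - (E^\theta + (E^\theta)^T)$ with $E^\theta = \nabla_{\Ga_h^\theta} e_h^\theta$, matching the tensor $D_{\Ga_h^\theta} e_h^\theta$ in the statement. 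Again the two material-derivative terms vanish, leaving the second identity. The only mildly delicate point in the whole argument is the verification of this explicit form of $\mathcal{D}^\theta$; everything else is routine once the transport property of the basis is exploited.
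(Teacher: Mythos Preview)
Your proposal is correct and follows precisely the approach the paper indicates: the paper does not give a self-contained proof but states that the identities ``result from the Leibniz rule and are given in Lemma~4.1 of \cite{KLLP2017}.'' Your argument---fundamental theorem of calculus in $\theta$, the transport property $\mat w_h^\theta=\mat z_h^\theta=0$ for finite element functions with $\theta$-independent nodal values, and the Leibniz transport formula on the evolving surface $\Ga_h^\theta$ moving with velocity $e_h^\theta$---is exactly the derivation behind the cited result.
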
	
The following lemma combines Lemmas 4.2 and 4.3 of \cite{KLLP2017}.
\begin{lemma}
\label{lemma:theta-independence}
	In the above setting, if 
	$$%\label{e-inf-bound}
		\| \nabla_{\Gamma_h[\bfy]} e_h^0 \|_{L^\infty(\Gamma_h[\bfy])} \le \tfrac12,
	$$
	then, for $0\le\theta\le 1$ and $1\le p \le \infty$, the finite element function \\ $w_h^\theta=\sum_{j=1}^\dof w_j \phi_j[\bfy+\theta\bfe]$ on $\Gamma_h^\theta=\Gamma_h[\bfy+\theta\bfe]$ is bounded by
	\begin{align*}
		&\| w_h^\theta \|_{L^p(\Gamma_h^\theta)} \leq c_p \, \|w_h^0 \|_{L^p(\Gamma_h^0)} , 
		\\
		&\| \nabla_{\Gamma_h^\theta} w_h^\theta \|_{L^p(\Gamma_h^\theta)} \le c_p \, \| \nabla_{\Gamma_h^0} w_h^0 \|_{L^p(\Gamma_h^0)} , 
		%\for 1\le p \le \infty,
	\end{align*}
	where $c_p$ is an absolute constant (in particular, independent of $0\le \theta\le 1$ and~$h$).	Moreover, $c_\infty=2$.
\end{lemma}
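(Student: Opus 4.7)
\smallskip
\noindent \textbf{Proof plan for Lemma~\ref{lemma:theta-independence}.}
The natural approach is to transport everything back to the reference surface $\Gamma_h^0 = \Gamma_h[\bfy]$ via a piecewise smooth map $\Phi^\theta : \Gamma_h^0 \to \Gamma_h^\theta$ that identifies nodes, and then show that the smallness assumption $\|\nabla_{\Gamma_h[\bfy]}e_h^0\|_{L^\infty} \le \tfrac12$ makes $\Phi^\theta$ a bi-Lipschitz diffeomorphism with constants uniform in $\theta \in [0,1]$ and in $h$. The key observation is that, by the very definition of the basis functions $\phi_j[\bfy + \theta\bfe]$ as the push-forward along the reference element, the identity
\[
w_h^\theta \circ \Phi^\theta = w_h^0 \qquad \text{on } \Gamma_h^0
\]
holds for every nodal vector $\bfw$. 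Thus all estimates reduce to estimating the Jacobian and the tangential differential of $\Phi^\theta$.

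First, I would work element-by-element and exploit that on each curved triangle $K^0 \subset \Gamma_h^0$, the corresponding piece $K^\theta \subset \Gamma_h^\theta$ is parametrized over the same flat reference triangle $\widehat K$ by $\widehat X + \theta \widehat e_h^0$, where $\widehat X$ parametrizes $K^0$ and $\widehat e_h^0$ is the polynomial lift of $e_h^0|_{K^0}$. Writing $\Phi^\theta|_{K^0}$ as the composition of these two parametrizations, one obtains $D\Phi^\theta = I_{T} + \theta \, (\nabla_{\Gamma_h^0} e_h^0)|_{K^0}$ on each tangent plane, up to projection onto the corresponding tangent space; the smallness hypothesis then gives $\|D\Phi^\theta - P\| \le \tfrac12$ for the relevant tangential parts, so the tangential Jacobian $J^\theta = \det(D\Phi^\theta)$ satisfies $\tfrac12 \le J^\theta \le 2$ and $\|(D\Phi^\theta)^{-1}\| \le 2$ uniformly in $\theta$.

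With these uniform bounds in hand, the $L^p$-estimate follows immediately from the change-of-variables formula,
\[
\int_{\Gamma_h^\theta} |w_h^\theta|^p = \int_{\Gamma_h^0} |w_h^0|^p \, J^\theta,
\]
yielding the first inequality with $c_p^p \le 2$ (and $c_\infty = 2$ being sharp from the pointwise identity). For the gradient bound, I would use the chain rule for tangential gradients,
\[
\nabla_{\Gamma_h^0} w_h^0 = (D\Phi^\theta)^{\tp} \, \bigl( \nabla_{\Gamma_h^\theta} w_h^\theta \bigr) \circ \Phi^\theta,
\]
invert to express $\nabla_{\Gamma_h^\theta} w_h^\theta$ in terms of $\nabla_{\Gamma_h^0} w_h^0$, and apply the same change of variables together with the bound $\|(D\Phi^\theta)^{-1}\| \le 2$ to conclude the second inequality.

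The main technical obstacle is the careful bookkeeping of the tangential vs.\ ambient derivatives on surfaces of different tangent planes, in particular verifying that the $1/2$ threshold in the $L^\infty$-assumption on $\nabla_{\Gamma_h^0} e_h^0$ does translate into uniform bounds $\tfrac12 \le J^\theta \le 2$ and $\|(D\Phi^\theta)^{-1}\| \le 2$; this requires the elementwise linearization above and a Neumann-series argument on the tangential part of $D\Phi^\theta$. Once this is done, the stated estimates, including the sharp constant $c_\infty = 2$, follow directly.
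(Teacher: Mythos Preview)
Your approach is correct and is the standard argument: the map $\Phi^\theta(p) = p + \theta e_h^0(p)$ is a piecewise-polynomial bijection $\Gamma_h^0 \to \Gamma_h^\theta$ with $w_h^\theta \circ \Phi^\theta = w_h^0$, and the smallness assumption on $\nabla_{\Gamma_h^0} e_h^0$ makes $D\Phi^\theta$ a uniformly bi-Lipschitz perturbation of the identity on each element, after which change of variables and the tangential chain rule give both bounds. The paper does not give its own proof of this lemma; it simply cites Lemmas~4.2 and~4.3 of \cite{KLLP2017} and remarks that the $L^p$ bound for $w_h^\theta$ itself (the first inequality) follows by the same proof as Lemma~4.3 there. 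So there is nothing to compare against beyond noting that your outline is exactly the natural route and presumably the one in the cited reference.

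Two minor corrections to your write-up. First, your remark that ``$c_\infty = 2$ is sharp from the pointwise identity'' is slightly off: the pointwise identity $w_h^\theta \circ \Phi^\theta = w_h^0$ actually gives $c_\infty = 1$ for the \emph{function} values; the constant $c_\infty = 2$ in the lemma pertains to the gradient bound and comes from $\|(D\Phi^\theta)^{-1}\| \le 2$ via the chain-rule step you describe. Second, your claimed range $\tfrac12 \le J^\theta \le 2$ for the surface Jacobian does not follow from $\|D\Phi^\theta - I_T\| \le \tfrac12$ alone: the singular values of $D\Phi^\theta$ then lie in $[\tfrac12,\tfrac32]$, so the area factor lies in $[\tfrac14,\tfrac94]$. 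This only affects the numerical value of $c_p$ for finite $p$, not the validity of the argument.
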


The first estimate is not stated explicitly in \cite{KLLP2017}, but follows immediately with the proof of Lemma~4.3 in \cite{KLLP2017}.

If $\| \nabla_{\Gamma_h[\bfy]} e_h^0 \|_{L^\infty(\Gamma_h[\bfy])}\le \frac14$, using the lemma for $w_h^\theta=e_h^\theta$ shows that
\begin{equation}
\label{e-theta}
	\| \nabla_{\Gamma_h^\theta} e_h^\theta \|_{L^\infty(\Gamma_h^\theta)} \le \tfrac12, \qquad 0\le\theta\le 1,
\end{equation}
and then the lemma with $p=2$ and the definition of the norms \eqref{M-L2} and \eqref{A-H1} (and interchanging the roles of $\bfy$ and $\bfy+\theta\bfe$) show that 
\begin{equation}
\label{norm-equiv}
	\begin{aligned}
		&\text{the norms $\|\cdot\|_{\bfM(\bfy+\theta\bfe)}$ are $h$-uniformly equivalent for $0\le\theta\le 1$,}
		\\
		&\text{and so are the norms $\|\cdot\|_{\bfA(\bfy+\theta\bfe)}$.}
	\end{aligned}
\end{equation}

Under the condition that 
$\eps := \| \nabla_{\Gamma_h[\bfy]} e_h^0 \|_{L^\infty(\Gamma_h[\bfy])}\le \tfrac14$, using \eqref{e-theta} in Lemma~\ref{lemma:matrix differences}  and applying the Cauchy-Schwarz inequality yields the bounds, with $c=c_\infty c_2^2$,
\begin{equation}
\label{matrix difference bounds}
	\begin{aligned}
		\bfw^T (\bfM(\bfx)-\bfM(\bfy)) \bfz \leq &\ c \eps \, \|\bfw\|_{\bfM(\bfy)} \|\bfz\|_{\bfM(\bfy)} , \\[1mm]
		\bfw^T (\bfA(\bfx)-\bfA(\bfy)) \bfz \leq &\ c \eps \, \|\bfw\|_{\bfA(\bfy)} \|\bfz\|_{\bfA(\bfy)} .
	\end{aligned}
\end{equation}
We will also use similar bounds where we use the $L^\infty$ norm of $z_h$ or its gradient and the $L^2$ norm of the gradient of $e_h$:
\begin{equation}
\label{matrix difference bounds e_x}
	\begin{aligned}
		\bfw^T (\bfM(\bfx)-\bfM(\bfy)) \bfz \leq &\ c \, \|\bfw\|_{\bfM(\bfy)} \|\bfe\|_{\bfA(\bfy)} , \\[1mm]
		\bfw^T (\bfA(\bfx)-\bfA(\bfy)) \bfz \leq &\ c \, \|\bfw\|_{\bfA(\bfy)} \|\bfe\|_{\bfA(\bfy)} .
	\end{aligned}
\end{equation}

Consider now a continuously differentiable function $\bfx:[0,T]\to\R^{3N}$ that defines a finite element surface $\Gamma_h[\bfx(t)]$ for every $t\in[0,T]$, and assume that its time derivative $\bfv(t)=\dot\bfx(t)$ is the nodal vector of a finite element function $v_h(\cdot,t)$ that satisfies
\begin{equation}
\label{vh-bound}
	\| \nabla_{\Gamma_h[\bfx(t)]}v_h(\cdot,t) \|_{L^{\infty}(\Gamma_h[\bfx(t)])} \le K, \qquad 0\le t \le T.
\end{equation}
With $\bfe=\bfx(t)-\bfx(s)=\int_s^t \bfv(r)\,dr$, %Lemmas~\ref{lemma:matrix differences} and~\ref{lemma:theta-independence} 
the bounds \eqref{matrix difference bounds}
then yield the following bounds, which were first shown in Lemma~4.1 of \cite{DziukLubichMansour_rksurf}: 

for $0\le s, t \le T$ with $K|t-s| \le \tfrac14$, 
we have with $C=c K$
\begin{equation}
\label{matrix difference bounds-t}
	\begin{aligned}
		\bfw^T \bigl(\bfM(\bfx(t))  - \bfM(\bfx(s))\bigr)\bfz \leq&\ C \, |t-s| \, \|\bfw\|_{\bfM(\bfx(t))}\|\bfz\|_{\bfM(\bfx(t))} , \\[1mm]
		\bfw^T \bigl(\bfA(\bfx(t))  - \bfA(\bfx(s))\bigr)\bfz \leq&\ C\,  |t-s| \, \|\bfw\|_{\bfA(\bfx(t))}\|\bfz\|_{\bfA(\bfx(t))}.   
	\end{aligned}
\end{equation}
Letting $s\to t$, this implies the bounds stated in Lemma~4.6 of~\cite{KLLP2017}:
\begin{equation}
\label{matrix derivatives}
	\begin{aligned}
		\bfw^T \frac\d{\d t}\bfM(\bfx(t))  \bfz \leq&\ C  \,\|\bfw\|_{\bfM(\bfx(t))}\|\bfz\|_{\bfM(\bfx(t))} , \\[1mm]
		\bfw^T \frac\d{\d t}\bfA(\bfx(t))  \bfz \leq&\ C \, \|\bfw\|_{\bfA(\bfx(t))}\|\bfz\|_{\bfA(\bfx(t))} .
	\end{aligned}
\end{equation}
Moreover, by patching together finitely many intervals over which $K|t-s| \le \tfrac14$, we obtain that
\begin{equation}
\label{norm-equiv-t}
	\begin{aligned}
		&\text{the norms $\|\cdot\|_{\bfM(\bfx(t))}$ are $h$-uniformly equivalent for $0\le t \le T$,}
		\\
		&\text{and so are the norms $\|\cdot\|_{\bfA(\bfx(t))}$.}
	\end{aligned}
\end{equation}

\subsection{Preparation: Interpolation of products of finite element functions}

For the stability of the velocity approximation
%, \bbk which uses here simply the finite element interpolation, in contrast to \cite{MCF} enforcing it via the Ritz map, \ebk
 we will need the following result.

\begin{lemma} \label{lemma:interpolation} 
For an admissible triangulation of a smooth surface  $\Gamma$, let $\Gamma_h^*$ be the interpolated surface with finite elements of polynomial degree $k\ge 1$.
Let  $\widetilde I_h^*:C(\Gamma_h^*)\to S_h(\Gamma_h^*)$ denote the finite element interpolation operator on $\Ga_h^*$. Then, the interpolation of the product of two finite element functions $a_h, b_h$ on $\Gamma_h^*$ is bounded by
$$
\| \widetilde I_h^* (a_h b_h) \|_{H^1(\Gamma_h^*)}  \le C\, \| a_h \|_{H^1(\Gamma_h^*)} \, \| b_h \|_{W^{1,\infty}(\Gamma_h^*)},
$$
where $C$ depends only on $\Gamma$ (more precisely, on bounds of higher derivatives of a parametrization of $\Gamma$), on shape-regularity and quasi-uniformity of the triangulation, and on the degree $k$.
\end{lemma}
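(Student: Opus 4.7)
The plan is to split the quantity to be estimated by the triangle inequality into the pointwise product $a_hb_h$ and the interpolation error $\widetilde{I}_h^*(a_hb_h) - a_hb_h$, and then estimate the two pieces separately, working element by element on $\Gamma_h^*$.

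For $a_hb_h$ itself the estimate is straightforward from the Leibniz rule applied piecewise on each element: since $\nabla_{\Gamma_h^*}(a_hb_h) = (\nabla_{\Gamma_h^*}a_h)b_h + a_h(\nabla_{\Gamma_h^*}b_h)$, one immediately obtains
\begin{equation*}
\|a_hb_h\|_{H^1(\Gamma_h^*)} \le \|a_h\|_{L^2(\Gamma_h^*)}\|b_h\|_{L^\infty(\Gamma_h^*)} + \|\nabla_{\Gamma_h^*} a_h\|_{L^2(\Gamma_h^*)}\|b_h\|_{L^\infty(\Gamma_h^*)} + \|a_h\|_{L^2(\Gamma_h^*)}\|\nabla_{\Gamma_h^*} b_h\|_{L^\infty(\Gamma_h^*)},
\end{equation*}
which is bounded by $C\,\|a_h\|_{H^1(\Gamma_h^*)}\|b_h\|_{W^{1,\infty}(\Gamma_h^*)}$.

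For the interpolation error, the key observation is that on each curved element $K$ of $\Gamma_h^*$ the function $a_hb_h$ pulls back to a polynomial of degree at most $2k$ on the reference triangle, hence lies in $H^{k+1}(K)$. The standard surface finite element interpolation estimate (as in Demlow \cite{Demlow2009}) gives, for $m = 0, 1$,
\begin{equation*}
|\widetilde{I}_h^*(a_hb_h) - a_hb_h|_{H^m(K)} \le C\,h^{k+1-m}\,|a_hb_h|_{H^{k+1}(K)}.
\end{equation*}
The central step is then to control $|a_hb_h|_{H^{k+1}(K)}$: by Leibniz on the element,
\begin{equation*}
|a_hb_h|_{H^{k+1}(K)} \le C\sum_{j=0}^{k+1} |a_h|_{W^{j,2}(K)}\,|b_h|_{W^{k+1-j,\infty}(K)},
\end{equation*}
and the inverse estimates $|a_h|_{W^{j,2}(K)} \le C\,h^{1-j}\|a_h\|_{H^1(K)}$ for $j\ge 1$ (and the analogous $|b_h|_{W^{m,\infty}(K)} \le C\,h^{1-m}\|b_h\|_{W^{1,\infty}(K)}$ for $m\ge 1$), valid on shape-regular, quasi-uniform finite elements of fixed polynomial degree, combine to give $|a_hb_h|_{H^{k+1}(K)} \le C\,h^{-k}\|a_h\|_{H^1(K)}\|b_h\|_{W^{1,\infty}(K)}$. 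Taking $m=1$ above the factor $h^{k+1-m} = h^k$ exactly cancels $h^{-k}$, yielding the elementwise bound
\begin{equation*}
\|\widetilde{I}_h^*(a_hb_h) - a_hb_h\|_{H^1(K)} \le C\,\|a_h\|_{H^1(K)}\,\|b_h\|_{W^{1,\infty}(K)}.
\end{equation*}
Pulling the $L^\infty$ norm of $b_h$ out of the sum over elements and applying Cauchy--Schwarz to the remaining squared $H^1$ contributions of $a_h$ produces the global bound, which combined with the first step concludes the proof.

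The main obstacle is bookkeeping rather than any conceptual difficulty: one has to verify the surface interpolation estimate and the inverse estimates on curved isoparametric elements of degree $k$, accounting for the pullback to a flat reference triangle and the Jacobians of the element maps, whose bounds encode the dependence on higher derivatives of a parametrization of $\Gamma$ as well as on shape-regularity and quasi-uniformity. These ingredients are standard and are used in exactly the same form elsewhere in the paper, so no new technical tools are required.
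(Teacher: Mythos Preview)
Your proof is correct and follows essentially the same approach as the paper: split via the triangle inequality into $a_hb_h$ and the interpolation error, bound the latter element by element using the Demlow interpolation estimate $\|\widetilde I_h^*(a_hb_h)-a_hb_h\|_{H^1(K)}\le Ch^k\|a_hb_h\|_{H^{k+1}(K)}$, expand the $H^{k+1}$ norm by the Leibniz rule, and use inverse estimates to absorb the factor $h^{-k}$. The only difference is cosmetic ordering---the paper bounds the interpolation error first and invokes the triangle inequality at the end, whereas you do it up front---but the argument is the same.
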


%We will show the following estimate:
%\begin{align*}
%	\| \Ih (e_h \nu_h) - e_h \nu_h \|_{L^2(\Ga_h[\xs])} \leq &\ c \|e_h\|_{H^1(\Ga_h[\xs])} \|\nu_h\|_{L^\infty(\Ga_h[\xs])} \\
%	&\ + c \| e_h \|_{L^2(\Ga_h[\xs])} \| \nbgh \nu_h \|_{L^\infty(\Ga_h[\xs])}.
%\end{align*}

\begin{proof} Let $K$ be a curved triangle of the triangulation $\Ga_h^*$. 
Using the interpolation error estimate of \cite[Proposition~2.7]{Demlow2009}, on the element $K$ we obtain
(with different constants $c$)
\begin{align*}
	&\| \Ih^* (a_h b_h) - a_h b_h \|_{H^1(K)} \leq c h^k \|a_h b_h \|_{H^{k+1}(K)} 
	\leq c h^k \sum_{j = 0}^{k+1} |a_h b_h |_{H^{j}(K)} \\
	&\leq  c h^k \sum_{j = 0}^{k+1} \sum_{i = 0}^{j} \| \nbgh^{i} a_h \|_{L^2(K)} \| \nbgh^{j-i} b_h \|_{L^\infty(K)} \\
	& =  c \sum_{j = 0}^{k} h^{k-j} \sum_{i = 0}^{j}  h^i \| \nbgh^{i} a_h \|_{L^2(K)} \, h^{j-i}\| \nbgh^{j-i} b_h \|_{L^\infty(K)} \\
	&\ \ + c \! \sum_{i = 0}^{k} \! h^i \| \nbgh^{i} a_h \|_{L^2(K)} \, h^{k-i} \| \nbgh^{k+1-i} b_h \|_{L^\infty(K)} 
%	\\ &\ 
	+ c h^k \| \nbgh^{k+1} a_h \|_{L^2(K)} \| b_h \|_{L^\infty(K)}  .
\end{align*}
By inverse estimates, this is further bounded by
\begin{align*}
	&\| \Ih^* (a_h b_h) - a_h b_h \|_{H^1(K)} 
%	&=  c \sum_{j = 0}^{k} h^{k-j} \sum_{i = 0}^{j}  h^j \| \nbgh^{i} a_h \|_{L^2(K)} \| \nbgh^{j-i} b_h \|_{L^\infty(K)} \\
%	&\quad\ + c \sum_{i = 1}^{k+1} h^k \| \nbgh^{i} a_h \|_{L^2(K)} \| \nbgh^{k+1-i} b_h \|_{L^\infty(K)} \\
%	&\quad\ + c h^k \| a_h \|_{L^2(K)} \| \nbgh^{k+1} b_h \|_{L^\infty(K)} \\
	\leq  c  \| a_h \|_{L^2(K)} \| b_h \|_{L^\infty(K)} \\
	&\quad\ +  c \| a_h \|_{L^2(K)} \| \nbgh b_h \|_{L^\infty(K)}+
	c \| \nbgh a_h \|_{L^2(K)} \| b_h \|_{L^\infty(K)} 
	\\
	&\leq c \,\| a_h \|_{H^1(K)} \| b_h \|_{W^{1,\infty}(K)}
%	\\
%	&\ + c \| a_h \|_{L^2(K)} \| \nbgh b_h \|_{L^\infty(K)} \\
%	\leq &\ c \big( \| \nbgh a_h \|_{L^2(K)} + \| a_h \|_{L^2(K)} \big) \| b_h \|_{L^\infty(K)} \\
%	&\ + c \| a_h \|_{L^2(K)} \| \nbgh b_h \|_{L^\infty(K)} ,
\end{align*}
Squaring and summing up over the triangles then shows that
$$
\| \Ih^* (a_h b_h) - a_h b_h \|_{H^1(\Gamma_h^*)} \le c \| a_h \|_{H^1(\Gamma_h^*)} \, \| b_h \|_{W^{1,\infty}(\Gamma_h^*)}.
$$
The stated result then follows with the triangle inequality.
\qed
\end{proof}

\subsection{Defects and errors}

We choose reference finite element functions $x_h^*(\cdot,t)$, $v_h^*(\cdot,t)$, $u_h^*(\cdot,t),w_h^*(\cdot,t)$ on the interpolated surface $\Gamma_h[\xs(t)]$ with nodal vectors 
\begin{align*}
&\xs(t)\in\R^{3N},\ \ \vs(t)\in\R^{3N},\\
&\us(t)=\begin{pmatrix} \bfH^\ast(t) \\ \mathbf{n}^\ast(t) \end{pmatrix} \in\R^{4N}, \quad
\ws(t)=\begin{pmatrix} \bfV^\ast(t) \\ \mathbf{z}^\ast(t) \end{pmatrix} \in\R^{4N},
\end{align*}
which are related to the exact solution $X$, $v$ and $u=(H,\nu)$, $w=(V,z)$ as follows: $\xs(t)$ and $\vs(t)$ collect the values at the finite element nodes of $X(\cdot,t)$ and $v(\cdot,t)$, respectively. The vectors $\us(t)$ and $\ws(t)$ need to be chosen in a different way.
The vector $\us(t)$ contains the nodal values of the finite element function $u_h^*(\cdot,t)\in S_h[\xs(t)]^4$ that is determined on the interpolated surface $\Gamma_h[\xs(t)]$ by a \emph{modified} Ritz map of the exact solution component $u$ that will be defined in Section~\ref{section:Defect}.
%
%omitting the ubiquitous argument $t$,
%\begin{equation}\label{uhs-ritz}
%\int_{\Gamma_h[\xs]} \!\! \!\!\!\! \nb_{\Gamma_h[\xs]} u_h^* \cdot \nabla_{\Gamma_h[\xs]} \varphi_h + \int_{\Gamma_h[\xs]} \!\!\!\!  u_h^* \cdot \varphi_h =
%\int_{\Gamma[X]} \!\! \!\!\!\! \nabla_{\Gamma[X]} u \cdot \nabla_{\Gamma[X]} \varphi_h^l + \int_{\Gamma[X]} \!\!\!\! u \cdot  \varphi_h^l
%\end{equation} 
%for all $\varphi_h \in S_h[\xs]^4$, where again $\varphi_h^l$ denotes the lift to a function on $\Gamma[X]$. In the same way, 
Similarly, the vector $\ws(t)$ contains the nodal values of the finite element function $w_h^*(\cdot,t)\in S_h[\xs(t)]^4$ that is defined on the interpolated surface $\Gamma_h[\xs(t)]$ by the Ritz map of the solution component $w$.

The nodal vectors $\xs(t)$, $\vs(t)$ then satisfy the equations \eqref{eq:matrix-form-X-v} up to some defect~$\dv$ that will be studied in Section~\ref{section:Defect},
\begin{subequations}
\label{eq:matrix-form-X-v-star}
\begin{align}
\label{eq:matrix-form-X-star}
	\dotxs &= \vs,
	\\
\label{dv}
	\vs &= \mathbf{V}^\ast \bullet \mathbf{n}^\ast + \dv, 
\end{align}
\end{subequations}
and $\us(t)$, $\ws(t)$ satisfy the equations  \eqref{eq:matrix--vector form} up to some defects  $\bfd_{\bf u}$ and $\bfd_{\bf w}$ that will also be studied in Section~\ref{section:Defect}: 
\begin{subequations}
\label{defect vectors}
	\begin{align}
                		\label{du}
		\bbM^{[4]}(\xs)\dotus - \bbA^{[4]}(\xs)\ws = &\ \bfF(\xs,\us)\ws + \bff(\xs,\us) 
%		\nonumber \\ &\qquad \qquad \ \ 
		+ \bfM^{[4]}(\xs) \du , \\
		\label{dw}
		\bbM^{[4]}(\xs)\ws + \bbA^{[4]}(\xs)\us = &\ \bfg(\xs,\us) + \bfM^{[4]}(\xs) \dw.
	\end{align}
\end{subequations}
In the following we omit the superscript $[4]$ on $\bfM$ and $\bfA$. Furthermore, we simplify the notation and abbreviate $\bfM(\bfx(t))$ and $\bfM(\xs(t))$ to $\M(t)$ and $\Ms(t)$, respectively.  Similarly we write  $\A$ and $\As$. 

The errors between the nodal values of the numerical solutions and the nodal values of the interpolated exact values are denoted by $\ex = \bfx - \xs$, $\ev = \bfv - \vs$, $\eu = \bfu - \us$ and $\ew = \bfw - \ws$ and their corresponding finite element functions on the interpolated surface $\Ga_h[\xs]$ are denoted by $e_x$, $e_v$, $e_u$, and $e_w$, respectively.

We obtain the error equations by subtracting \eqref{defect vectors} from \eqref{eq:matrix--vector form}
with \eqref{eq:matrix-form-w-mod} instead of \eqref{eq:matrix-form-w}, 
and \eqref{eq:matrix-form-X-v-star} from \eqref{eq:matrix-form-X-v}:
\begin{subequations} 
	\label{eq:error equations}
	\begin{align}
			\label{eq:error eq - x}
		\dotex =&\ \ev , \\
		\label{eq:error eq - v}
		\ev =&\   \mathbf{V} \bullet \mathbf{n}- \mathbf{V}^\ast \bullet \mathbf{n}^\ast - \dv,
		 \\
		\label{eq:error eq - u}
		\nonumber \M \doteu - \A \ew 
		=&\ -  \big( \M-\Ms \big) \dotus 
		+ \big( \A-\As \big) \ws \\
		\nonumber &\ + \big(\bfF(\bfx,\bfu)\bfw - \bfF(\xs,\us)\ws\big) \\
		&\ + \big(\bff(\bfx,\bfu) - \bff(\xs,\us)\big) - \Ms\du , \\
		\label{eq:error eq - w}
		\nonumber \M \ew + \A \eu 
		=&\ -  \big( \M-\Ms \big) \ws 
		- \big( \A-\As \big) \us \\
		&\ + \big(\bfg(\bfx,\bfu) - \bfg(\xs,\us)\big) - \Ms\dw  + \bfvartheta .
	\end{align}
\end{subequations}
We note further that $\ex(0)=0$,  but in general $\ev(0)$, $\eu(0)$ and $\ew(0)$ are different from $0$. 
%
%\medskip
%\noindent {\bf Remark:}\, 
%$\ev(0)$ is not zero in general. If we use $v_h=I_h(V_h\nu_h)$, then $\ev(0)$ are the nodal values of the function 
%$$I_h(V_h(0)\nu_h(0))-V(0)\nu(0) ,$$
%while $V_h(0) \neq V(0)$ in general. May be we do not need $\ev(0)$ in our stability estimate. 
%
%If we really need $\ev(0)$, it is also OK. For example, if we use $v_h=I_h(V_h\nu_h)$, then we generally have 
%\begin{align*}
%v_h-v_h^*
%&=I_h[V_h(0)\nu_h(0)]- I_h[V(0)\nu(0)] \\
%&=I_h[(V_h(0)-I_hV(0))\nu_h(0)] + I_h[I_hV(0)(\nu_h(0)-I_h\nu(0))]
%\end{align*}
%and therefore
%\begin{align*}
%\| v_h-v_h^* \|_{H^1} 
%&\le
%\| V_h(0)-I_hV(0) \|_{H^1}  + \| \nu_h(0)-I_h\nu(0) \|_{H^1} 
%\le
%Ch^k , 
%\end{align*}
%where the last inequality can be guaranteed if we add a correction term by either \eqref{eq:matrix-form-w-mod} or \eqref{modified-Ah}, and the second to last inequality is due to the $H^1$ stability of $I_h$, as shown in \eqref{H1-stability-Ih}.
%\medskip
%
%
%[kb: Indeed, Buyang is correct at pointing this out. ]
%\begin{itemize}
%	\item This is not good news, I think. In our MCF paper, we have also used that the initial error in $\bfv$ is zero, but as it is not here, I think, it is not zero there either. On the other hand, it does satisfy a bound  
%\end{itemize}
%
%
We recall from \eqref{vartheta} that
$$
\bfvartheta= \M(\bfx(0)) \,{\bar\bfe}_\bfw(0) \quad\text{ with }\quad  {\bar\bfe}_\bfw(0)=  \bar\bfw^*(0)- \bar \bfw(0).
$$
This is not to be confounded with $\ew(0)=\bar\bfw^*(0) -\bfw^*(0)$, where we remind that $\bar\bfw^*(0)$ contains the values of the exact solution $w$ in the nodes, whereas $\bfw^*(0)$ is constructed by a Ritz map from the exact solution $w$ at the initial time $0$.

%%% r.h.s. without testing for copy-paste
%\begin{align*}
%	=&\ -  \big( \K-\Ks \big) \vs \\
%	&\ + \big(\bfg(\bfx,\bfu,\bfw) - \bfg(\xs,\us,\ws)\big) - \Ms \dv , \\
%	=&\ -  \big( \M-\Ms \big) \dotus 
%	+ \big( \A-\As \big) \ws \\
%	&\ + \big(\bfF(\bfx,\bfu)\bfw - \bfF(\xs,\us)\ws\big) \\
%	&\ + \big(\bff(\bfx,\bfu) - \bff(\xs,\us)\big) - \Ms\du , \\
%	=&\ -  \big( \M-\Ms \big) \ws 
%	- \big( \A-\As \big) \us \\
%	&\ + \big(\bfg(\bfx,\bfu) - \bfg(\xs,\us)\big) - \Ms\dw , \\
%\end{align*}

\subsection{Stability estimate}

We need to bound the errors at time $t$ in terms of the defects up to time $t$ and the errors in the initial values.
The errors will be estimated in the $H^1$ norm on the interpolated surface $\Gamma_h[\xs]$: for a nodal vector $\bfe$ corresponding to a finite element function $e\in S_h(\bfx^*)$, we have with the matrix $\bfK(\xs)=\bfM(\xs)+\bfA(\xs)$ that
$$
\normK{\bfe}^2 = \bfe^T  \bfK(\xs) \bfe = \| e \|_{H^1(\Gamma_h[\xs])}^2.
$$
The defect $\dv$  then needs to be sufficiently small in the $H^1$ norm, and the other defects will be required to be small in the $L^2$ norm
$$
\| \bfd \|_{\bfM(\xs)}^2 = \bfd^T \bfM(\xs) \bfd = \| d \|_{L^2(\Gamma_h[\xs])}^2,
$$
and their time derivatives as well as $ {\bar\bfe}_\bfw(0)$ will be required to be small in
 the norm given by
\begin{equation*}
	\|\bfd\|_{\star,\xs}^2 := \bfd^T \bfM(\xs)\bfK(\xs)\inv \bfM(\xs) \bfd .
\end{equation*}
By \cite[Eq.~(5.5)]{KLLP2017},  this equals
the following dual norm for the corresponding finite element function $d\in S_h[\bfx^*]$, which has  the vector of nodal values $\bfd$,
$$
\|\bfd\|_{\star,\xs} = \|d\|_{H_h\inv(\Gamma_h[\bfx^*])} := 
\sup_{0\ne\varphi_h\in S_h[\xs]} \frac{ \int_{\Gamma_h[\xs]} d  \cdot\varphi_h } { \| \varphi_h \|_{H^1(\Gamma_h[\xs])} } \, .
$$
The following result provides the key stability estimate, which bounds the errors in terms of the defects and the initial errors.
\begin{proposition}
	\label{proposition:stability} Assume that the reference finite element functions $x_h^*(\cdot,t)$, $v_h^*(\cdot,t)$, $u_h^*(\cdot,t),w_h^*(\cdot,t)$ on the interpolated surface $\Gamma_h[\xs(t)]$ have $W^{1,\infty}$ norms that are bounded independently of $h$, for all $t\in[0,T]$. 
Assume that, for some $\kappa$ with $1 < \kappa \leq k$, the defects are bounded by 
	\begin{equation}
	\label{eq:assumed defect bounds}
		\begin{aligned}
			\|\dv(t)\|_{\bfK(\xs(t))}  \leq &\ c h^\kappa , \\
			\|\du(t)\|_{\bfM(\xs(t))} + \|\dotdu(t)\|_{\star,\xs(t)}  \leq &\ c h^\kappa , \\
			\|\dw(t)\|_{\bfM(\xs(t))} + \|\dotdw(t)\|_{\star,\xs(t)} \leq &\ c h^\kappa , \\
		%	\|\dx(t)\|_{\bfM(\xs(t))} \leq &\ c h^\kappa ,
		\end{aligned}
		\qquad \hbox{for }\  0 \leq t \leq T ,
	\end{equation}
	and that the errors of the initial values are bounded by
	\begin{equation}\label{eq:assumed initial errors}
	\begin{aligned}
	  %\| \ev(0) \|_{\bfK(\bfx^0)} +
	  \| \eu(0) \|_{\bfK(\bfx^0)} + \| \ew(0) \|_{\bfK(\bfx^0)} &\le ch^\kappa, \\
	  \| {\bar\bfe}_\bfw(0) \|_{\star,\bfx^0} \ \ &\le ch^\kappa.
	  \end{aligned}
	\end{equation}
	Then, there exists  $h_0>0$ such that the following stability estimate holds for all $h\leq h_0$ and $0\le t \le T$:
	\begin{equation}
	\label{eq:stability bound}
		\begin{aligned}
			& 	  \hspace{-10pt}
                              \normKt{\ex(t)}^2 + \normKt{\ev(t)}^2 + \normKt{\eu(t)}^2 + \normKt{\ew(t)}^2 \\[1mm]
			\leq 
			&\  C \max_{0 \leq s \leq t} \big( \|\dv(s)\|_{\K(\xs(s))}^2 + \|\du\s\|_{\star,\xs(s)}^2  \big) \\
			&\  + C \! \int_0^t \! \Big(  \|\du\s\|_{\bfM(\xs\s)}^2 + \|\dotdu\s\|_{\star,\xs(s)}^2  \\
			&\ \phantom{+ C \! \int_0^t \! \Big( \ } + \|\dw\s\|_{\bfM(\xs\s)}^2 + \|\dotdw\s\|_{\star,\xs(s)}^2 \Big) \d s \\
			&\  + C \bigl(\|\eu(0)\|_{\bfK(\bfx^0)}^2 +  \|\ew(0)\|_{\bfK(\bfx^0)}^2 \bigr)  + C \| {\bar\bfe}_\bfw(0) \|_{\star,\bfx^0}^2, 
		\end{aligned}
	\end{equation}
	where $C$ is independent of $h$ and $t$, but depends on the final time $T$. 
\end{proposition}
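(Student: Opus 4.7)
The plan is to work in the matrix--vector formulation \eqref{eq:matrix-form-X-v}--\eqref{eq:matrix--vector form} with the modification \eqref{eq:matrix-form-w-mod}, and carry out a bootstrap argument. Let $t^* \in (0,T]$ be maximal such that
$\normKt{\ex(t)} + \normKt{\eu(t)} + \normKt{\ew(t)} \le h^{(\kappa+1)/2}$
for $0 \le t \le t^*$. On $[0,t^*]$, an inverse estimate combined with the assumed $W^{1,\infty}$ bounds on the reference functions $x_h^*, v_h^*, u_h^*, w_h^*$ yields $h$-independent $W^{1,\infty}$ bounds on $x_h, v_h, u_h, w_h$, because the bootstrap exponent $(\kappa+1)/2 > 1$. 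In particular $\|\nabla_{\Gamma_h^*} e_x\|_{L^\infty} \le \tfrac14$, so by Section~\ref{section:aux} the norms $\|\cdot\|_{\bfM(\bfx)}$ and $\|\cdot\|_{\Ms}$ (and analogously for $\bfA$) are $h$-uniformly equivalent on $[0,t^*]$, and the matrix difference bounds \eqref{matrix difference bounds e_x} between $\bfx$ and $\xs$ apply. Showing that the energy estimate in fact yields the stronger bound $O(h^\kappa)$ then forces $t^* = T$ by continuity.

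The core ingredient is the energy estimate for the coupled system \eqref{eq:error eq - u}--\eqref{eq:error eq - w} for $(\eu,\ew)$, whose left-hand sides $\M\doteu - \A\ew$ and $\M\ew + \A\eu$ carry the anti-symmetric structure characteristic of a second-order reformulation of a fourth-order parabolic equation. Following the scheme of Figure~\ref{fig:energy estimates}, I would test \eqref{eq:error eq - u} with $\doteu$ and simultaneously test the time-differentiated \eqref{eq:error eq - w} with $\ew$; the two cross-terms $-\doteu^T\A\ew$ and $+\ew^T\A\doteu$ cancel by symmetry of $\A$, leaving the pleasant identity
\begin{equation*}
   \normMs{\doteu}^2 + \tfrac{d}{dt}\bigl(\tfrac12 \normMs{\ew}^2 \bigr)
   = \text{RHS terms involving } \ex, \eu, \ew, \du, \dw, \dotdw \text{ and } \dot\M, \dot\A .
\end{equation*}
The $H^1$ quantities $\normAs{\ew}$ and $\normAs{\eu}$ are then recovered pointwise in time from the algebraic equations themselves: testing \eqref{eq:error eq - u} with $\ew$ yields $\normAs{\ew}^2 \le \normMs{\ew} \normMs{\doteu} + (\text{defect-like terms})$, while a corresponding pairing with the second equation, using the Ritz projection's $H^{-1}$ control on $\bfvartheta$ via \eqref{vartheta}, controls $\normAs{\eu}$. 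Nonlinear contributions from $\bfF(\bfx,\bfu)\bfw - \bfF(\xs,\us)\ws$, $\bff(\bfx,\bfu)-\bff(\xs,\us)$, $\bfg(\bfx,\bfu)-\bfg(\xs,\us)$ split into parts linear in $\ex$, $\eu$, $\ew$ (each with an $h$-independent $W^{1,\infty}$ prefactor coming from the bootstrap) plus matrix-difference terms handled by \eqref{matrix difference bounds e_x}; Young's inequality absorbs them into the dissipative $\normMs{\doteu}^2$ term or into Gronwall-friendly contributions.

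The velocity error \eqref{eq:error eq - v} is treated separately and does not enter the $(\eu,\ew)$ energy. Expanding
$\bfV\bullet\bfn - \bfV^*\bullet\bfn^* = (\bfV-\bfV^*)\bullet\bfn + \bfV^*\bullet(\bfn-\bfn^*)$
and translating to finite element functions on $\Gamma_h^*$ via the hat-lift (after accounting for the surface mismatch between $\bfx$ and $\xs$), this expresses $\ev$ as $\widetilde I_h^*$ of a sum of products of finite element functions, with one factor in $H^1$ and one in $W^{1,\infty}$. Lemma~\ref{lemma:interpolation} then yields
$\normKs{\ev} \le C\, \normKs{\eu} + \normKs{\dv}$,
which is the reason the assumed bound on $\dv$ is stated in the $H^1$ rather than $L^2$ norm. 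The position error finally follows from $\ex(t) = \int_0^t \ev(s)\,ds$ (with $\ex(0)=0$), using \eqref{matrix difference bounds-t} to compare the time-dependent matrices at $s$ and $t$.

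Combining these three ingredients, Gronwall's inequality applied to the combined energy $E(t) = \normMs{\ew(t)}^2 + \normAs{\eu(t)}^2 + \normAs{\ex(t)}^2$ (augmented by the integrated dissipation $\int_0^t \normMs{\doteu}^2$) yields the asserted bound \eqref{eq:stability bound} with defect terms appearing on the right-hand side exactly in the norms indicated. The main obstacle is the interplay between the bootstrap $W^{1,\infty}$ hypothesis and the energy estimate: every nonlinear term and every commutator involving $\M-\Ms$, $\A-\As$, $\dot\M$, $\dot\A$ must be handled using only $H^1$ norms of the errors, so that no $h^{-1}$ factor from an inverse estimate needs to be invoked at the estimation step. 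This is exactly what the anti-symmetric structure of \eqref{eq:error eq - u}--\eqref{eq:error eq - w} and Lemma~\ref{lemma:interpolation} make possible once the bootstrap bounds are in force, and is the reason the stability bound requires $\kappa > 1$ strictly (hence polynomial degree $k \ge 2$).
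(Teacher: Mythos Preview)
Your overall strategy (bootstrap, matrix--vector error equations, anti-symmetric structure, Lemma~\ref{lemma:interpolation} for the velocity, Gronwall) matches the paper's, but the core energy argument has a genuine gap.

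You propose a single pairing: test \eqref{eq:error eq - u} with $\doteu$ and the time-differentiated \eqref{eq:error eq - w} with $\ew$, yielding $\|\doteu\|_{\M}^2 + \tfrac{d}{dt}\tfrac12\|\ew\|_{\M}^2$ on the left. The difficulty is that the right-hand side then contains terms that require the full $H^1$-norm $\|\doteu\|_{\K}$, not just $\|\doteu\|_{\M}$. Concretely, testing \eqref{eq:error eq - u} with $\doteu$ produces $\doteu^T(\A-\As)\ws$, which by \eqref{matrix difference bounds e_x} is bounded by $c\|\doteu\|_{\A}\|\ex\|_{\A}$; and in the time-differentiated \eqref{eq:error eq - w} tested with $\ew$, the term $\ew^T\tfrac{d}{dt}\bigl(\bfg(\bfx,\bfu)-\bfg(\xs,\us)\bigr)$ unavoidably generates $c\|\ew\|_{\M}\|\doteu\|_{\K}$ (this is the paper's bound \eqref{est-dot-g}). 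Since your left-hand side controls only $\|\doteu\|_{\M}$, you cannot absorb these contributions, and the ``pointwise recovery'' of $\|\ew\|_{\A}$, $\|\eu\|_{\A}$ from the algebraic relations does not help: those recoveries themselves produce $\|\doteu\|_{\K}$ terms on the right (test \eqref{eq:error eq - w} with $\doteu$ and you get $\doteu^T(\A-\As)\us$).

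The paper closes this loop with \emph{four} energy estimates (Figure~\ref{fig:energy estimates}): (i) test \eqref{eq:error eq - u} with $\ew$ and \eqref{eq:error eq - w} with $\doteu$; (ii) test with $\eu$ and $\ew$; (iii) your pairing; (iv) test \eqref{eq:error eq - u} with $\dotew$ and the differentiated \eqref{eq:error eq - w} with $\doteu$. Estimate (iv) is what supplies $\|\doteu\|_{\A}^2$ on the left, and the combination (iii)$+$(iv) is then balanced against (i)$+$(ii), which in turn control $\|\eu\|_{\K}^2$ uniformly in time. The four estimates are genuinely interlocked; none of them closes on its own. A minor point: in the velocity bound you write $\normKs{\ev}\le C\normKs{\eu}+\normKs{\dv}$, but since $V_h$ is a component of $w_h$ (not $u_h$), the correct bound is $\normKs{\ev}\le c\bigl(\normKs{\eu}+\normKs{\ew}\bigr)+\normKs{\dv}$.
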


\begin{proof} 
The proof uses energy estimates for the error equations in the matrix--vector formulation \eqref{eq:error equations} and relies on the lemmas of Section~\ref{section:aux}, which relate the different finite element surfaces.
	While this basic procedure of the proof looks similar to that of \cite{MCF} and \cite{KLLP2017}, there are substantial differences and technical difficulties that are peculiar to the fourth-order system. 
	 
%	
%	Since we need uniform-in-time $H^1$-norm error bounds (in order to control the $W^{1,\infty}$ norm of the errors in $u$ via an inverse inequality), ideally we would test with the time derivative of the error vectors, which is not feasible directly in the case of the purely algebraic error equation \eqref{eq:error eq - w}.
	
We  exploit the skew-symmetric structure of the system of error equations  \eqref{eq:error eq - u}--\eqref{eq:error eq - w}. The uniform-in-time stability estimate follows from the combination of four different sets of auxiliary energy estimates (denoted by (i)--(iv)), divided into two major parts, Parts (A.1) and (A.2), and finally combining them in Part (A.3), which gives the uniform-in-time stability bound for the errors $\eu$ and $\ew$, as illustrated in Figure~\ref{fig:energy estimates}. 
	Part (B) of the proof contains the estimates for the velocity error equation \eqref{eq:error eq - v}, and finally the two are combined in Part (C), to show the stability bound \eqref{eq:stability bound}. 
	
	\begin{figure}[htbp]
		\begin{center}
			\includegraphics[width=\textwidth]{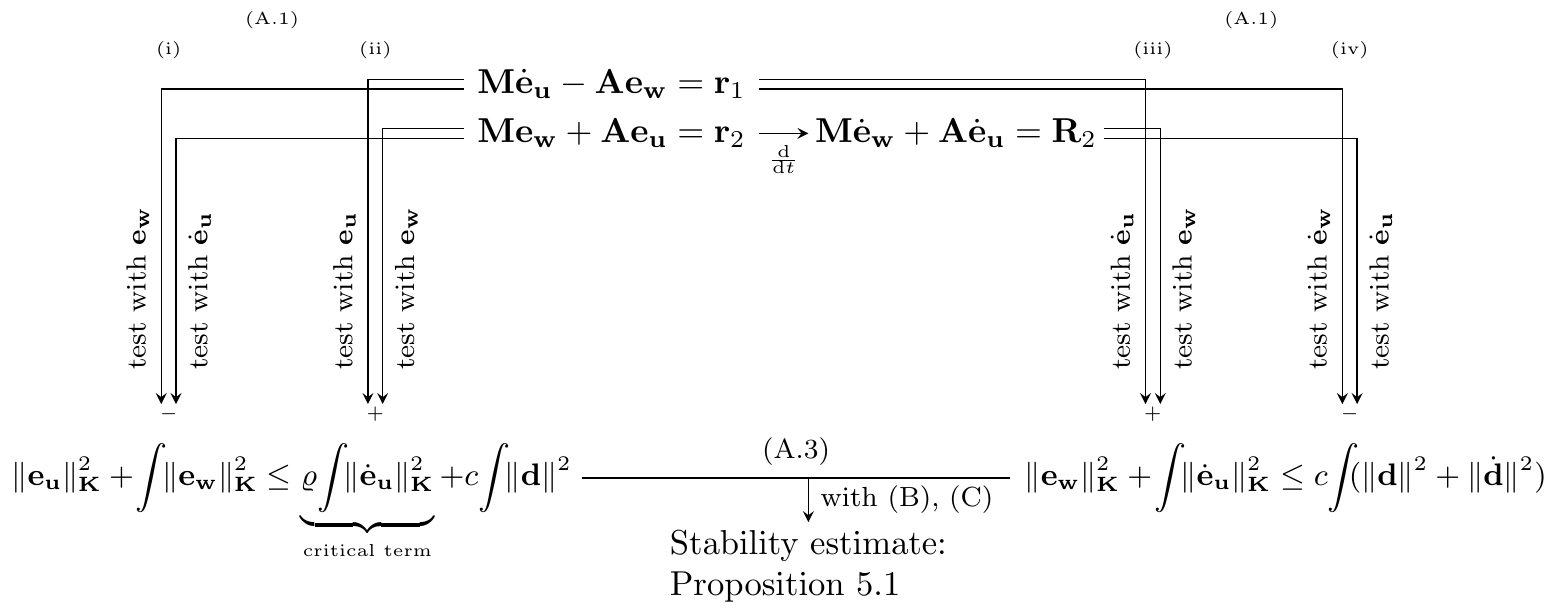}
			\caption{Sketch of the structure of the energy estimates of Part (A) for the stability proof. In the diagram, $\bfr_1$ and $\bfr_2$ denote the right-hand sides of \eqref{eq:error eq - u} and \eqref{eq:error eq - w}, respectively. (Note that, after time differentiation, $\bfR_2$ contains terms other than only the time derivative of $\bfr_2$.)}
			\label{fig:energy estimates}
		\end{center}
	\end{figure}
	
	Let $t^*\in(0,T]$ be the maximal time such that the following inequalities hold:
	\begin{equation}
	\label{eq:assumed bounds}
		\begin{aligned}
			\|e_x(\cdot,t)\|_{W^{1,\infty}(\Ga_h[\xs(t)])} \leq &\ h^{(\kappa-1)/2} , \\
			\|e_v(\cdot,t)\|_{W^{1,\infty}(\Ga_h[\xs(t)])} \leq &\ h^{(\kappa-1)/2} , \\
			\|e_u(\cdot,t)\|_{W^{1,\infty}(\Ga_h[\xs(t)])} \leq &\ h^{(\kappa-1)/2} , \\
			\|e_w(\cdot,t)\|_{W^{1,\infty}(\Ga_h[\xs(t)])} \leq &\ h^{(\kappa-1)/2} , 
		\end{aligned} \qquad \textrm{ for } \quad t\in[0,t^*].
	\end{equation}
	Note that $t^*>0$ since initially $e_x(\cdot,0)=0$ 
	and, by an inverse inequality, we have 
	\begin{align*}
	\|e_u(\cdot,0)\|_{W^{1,\infty}(\Ga_h[\xs(0)])} &\le ch^{-1}\|e_u(\cdot,0)\|_{H^{1}(\Ga_h[\xs(0)])} 
	\\
	&= ch^{-1} \normKt{\eu(0)}
	\le Ch^{\kappa-1},
	\end{align*}
	where the last inequality holds by assumption~\eqref{eq:assumed initial errors}. By the same argument, also 
	$\|e_w(\cdot,0)\|_{W^{1,\infty}(\Ga_h[\xs(0)])} \le Ch^{\kappa-1}$ 
	and, as a result of $v_h = \widetilde I_h (V_h\nu_h)$, 
	\begin{align*}
	&\|e_v(\cdot,0)\|_{W^{1,\infty}(\Ga_h[\xs(0)])} \\
	&\le c\|e_u(\cdot,0)\|_{W^{1,\infty}(\Ga_h[\xs(0)])} +c \|e_w(\cdot,0)\|_{W^{1,\infty}(\Ga_h[\xs(0)])}  
	\le Ch^{\kappa-1} .
	\end{align*}
	By continuity, we then obtain for sufficiently small $h$ that $t^*>0$ (which {\it a priori} might depend on $h$). 
	We first prove the stated error bounds for $0\leq t \leq t^*$. At the end of the proof we will show that in fact $t^*$ coincides with $T$.
%	
%	
%	To estimate the error initial value for $\bfw$, using \eqref{eq:matrix-form-w} and \eqref{dw} at $t=0$, we have
%	\begin{align*}
%		\bfM(\bfx(0)) \bfw(0) = &\ -\bfA(\bfx(0))\bfu(0) +\bfg(\bfx(0),\bfu(0)) \\
%		\bbM(\xs(0)) \ws(0) = &\ -\bbA(\xs(0))\us(0) + \bfg(\xs(0),\us(0)) + \bfM(\xs(0)) \dw(0) .
%	\end{align*}
%	Using that $\bfx(0) = \xs(0)$ and $\bfu(0) = \us(0)$ and since $\bfM(\xs(0))$ is invertible, we obtain \begin{equation*}
%		\ew(0) = \bfw(0) - \ws(0) = - \dw(0) .
%	\end{equation*}
%	Nevertheless, we only have the estimates
%	\begin{align*}
%		\|\ew(0)\|_{\Ks} = \|\dw(0)\|_{\Ks} = &\ O(h) , \\
%		\|\ew(0)\|_{\Ms} = \|\dw(0)\|_{\Ms} = &\ O(h^2) .
%	\end{align*}
%	[kb: I think the first estimate is true, but not enough.]
%	
%	
	
	% W^{1,$\infty$} bounds
	Since the reference finite element functions $x_h^*(\cdot,t),v_h^*(\cdot,t),u_h^*(\cdot,t),w_h^*(\cdot,t)$ on the interpolated surface $\Gamma_h[\xs(t)]$ have $W^{1,\infty}$ norms that are bounded independently of $h$ for all $t\in[0,T]$,
	the bounds~\eqref{eq:assumed bounds} together with Lemma~\ref{lemma:theta-independence}  imply that the $W^{1,\infty}$ norms of the ESFEM functions $x_h(\cdot,t),v_h(\cdot,t),u_h(\cdot,t)$, $w_h(\cdot,t)$ on the discrete surface $\Gamma_h[\bfx(t)]$ are also bounded independently of $h$ and $t\in[0,t^*]$, and so are their lifts to the interpolated surface $\Gamma_h[\xs(t)]$. 
%	By a similar argument, see (A.iii) in the proof of \cite[Proposition~7.1]{MCF}, the assumed smoothness of the time derivatives of the exact surface functions (e.g.~$\mat u, \pa^{(2)} u$, $\mat w$) and interpolation estimates, yield the boundedness of the time derivatives of their interpolations ($\mat_h u_h^*, \pa^{(2)}_h u_h^*$, $\mat_h w_h^*$).
	In particular,  it will be important that the discrete velocity $v_h$ with nodal vector $\bfv$ satisfies \eqref{vh-bound}.
	
	The estimate on the position errors $e_x$ in \eqref{eq:assumed bounds} and the $W^{1,\infty}$ bound on $v_h$ immediately imply that the results of Section~\ref{section:aux} apply with $\bfx$ and $\xs$ in the roles of $\bfx$ and $\bfy$, respectively. In particular, due to the bounds in \eqref{eq:assumed bounds} (for a sufficiently small $h \leq h_0$), the main condition of Lemma~\ref{lemma:theta-independence} and also \eqref{e-theta} is satisfied (with $e_h^\theta=e_x^\theta$), hence the $h$-uniform norm equivalences in \eqref{norm-equiv} and the estimates in \eqref{matrix difference bounds} and \eqref{matrix difference bounds e_x} hold between the surfaces defined by $\bfx$ and $\xs$. Similarly, again due to \eqref{eq:assumed bounds} the bound \eqref{vh-bound} also holds, and hence the estimates in \eqref{matrix difference bounds-t}, \eqref{matrix derivatives} and the $h$-uniform norm equivalences in time \eqref{norm-equiv-t} also hold. 
	When referring to these results from Section~\ref{section:aux} (\eqref{norm-equiv}--\eqref{matrix difference bounds e_x}, \eqref{matrix difference bounds-t}--\eqref{norm-equiv-t}) within the stability proof below, following the above argument, we always mean that their respective smallness assumptions are satisfied via \eqref{eq:assumed bounds}, but we will not repeat this argument at each instance.

	In the following $c$ and $C$ will denote generic constants that might take different values on different occurrences. In contrast, constants with a subscript (such as $c_0$) will play a distinctive role in the proof, and will not change their value between appearances.

	(A) \emph{Estimates for the surface PDEs:}
	
	(A.1)  We start by showing an error estimate for $\eu$ in the $\K$-norm uniformly in time for $0\le t \le t^*$. A critical term will appear, which will be controlled in Part (A.2) and eliminated in (A.3). 
	
	\emph{Energy estimate (i):} We test \eqref{eq:error eq - u} with $\ew$, and test \eqref{eq:error eq - w} with $\doteu$, so that we obtain the two equations
	\begin{align*}
		\ew^T \M \doteu - \ew^T \A \ew 
		=&\ - \ew^T  \big( \M-\Ms \big) \dotus 
		+ \ew^T \big( \A-\As \big) \ws \\
		&\ + \ew^T \big(\bfF(\bfx,\bfu)\bfw - \bfF(\xs,\us)\ws\big) \\
		&\ + \ew^T \big(\bff(\bfx,\bfu) - \bff(\xs,\us)\big) - \ew^T \Ms\du , \\
		\doteu^T \M \ew + \doteu^T \A \eu 
		=&\ - \doteu^T \big( \M-\Ms \big) \ws 
		- \doteu^T \big( \A-\As \big) \us \\
		&\ + \doteu^T \big(\bfg(\bfx,\bfu) - \bfg(\xs,\us)\big) - \doteu^T \Ms\dw  + \doteu^T \bfvartheta,
	\end{align*}
	In order to eliminate the mixed term $\ew^T \M \doteu$ (using the symmetry of the mass matrix $\bfM$), we then subtract the former equation from the latter, and obtain
	\begin{equation}
	\label{eq:i - pre estiamtes}
		\begin{aligned}
			\|\ew\|_{\A}^2 + \doteu^T \A \eu 
			= &\ + \ew^T \big( \M-\Ms \big) \dotus 
			%	\\ &\ 
			- \ew^T \big( \A-\As \big) \ws \\
			&\ - \ew^T \big(\bfF(\bfx,\bfu)\bfw - \bfF(\xs,\us)\ws\big) \\
			&\ - \ew^T \big(\bff(\bfx,\bfu) - \bff(\xs,\us)\big) 
			+ \ew^T \Ms\du , \\
			&\ - \doteu^T \big( \M-\Ms \big) \ws 
			%	\\ &\ 
			- \doteu^T \big( \A-\As \big) \us \\
			&\ + \doteu^T \big(\bfg(\bfx,\bfu) - \bfg(\xs,\us)\big) 
			- \doteu^T \Ms\dw   + \doteu^T \bfvartheta .
		\end{aligned}
	\end{equation}
	
	The terms in equation \eqref{eq:i - pre estiamtes} will now be bounded separately, using the results of Section~\ref{section:aux}. 
	
	For the second term on the left-hand side of \eqref{eq:i - pre estiamtes}, using the product rule, the symmetry of $\A$ and the bound \eqref{matrix derivatives}, we have 
	\begin{equation}
	\label{eq:i estimate lhs derivative}
		\begin{aligned}
			\doteu^T \A \eu 
			= &\ \Half \diff \Big(\eu^T \bfA \eu \Big) - \Half \eu^T \dot \A \eu \\
			\geq &\ \Half \diff \|\eu\|_{\A}^2 -c \|\eu\|_{\A}^2 .
		\end{aligned}
	\end{equation}
	
	On the right-hand side, for the matrix difference terms in the first and fourth line of \eqref{eq:i - pre estiamtes} we use \eqref{matrix difference bounds e_x}, and, recalling that $\Ms+\As=\Ks$, we obtain
	\begin{equation}
	\label{eq:i estimates matrix difference terms}
		\begin{aligned}
			\ew^T \big( \M-\Ms \big) \dotus 
			- \ew^T \big( \A-\As \big) \ws \leq &\ c \|\ew\|_{\K} \|\ex\|_{\K} , \\
			- \doteu^T \big( \M-\Ms \big) \ws 
			- \doteu^T \big( \A-\As \big) \us \leq &\ c \|\doteu\|_{\K} \|\ex\|_{\K} .
		\end{aligned}
	\end{equation}
%
%In \cite[below (7.27)]{MCF}, we have shown that 
%\begin{align*}
%&-\dot{\mathbf{e}}_{\mathbf{u}}^{T}\left(\mathbf{A}-\mathbf{A}^{*}\right) \mathbf{u}^{*} \\ 
%&\leq-\frac{\mathrm{d}}{\mathrm{d} t}\left(\mathbf{e}_{\mathbf{u}}^{T}\left(\mathbf{A}-\mathbf{A}^{*}\right) \mathbf{u}^{*}\right)+c\left\|\mathbf{e}_{\mathbf{u}}\right\|_{\mathbf{A}^*}\left(\left\|\mathbf{e}_{\mathbf{v}}\right\|_{\mathbf{K}^*}+\left\|\mathbf{e}_{\mathbf{x}}\right\|_{\mathbf{K}^*} \right)
%\end{align*}
%
For the defect terms in \eqref{eq:i - pre estiamtes}, %using the Cauchy--Schwarz inequality and the norm equivalence \eqref{norm-equiv}, 
	we obtain
\begin{align*}
\ew^T \Ms\du &= \int_{\Ga_h[\xs]} \!\! e_w \, d_u 
\\
&\le \| e_w \|_{H^1(\Ga_h[\xs])} \, \| d_u \|_{H_h^{-1}(\Ga_h[\xs])} 
= \| \ew \|_{\Ks}  \| \du \|_{\star,\xs}
\end{align*}
and in the same way
$$
- \doteu^T \Ms\dw \leq  \| \doteu \|_{\Ks}  \| \dw \|_{\star,\xs},
$$
%
%Since the defect is $O(h^k)$ in the $L^2$ norm, see \eqref{eq:stability bound}, we have a better estimate:
%$$
%- \doteu^T \Ms\dw \leq  \| \doteu \|_{\Ms}  \| \dw \|_{\Ms} , 
%$$
%which requires a weaker norm of $\dot\eu$. 
%
and similarly, using the norm equivalence \eqref{norm-equiv-t},
\begin{align*}
\doteu(t)^T \bfvartheta &= \doteu(t)^T \M(\bfx^0) {\bar\bfe}_\bfw^0 
\\
&\le \| \doteu(t) \|_{\bfK(\bfx^0)} \, \| {\bar\bfe}_\bfw^0 \|_{\star,\bfx^0}
\le c \| \doteu(t) \|_{\bfK(\bfx(t))} \, \| {\bar\bfe}_\bfw^0 \|_{\star,\bfx^0}.
\end{align*}
%
%If we use \eqref{modified-Ah}, then we do not have $\bfvartheta$ and thus do not have $\| \doteu(t) \|_{\bfK(\bfx(t))}$ on the right-hand side.  
%
We thus obtain the following bounds for the defect terms:
	\begin{equation}
	\label{eq:i estimates defect terms}
		\begin{aligned}
			\ew^T \Ms\du \leq &\  c \|\ew\|_{\K} \|\du\|_{\star,\xs}, \\
			- \doteu^T \Ms\dw \leq &\ c  \|\doteu\|_{\K}\, \|\dw\|_{\star,\xs} , \\
		        \doteu^T \bfvartheta \le &\ c \| \doteu \|_{\K} \, \| {\bar\bfe}_\bfw^0 \|_{\star,\bfx^0}.
		\end{aligned}
	\end{equation}
	For the non-linear terms we use the following bounds: The two terms involving $\bff$ and $\bfg$ are bounded exactly as the (general, locally Lipschitz) non-linear term in Part~(A.v) in the proof of Proposition~7.1 in \cite{MCF} (using the fact that by the $W^{1,\infty}$ bound for the exact solution $u_h^\ast$ and the error $e_u$ in \eqref{eq:assumed bounds}, the numerical solution $u_h$ is also bounded in the $W^{1,\infty}$ norm) and the norm equivalence \eqref{norm-equiv}. We altogether have
\begin{equation}
\label{eq:i estimates for nonlinarities}
	\begin{aligned}
		\ew^T \big(\bff(\bfx,\bfu) - \bff(\xs,\us)\big) \leq &\ c  \|\ew\|_{\K}  \big( \|\ex\|_{\K} + \|\eu\|_{\K} \big) \\
		\doteu^T \big(\bfg(\bfx,\bfu) - \bfg(\xs,\us)\big) \leq &\ c \|\doteu\|_{\M} \big( \|\ex\|_{\K} + \|\eu\|_{\K} \big) .
	\end{aligned}
\end{equation}
	
%
%\begin{equation}
%\begin{aligned}
%			\ew^T \big(\bff(\bfx,\bfu) - \bff(\xs,\us)\big) \leq &\ c \|\ew\|_{\M} \big( \|\ex\|_{\K} + \|\eu\|_{\K} \big) .
%		\end{aligned}
%	\end{equation}
%
	
%	We remark here, that for the term with $\doteu$ it seems tempting to express it using the full derivative of the product $\eu^T \big(\bfg(\bfx,\bfu) - \bfg(\xs,\us)\big)$ (similarly as we will do later in \eqref{eq:iv estimates matrix differences dotew}, or as in Part~(A.iv) in the proof of Proposition~7.1 in \cite{MCF}), however, this would still lead to a bound depending on $\|\doteu\|_{\M}$.
%	
	
	For the term involving the state-dependent mass matrix $\bfF(\bfx,\bfu)$ we estimate by
	\begin{equation}
	\label{eq:i estimate for F - negative sign}
		\begin{aligned}
			&\ - \ew^T \big(\bfF(\bfx,\bfu)\bfw - \bfF(\xs,\us)\ws\big) \\
			= &\ - \ew^T \bfF(\bfx,\bfu)\ew - \ew^T \big(\bfF(\bfx,\bfu) - \bfF(\xs,\us)\big)\ws \\
			\leq &\ c_0  \|\ew\|_{\M}^2 + c \|\ew\|_{\M} \big( \|\ex\|_{\K} + \|\eu\|_{\K} \big) ,
		\end{aligned}
	\end{equation}
	where the first term in the middle line is estimated using \eqref{eq:alpha mass matrix - F} together with the $W^{1,\infty}$ boundedness of the numerical solutions $u_h$ (due to \eqref{eq:assumed bounds}), while the second term is again estimated using the above argument from \cite[Proposition~7.1, Part (A.v)]{MCF}, and using the norm equivalence \eqref{norm-equiv}. Here, $c_0$ is a positive constant.

	%We present this calculation here, starting by writing this term as
	%\begin{align*}
	%	\ew^T \big(\bfF(\bfx,\bfu) - \bfF(\xs,\us)\big)\ws 
	%	= &\ \int_{\Ga_h^1} F(u_h^1) w_h^{\ast,1} e_w^1 - \int_{\Ga_h^0} F(u_h^0) w_h^{\ast,0} e_w^0 \\
	%	= &\ \int_0^1 \frac{\d}{\d \theta} \int_{\Ga_h^\theta} F(u_h^\theta) w_h^{\ast,\theta} e_w^\theta \d \theta \\
	%	%%
	%%	= &\ \int_0^1 \int_{\Ga_h^\theta} (\pa_\theta^\bullet F(u_h^\theta)) w_h^{\ast,\theta} e_w^\theta \d \theta \\
	%%	&\ + \int_0^1 \int_{\Ga_h^\theta} F(u_h^\theta) (\pa_\theta^\bullet w_h^{\ast,\theta}) e_w^\theta \d \theta \\
	%%	&\ + \int_0^1 \int_{\Ga_h^\theta} F(u_h^\theta) w_h^{\ast,\theta e_w^\theta \Big( \nb_{\Ga_h^\theta} \cdot e_x^\theta \Big) \d \theta \\
	%	%%
	%%	= &\ \int_0^1 \int_{\Ga_h^\theta} F'(u_h^\theta)(\pa_\theta^\bullet u_h^\theta) w_h^{\ast,\theta e_w^\theta \d \theta \\
	%%	&\ + \int_0^1 \int_{\Ga_h^\theta} F(u_h^\theta) (\pa_\theta^\bullet w_h^{\ast,\theta}) e_w^\theta \d \theta \\
	%%	&\ + \int_0^1 \int_{\Ga_h^\theta} F(u_h^\theta) w_h^{\ast,\theta e_w^\theta \Big( \nb_{\Ga_h^\theta} \cdot e_x^\theta \Big) \d \theta \\
	%	%%
	%	= &\ \int_0^1 \int_{\Ga_h^\theta} F'(u_h^\theta)(e_u^\theta) w_h^{\ast,\theta} e_w^\theta \d \theta \\
	%	&\ + \int_0^1 \int_{\Ga_h^\theta} F(u_h^\theta) (\pa_\theta^\bullet w_h^{\ast,\theta}) e_w^\theta \d \theta \\
	%	&\ + \int_0^1 \int_{\Ga_h^\theta} F(u_h^\theta) w_h^{\ast,\theta} e_w^\theta \Big( \nb_{\Ga_h^\theta} \cdot e_x^\theta \Big) \d \theta
	%\end{align*}
	
	Altogether, the combination of the above bounds yields the first energy estimate
        \begin{align}
		\label{eq:energy estimate - i}
			\|\ew\|_{\A}^2 + \diff \|\eu\|_{\A}^2 
			%		\\
			\leq &\ c \|\eu\|_{\A}^2 + 2 c_0 \|\ew\|_{\M}^2 \\
		&\ + c \|\ew\|_{\K} \big( \|\ex\|_{\K} + \|\eu\|_{\K} +  \|\du\|_{\star,\xs} \big)  \nonumber \\	
			&\ + c \|\doteu\|_{\K} \big( \|\ex\|_{\K} + \|\eu\|_{\K} + \|\dw\|_{\star,\xs} + \| {\bar\bfe}_\bfw^0 \|_{\star,\bfx^0} \big) .
			\nonumber
%			\\
%			&\ - \diff \Big( \eu^T \big(\M - \Ms\big) \ws \Big)	
%		 - \diff \Big( \eu^T \big(\A - \As\big) \us \Big) \\
%			&\ - \diff \Big( \eu^T \big(\A - \As\big) \eu \Big) .
	\end{align}
%
%If we use \eqref{modified-Ah}, then we only have $\| \doteu(t) \|_{\bfM(\bfx(t))}$ on the right-hand side of \eqref{eq:energy estimate - i}.  
%

	\emph{Energy estimate (ii):} We test \eqref{eq:error eq - u} with $\eu$ and \eqref{eq:error eq - w} with $\ew$, then sum up to cancel the mixed terms $\eu^T \A \ew$, to obtain
	\begin{align*}
		\eu^T \M \doteu + \|\ew\|_{\M}^2   
		= &\ - \eu^T \big( \M-\Ms \big) \dotus 
		%	\\ &\ 
		+ \eu^T \big( \A-\As \big) \ws \\
		&\ + \eu^T \big(\bfF(\bfx,\bfu)\bfw - \bfF(\xs,\us)\ws\big) \\
		&\ + \eu^T \big(\bff(\bfx,\bfu) - \bff(\xs,\us)\big) 
		-\eu^T  \Ms\du , \\
		&\ - \ew^T \big( \M-\Ms \big) \ws 
		%	\\ &\ 
		- \ew^T \big( \A-\As \big) \us \\
		&\ + \ew^T \big(\bfg(\bfx,\bfu) - \bfg(\xs,\us)\big) 
		- \ew^T \Ms\dw  + \ew^T \bfvartheta . \\
	\end{align*}
	We estimate these terms by applying the same techniques as in (i): using \eqref{eq:i estimate lhs derivative} on the left-hand side, and on the right-hand side using \eqref{eq:i estimates matrix difference terms}, \eqref{eq:i estimates defect terms}, \eqref{eq:i estimates for nonlinarities}, and \eqref{eq:i estimate for F - negative sign}.
	%except for the term involving the state-dependent mass matrix $\bfF(\bfx,\bfu)$, for which we estimate by
	%\begin{equation}
	%\label{eq:ii estimate for F - 2}
	%	\begin{aligned}
	%		&\ \eu^T \big(\bfF(\bfx,\bfu)\bfw - \bfF(\xs,\us)\ws\big) \\
	%		= &\ \eu^T \bfF(\xs,\us)\ew + \eu^T \big(\bfF(\bfx,\bfu) - \bfF(\xs,\us)\big) \ew + \eu^T \big(\bfF(\bfx,\bfu) - \bfF(\xs,\us)\big)\ws \\
	%		\leq &\ c \|\eu\|_{\M}\|\ew\|_{\M} + c \|\eu\|_{\M} \big( \|\ex\|_{\K} + \|\eu\|_{\K} \big) ,
	%	\end{aligned}
	%\end{equation}
	%where for the estimate we used the $L^\infty$ boundedness of the error $e_w$ and the $W^{1,\infty}$ boundedness of the numerical solutions $u_h$ (due to \eqref{eq:assumed bounds}) and the estimates for the nonlinearity (we again refer to \cite[Proposition~7.1, Part (A.v)]{MCF}), and the norm equivalence \eqref{norm-equiv}.

We thus obtain the second energy estimate
	\begin{align}
	 	\label{eq:energy estimate - ii}
		\diff \|\eu\|_{\M}^2 + \|\ew\|_{\M}^2 \nonumber
		\leq &\ 
		c \|\eu\|_{\K} \big( \|\ex\|_{\K} + \|\eu\|_{\K} +  \|\du\|_{\star,\xs} \big) \\
		&\ + c \|\ew\|_{\K} \big( \|\ex\|_{\K} + \|\eu\|_{\K} + \|\dw\|_{\star,\xs} + \| {\bar\bfe}_\bfw^0 \|_{\star,\bfx^0} \big). 
		%		\\
		%		&\ - \frac12 \diff \Big( \eu^T \big(\M - \Ms\big) \eu \Big) .
	\end{align}	
	
	We now take the weighted linear combination of the two energy estimates \eqref{eq:energy estimate - i} and \eqref{eq:energy estimate - ii}, with weights $1$ and $4 c_0$, respectively, to obtain
	\begin{align*}
		&\ \diff \|\eu\|_{\A}^2 + 4 c_0 \diff \|\eu\|_{\M}^2 + \|\ew\|_{\A}^2 + 4 c_0 \|\ew\|_{\M}^2 \\ 
		\leq &\ c \|\eu\|_{\A}^2 + 2 c_0 \|\ew\|_{\M}^2 \\
		&\ + c \|\ew\|_{\K} \big( \|\ex\|_{\K} + \|\eu\|_{\K} +  
		 \|\du\|_{\star,\xs} +\|\dw\|_{\star,\xs} + \| {\bar\bfe}_\bfw^0 \|_{\star,\bfx^0}\big) \\	
		&\ + c \|\doteu\|_{\K} \big( \|\ex\|_{\K} + \|\eu\|_{\K} +  \|\dw\|_{\star,\xs} + \| {\bar\bfe}_\bfw^0 \|_{\star,\bfx^0} \big) \\
		&\ + c \|\eu\|_{\K} \big( \|\ex\|_{\K} + \|\eu\|_{\K} + \|\du\|_{\star,\xs} \big) .
	\end{align*}
	The term $\|\ew\|_{\M}^2$ is absorbed to the left-hand side.
	Then, using Young's inequality (often weighted with a small constant $\varrho > 0$ that can be chosen arbitrarily), by further absorptions to the left-hand side, and by collecting the terms, we obtain the estimate (with constants $c$ that depend on the choice of~$\varrho$)
	\begin{equation}
	\label{eq:energy estimate - i+ii}
		\begin{aligned}
		&	\diff \|\eu\|_{\A}^2 + 8 c_0 \diff \|\eu\|_{\M}^2 + c_0 \|\ew\|_{\K}^2 
			%%
%			\leq &\ c \|\eu\|_{\A}^2 \\
%			&\ + c \|\ew\|_{\K} \big( \|\ex\|_{\K} + \|\eu\|_{\K} + \|\du\|_{\Ms} \big) \\	
%			&\ + c \|\doteu\|_{\K} \big( \|\ex\|_{\K} + \|\eu\|_{\K} + \|\dw\|_{\Ms} \big) \\
%			&\ + c \|\eu\|_{\K} \big( \|\ex\|_{\K} + \|\eu\|_{\K} + \|\du\|_{\Ms} \big) \\
%			&\ + c \|\ew\|_{\K} \big( \|\ex\|_{\K} + \|\eu\|_{\K} + \|\dw\|_{\Ms} \big) \\
			%%
			\\
			&\leq  \varrho \|\doteu\|_{\K}^2 
			%	\\ &\ 
			+ c \big( \|\ex\|_{\K}^2 + \|\eu\|_{\K}^2 \big) 
			+ c \big( \|\du\|_{\star,\xs}^2 +\|\dw\|_{\star,\xs}^2 + \| {\bar\bfe}_\bfw^0 \|_{\star,\bfx^0}^2 \big) .
		\end{aligned}
	\end{equation}
	
	We now integrate the inequality \eqref{eq:energy estimate - i+ii} in time, divide by $\min\{1, c_0\}$, and use the norm equivalence \eqref{norm-equiv}, which altogether yields
	\begin{equation}
	\label{eq:energy estimate - i+ii - integrated}
		\begin{aligned}
			\|\eu\t\|_{\K(\xs\t)}^2 + &\ \int_0^t \|\ew\s\|_{\K(\xs\s)}^2 \d s \\
%			\|\eu\t\|_{\K(\xs\t)}^2 + \int_0^t \|\ew\s\|_{\K(\xs\s)}^2 \d s 
			\leq &\ \|\eu(0)\|_{\K(\xs(0))}^2 + \varrho \int_0^t \|\doteu\s\|_{\K(\xs\s)}^2 \d s \\ 
			&\ + c \int_0^t \big( \|\eu\s\|_{\K(\xs\s)}^2 + \|\ex\s\|_{\K(\xs\s)}^2 \big) \d s \\
			&\ + c \int_0^t \big( \|\du\s\|_{\star,\xs\s}^2 +\|\dw\s\|_{\star,\xs\s}^2 + \| {\bar\bfe}_\bfw^0 \|_{\star,\bfx^0}^2 \big) \d s .
		\end{aligned}
	\end{equation}
	%Any multiplicative constant $c$ is absorbed into the, not yet chosen, factor $\varrho > 0$.

%
%If we use \eqref{modified-Ah}, then we only have $\| \doteu(t) \|_{\bfM(\bfx(t))}$ on the right-hand side of \eqref{eq:energy estimate - i+ii - integrated}.  
%

	(A.2) To establish a bound for the critical term involving $\|\doteu\|_{\K}$ in \eqref{eq:energy estimate - i+ii - integrated}
	and to show a uniform-in-time bound for $\ew$ in the $\K$-norm, we perform a second pair of energy estimates.	
	We take the time derivative of equation \eqref{eq:error eq - w}:
	\begin{equation}
	\label{eq:error eq - w - diff}
		\begin{aligned}
			\M \dotew + \A \doteu 
			=&\ - {\dot \M}\ew - {\dot \A}\eu \\
			&\ - \big( \M-\Ms \big) \dotws - \diff \big( \M-\Ms \big) \ws \\ 
			&\ - \big( \A-\As \big) \dotus - \diff \big( \A-\As \big) \us \\ 
			&\ + \diff \big(\bfg(\bfx,\bfu) - \bfg(\xs,\us)\big)  \\
			&\ - \Ms\dotdw - {\dot \M}^*\dw .
		\end{aligned}
	\end{equation}
	
	\emph{Energy estimate (iii):} We test \eqref{eq:error eq - u} with $\doteu$ and \eqref{eq:error eq - w - diff} with $\ew$, then sum up to cancel the terms $\ew^T \A \doteu$, and obtain
	\begin{align*}
		\ew^T \M \dotew + \|\doteu\|_{\M}^2
		=&\ - \doteu^T \big( \M-\Ms \big) \dotus 
		%	\\ &\
		+ \doteu^T \big( \A-\As \big) \ws \\
		&\ + \doteu^T \big(\bfF(\bfx,\bfu)\bfw - \bfF(\xs,\us)\ws\big) \\
		&\ + \doteu^T \big(\bff(\bfx,\bfu) - \bff(\xs,\us)\big) - \doteu^T \Ms\du  \\
		&\ - \ew^T {\dot \M}\ew - \ew^T {\dot \A}\eu \\
		&\ - \ew^T \big( \M-\Ms \big) \dotws - \ew^T \diff \big( \M-\Ms \big) \ws \\ 
		&\ - \ew^T \big( \A-\As \big) \dotus - \ew^T \diff \big( \A-\As \big) \us \\ 
		&\ + \ew^T \diff \big(\bfg(\bfx,\bfu) - \bfg(\xs,\us)\big)  \\
		&\ - \ew^T \Ms\dotdw - \ew^T {\dot \M}^*\dw .
	\end{align*}
	
	We estimate these terms by applying the same techniques as in (i) and (ii): Using \eqref{eq:i estimate lhs derivative} on the left-hand side, and on the right-hand side using \eqref{eq:i estimates matrix difference terms}, \eqref{eq:i estimates defect terms}, and \eqref{eq:i estimates for nonlinarities} and \eqref{eq:i estimate for F - negative sign}. 
	
	The new terms involving time derivatives of the mass or stiffness matrix or their differences are bounded as follows.
	
	Using \eqref{matrix derivatives} (via the bounds \eqref{eq:assumed bounds}) we obtain
	\begin{equation}
	\label{eq:iii estimates time derivatives of matrices}
		\begin{aligned}
			- \ew^T {\dot \M}\ew \leq &\ c \|\ew\|_{\M}^2 \leq c \|\ew\|_{\K}^2 , \\
			- \ew^T {\dot \A}\eu \leq &\ c \|\ew\|_{\A} \|\eu\|_{\A} \leq c \|\ew\|_{\K} \|\eu\|_{\K} . 
		\end{aligned}
	\end{equation}
	Time derivatives of the differences of mass or stiffness matrices are bounded, exactly as in Part~(A.iv) in the proof of Proposition~7.1 in \cite{MCF}, by
	\begin{equation}
	\label{eq:iii estimates time derivatives of differences of matrices}
		\begin{aligned}
			- \ew^T \diff \big( \M-\Ms \big) \ws \leq &\ c \|\ew\|_{\K} \big( \|\ev\|_{\K} + \|\ex\|_{\K} \big) , \\ 
			- \ew^T \diff \big( \A-\As \big) \us \leq &\ c \|\ew\|_{\K} \big( \|\ev\|_{\K} + \|\ex\|_{\K} \big) . 
		\end{aligned}
	\end{equation}
	The defect term with a time derivative of the mass matrix is bounded using \eqref{matrix derivatives}, by
	\begin{equation}
	\label{eq:iii estimate time derivative of matrix with defect}
		\begin{aligned}
			\ew^T {\dot \M}^*\dw 
			\leq &\ c \|\ew\|_{\Ms} \|\dw\|_{\Ms} 
			\leq c \|\ew\|_{\M} \|\dw\|_{\Ms} .
		\end{aligned}
	\end{equation}
%The term containing the derivative of the nonlinearity $\bfg$ is bounded in Lemma~\ref{lem:f2} below by
The term containing the derivative of the nonlinearity $\bfg$ is estimated by techniques used in Part (A.v) of the proof of Proposition~7.1 in \cite{MCF}. A lengthy calculation yields%(see the Appendix for details):
\begin{equation}\label{est-dot-g}
\ew^T \diff \big(\bfg(\bfx,\bfu) - \bfg(\xs,\us)\big) \le c \|\ew\|_{\M} \Big(  \|\ex\|_{\K} + \|\ev\|_{\K}  + \|\eu\|_{\K}  + \|\doteu\|_{\K} \Big) .
\end{equation}
By combining the above estimates we obtain the third energy estimate
	\begin{equation}
	\label{eq:energy estimate - iii}
		\begin{aligned}
			&\ \Half \diff \|\ew\|_{\M}^2 + \Half \|\doteu\|_{\M}^2 \\
			\leq &\ c \|\doteu\|_{\K} \big( \|\ex\|_{\K} + \|\eu\|_{\K} + \|\ew\|_{\K}  \big) \\
			&\ + c \|\ew\|_{\K} \big(  \|\ex\|_{\K} + \|\ev\|_{\K} + \|\eu\|_{\K} + \|\doteu\|_{\K} + \|\ew\|_{\K} \big) \\
			&\ +c \|\doteu\|_{\K}\, \|\du\|_{\star,\xs}
			+ c \|\ew\|_{\K} \big( \|\dotdw\|_{\star,\xs} + \|\dw\|_{\Ms} \big) . 
		\end{aligned}
	\end{equation}
%
%(The following inequality was used in \eqref{eq:energy estimate - iii+iv - integrated}) \\
Integrating this inequality in time, we obtain with a constant $\varrho$ that is chosen suitably small, and with constants $c$ depending on the choice of $\varrho$,
	\begin{equation}
	\label{eq:energy estimate - iii - 2}
		\begin{aligned}
			&\ \Half \|\ew(t)\|_{\M(\bfx(t))}^2 + \Half \int_0^t \|\doteu(s) \|_{\M(\bfx(s))}^2 \d s \\
			\leq &\  
			\Half \|\ew(0)\|_{\M(\bfx(0))}^2 + \varrho \int_0^t \|\doteu(s)\|_{\K(\bfx(s))}^2 \d s \\
			& 
			+ c \int_0^t \big( \|\ex(s)\|_{\K(\bfx(s))}^2 + \|\ev(s)\|_{\K(\bfx(s))}^2   \big) \d s \\
			& 
			+ c \int_0^t \big(  \|\eu(s)\|_{\K(\bfx(s))}^2 + \|\ew(s)\|_{\K(\bfx(s))}^2 \big) \d s \\
			& 
			+ c \int_0^t \big(\|\du(s)\|_{\star,\xs}^2 + \|\dw(s)\|_{\Ms}^2 \big) \d s .
		\end{aligned}
	\end{equation}
%where $\epsilon$ can be an arbitrary positive constant. 
%

	\emph{Energy estimate (iv):} Finally, we test \eqref{eq:error eq - u} with $\dotew$ and \eqref{eq:error eq - w - diff} with $\doteu$, then subtract the former from the latter to obtain
	\begin{align*}
		\dotew^T \A \ew + \|\doteu\|_{\A}^2
		=&\ + \dotew^T \big( \M-\Ms \big) \dotus 
		- \dotew^T \big( \A-\As \big) \ws \\
		&\ - \dotew^T \big(\bfF(\bfx,\bfu)\bfw - \bfF(\xs,\us)\ws\big) \\
		&\ - \dotew^T \big(\bff(\bfx,\bfu) - \bff(\xs,\us)\big) 
		+ \dotew^T \Ms\du  \\
		&\ - \doteu^T {\dot \M}\ew - \doteu^T {\dot \A}\eu \\
		&\ - \doteu^T \big( \M-\Ms \big) \dotws - \doteu^T \diff \big( \M-\Ms \big) \ws \\ 
		&\ - \doteu^T \big( \A-\As \big) \dotus 
		%	\\	&\
		- \doteu^T \diff \big( \A-\As \big) \us \\ 
		&\ + \doteu^T \diff \big(\bfg(\bfx,\bfu) - \bfg(\xs,\us)\big)  \\
		&\ - \doteu^T \Ms\dotdw - \doteu^T {\dot \M}^*\dw .
	\end{align*}
	
	We bound these terms by applying the same techniques as in (iii) and (i)--(ii): We bound the terms using \eqref{eq:i estimate lhs derivative} on the left-hand side, and on the right-hand side all the terms that do not involve $\dotew$, using \eqref{eq:i estimates matrix difference terms}, \eqref{eq:i estimates defect terms}, and \eqref{eq:i estimates for nonlinarities} and \eqref{eq:i estimate for F - negative sign}. The terms involving time derivatives of $\M$ and $\A$ are bounded using \eqref{eq:iii estimates time derivatives of matrices}--
%	, \eqref{eq:iii estimates time derivatives of differences of matrices}, 
	\eqref{eq:iii estimate time derivative of matrix with defect}. 
	As in \eqref{est-dot-g} we have
	\begin{equation}
	\label{eq:iv estimate for the tricky term}
		\begin{aligned}
			\doteu^T \diff \big(\bfg(\bfx,\bfu) - \bfg(\xs,\us)\big) \leq &\ c \|\doteu\|_{\M} \Big(  \|\ex\|_{\K} + \|\ev\|_{\K}  + \|\eu\|_{\K}  + \|\doteu\|_{\K} \Big) .
		\end{aligned}
	\end{equation}
		The critical term $c \|\doteu\|_{\M} \|\doteu\|_{\K}$  is bounded, using Young's inequality and recalling that $\K=\M+\A$, as follows:
	\begin{equation}
	\label{eq:iv tricky term bound}
		\begin{aligned}
			c \|\doteu\|_{\M} \|\doteu\|_{\K} 
			= &\ c \|\doteu\|_{\M} \|\doteu\|_{\M+\A} \\
			\leq &\ c \|\doteu\|_{\M}^2 + c \|\doteu\|_{\M} \|\doteu\|_{\A} \\
			\leq &\ c_1 \|\doteu\|_{\M}^2 + \half \|\doteu\|_{\A}^2 , \qquad \text{with a constant } c_1 > 0 .
		\end{aligned}
	\end{equation}

	The terms involving $\dotew$ are rewritten and bounded by the following trick, cf.~\cite[equation (7.23)]{MCF}, and then bounded using \eqref{matrix difference bounds e_x}, see the arguments to \eqref{eq:i estimates matrix difference terms} and \eqref{eq:iii estimates time derivatives of differences of matrices} (cf.~Part~(A.iv) in \cite{MCF}):
	\begin{equation}
	\label{eq:iv estimates matrix differences dotew}
		\begin{aligned}
			\dotew^T \big( \M-\Ms \big) \dotus = &\ \diff \Big( \ew^T \big( \M-\Ms \big) \dotus \Big) \\
			&\ - \ew^T \diff \big( \M-\Ms \big) \dotus - \ew^T \big( \M-\Ms \big) \ddotus \\
			\leq &\ \diff \Big( \ew^T \big( \M-\Ms \big) \dotus \Big) + c \|\ew\|_{\K} \big( \|\ex\|_{\K} + \|\ev\|_{\K} \big) , \\
			- \dotew^T \big( \A-\As \big) \ws = &\ - \diff \Big( \ew^T \big( \A-\As \big) \ws \Big) \\
			&\ + \ew^T \diff \big( \A-\As \big) \ws + \ew^T \big( \A-\As \big) \dotws \\
			\leq &\ - \diff \Big( \ew^T \big( \A-\As \big) \ws \Big) + c \|\ew\|_{\K} \big( \|\ex\|_{\K} + \|\ev\|_{\K} \big) .
		\end{aligned}
	\end{equation}
Analogously, for the non-linear terms we obtain
%	\begin{align*}
%		&\ -\dotew^T \big(\bfF(\bfx,\bfu)\bfw - \bfF(\xs,\us)\ws\big) \\
%		= &\ -\diff\Big( \ew^T \big(\bfF(\bfx,\bfu)\bfw - \bfF(\xs,\us)\ws\big) \Big) \\
%		&\ + \ew^T \diff \big(\bfF(\bfx,\bfu)\bfw - \bfF(\xs,\us)\ws\big) \\
%		\leq &\ -\diff\Big( \ew^T \big(\bfF(\bfx,\bfu)\bfw - \bfF(\xs,\us)\ws\big) \Big) \\
%		&\ + c \|\ew\|_{\M} \Big(  \|\ex\|_{\K} + \|\ev\|_{\K}  + \|\eu\|_{\K}  + \|\doteu\|_{\K} + \|\ew\|_{\K} \Big) ,
%	\end{align*}
%
%In the last inequality the term $\|\ew\|_{\K}$ should be replaced by $\|\dotew\|_{\M}$. The term $\|\dotew\|_{\M}$ may be avoided if we change the estimation to the following one: 
\begin{align*}
		&\ -\dotew^T \big(\bfF(\bfx,\bfu)\bfw - \bfF(\xs,\us)\ws\big) \\
		= &\ -\dotew^T  \bfF(\bfx,\bfu)\ew -\dotew^T   \big(\bfF(\bfx,\bfu)- \bfF(\xs,\us)\big)\ws  \\
		= &\ - \diff\Big( \frac12 \ew^T \bfF(\bfx,\bfu)\ew + \ew^T  \big(\bfF(\bfx,\bfu)- \bfF(\xs,\us)\big)\ws \Big) \\
		&\ +\frac12 \ew^T \diff \bfF(\bfx,\bfu)\ew + \ew^T \diff \big(\bfF(\bfx,\bfu)- \bfF(\xs,\us)\big)\ws \big) \\
		\leq &\ -\diff\Big( \frac12 \ew^T \bfF(\bfx,\bfu)\ew + \ew^T  \big(\bfF(\bfx,\bfu)- \bfF(\xs,\us)\big)\ws \Big) \\
		&\ + c \|\ew\|_{\M} \Big(  \|\ex\|_{\K} + \|\ev\|_{\K}  + \|\eu\|_{\K}  + \|\doteu\|_{\K} + \|\ew\|_{\M} \Big) ,
\end{align*}
	where the inequality for the third term is shown  as in \eqref{est-dot-g}. 
	Similarly, we have for the other term
	\begin{align*}
		-\dotew^T \big(\bff(\bfx,\bfu) - \bff(\xs,\us)\big)
		%	= &\ -\diff\Big( \ew^T \big(\bff(\bfx,\bfu) - \bff(\xs,\us)\big) \Big) \\
		%	&\ + \ew^T \diff \big(\bff(\bfx,\bfu) - \bff(\xs,\us)\big) \\
		\leq &\ -\diff\Big( \ew^T \big(\bff(\bfx,\bfu) - \bff(\xs,\us)\big) \Big) \\
		&\ + c \|\ew\|_{\K}  \Big(  \|\ex\|_{\K} + \|\ev\|_{\K} + \|\eu\|_{\K}  + \|\doteu\|_{\K} \Big) .
	\end{align*}

	Finally, the defect term is bounded 	using \eqref{eq:iii estimate time derivative of matrix with defect},
	\begin{align*}
		\dotew^T \Ms\du = &\ \diff \Big( \ew^T \Ms \du  \Big) - \ew^T \Ms \dotdu - \ew^T {\dot \M}^* \du \\
		\leq &\ \diff \Big( \ew^T \Ms \du  \Big) + c \|\ew\|_{\K} \big( \|\dotdu\|_{\star,\xs} + \|\du\|_{\Ms} \big) .
	\end{align*}	
	Then the combination of the bounds above, and an absorption to the left-hand side of the last term in \eqref{eq:iv tricky term bound}, yields the fourth energy estimate
	\begin{equation}
	\label{eq:energy estimate - iv} 
		\begin{aligned}
			&\ \Half \diff \|\ew\|_{\A}^2 + \Half \|\doteu\|_{\A}^2 \\
			\leq &\ c_1 \|\doteu\|_{\M}^2 \\
			&\ + c \|\ew\|_{\K} \Big(   \|\ex\|_{\K} + \|\ev\|_{\K} +  \|\eu\|_{\K} + \|\doteu\|_{\K} + \|\ew\|_{\K} \Big) \\
			&\ + c \|\doteu\|_{\K} \Big(  \|\ex\|_{\K} + \|\ev\|_{\K} + \|\eu\|_{\K} + \|\ew\|_{\K} \Big) \\
			&\ + c \|\ew\|_{\K} \big( \|\du\|_{\Ms} +  \|\dotdu\|_{\star,\xs}  \big) 
			%		\\ &\
			+ c \|\doteu\|_{\K}\big(  \|\dw\|_{\Ms} + \|\dotdw\|_{\star,\xs} \big) \\
			&\ + \diff \Big( \ew^T \big( \M-\Ms \big) \dotus \Big) 
			- \diff \Big( \ew^T \big( \M-\Ms \big) \ws \Big) 
			+ \diff \Big( \ew^T \Ms \du  \Big) \\
			&\  - \diff \Big( \frac12 \ew^T \bfF(\bfx,\bfu)\ew + \ew^T  \big(\bfF(\bfx,\bfu)- \bfF(\xs,\us)\big)\ws \Big) \\
			&\ -\diff\Big( \ew^T \big(\bff(\bfx,\bfu) - \bff(\xs,\us)\big) \Big) .
		\end{aligned}
	\end{equation}
	
	Now we take the weighted sum of the the two energy estimates from (iii) and (iv), i.e.~we sum $4 c_1$ times \eqref{eq:energy estimate - iii} and \eqref{eq:energy estimate - iv}. Then we absorb the term $c_1 \|\eu\|_{\M}^2$ to the left-hand side.
%	, and multiply both sides with the minimum of $c_1$ and $1/2$ (cannot normalize the coefficient, because a nonnegative term inside $\diff$ cannot be dropped) . 
	Using Young's inequality (often with a small parameter), absorptions to the left-hand side, and collecting the terms, we obtain 
	$$%\begin{equation}
	%\label{eq:energy estimate - iii+iv}
		\begin{aligned}
			&\ 
			\diff \big(2 c_1 \|\ew\|_{\M}^2 + \Half  \|\ew\|_{\A}^2\big)  + c_1 \|\doteu\|_{\M}^2  + \Half \|\doteu\|_{\A}^2
		         \\
			\leq &\ c \|\eu\|_{\K}^2 + c \|\ex\|_{\K}^2 + c \|\ev\|_{\K}^2 + c \|\ew\|_{\K}^2 \\
			&\ + c \|\du\|_{\Ms}^2 + c \|\dotdu\|_{\star,\xs}^2 + c \|\dw\|_{\Ms}^2 + c\|\dotdw\|_{\star,\xs}^2 \\
			&\ + c \diff \Big( \ew^T \big( \M-\Ms \big) \dotus \Big) 
			- c \diff \Big( \ew^T \big( \M-\Ms \big) \dotws \Big) 
			+ c \diff \Big( \ew^T \Ms \du  \Big) \\
			&\ - c \diff \Big( \frac12 \ew^T \bfF(\bfx,\bfu)\ew + \ew^T  \big(\bfF(\bfx,\bfu)- \bfF(\xs,\us)\big)\ws \Big) \\
			&\ - c \diff \Big( \ew^T \big(\bff(\bfx,\bfu) - \bff(\xs,\us)\big) \Big) .
		\end{aligned}
	$$%\end{equation}
We integrate this inequality %\eqref{eq:energy estimate - iii+iv} 
in time, to obtain
	\begin{equation*}
		\begin{aligned}
			&\ \|\ew\t\|_{\K(\bfx\t)}^2 + \int_0^t \|\doteu\s\|_{\K(\bfx\s)}^2 \d s \\
			\leq &\ c \,\|\ew(0)\|_{\K(\bfx(0))}^2 \\
			&\ + c \! \int_0^t \!\! \big(  \|\ex\s\|_{\K(\bfx\s)}^2 + \|\ev\s\|_{\K(\bfx\s)}^2 + \|\eu\s\|_{\K(\bfx\s)}^2 + \|\ew\s\|_{\K(\bfx\s)}^2 \big) \d s \\
			&\ + c \! \int_0^t \!\! \big( \|\du\s\|_{\bfM(\bfx\s)}^2 + \|\dotdu\s\|_{\star,\bfx\s}^2 + \|\dw\s\|_{\bfM(\bfx\s)}^2 + c\|\dotdw\s\|_{\star,\bfx\s}^2 \big) \d s \\
			&\ + c \, \ew\t^T \Big( \M(\bfx\t) - \M(\xs\t) \Big) \dotus\t \\
			&\ - c \, \ew\t^T \Big( \M(\bfx\t) - \M(\xs\t) \Big) \dotws\t  + c \, \ew\t^T \M(\bfx\t) \du\t \\
%			&\ - c \, \ew\t^T \Big(\bfF(\bfx\t,\bfu\t)\bfw\t - \bfF(\bfx\t,\us\t)\ws\t\Big) \\
			&\ - c \, \ew\t^T \bfF(\bfx\t,\bfu\t)\ew\t - c \, \ew\t^T  \big(\bfF(\bfx\t,\bfu\t)- \bfF(\xs\t,\us\t)\big)\ws\t \\
			&\ + c \, \ew\t^T \Big(\bff(\bfx\t,\bfu\t) - \bff(\bfx\t,\us\t)\Big) .
		\end{aligned}
	\end{equation*}
We estimate those terms that are not integrated in time using \eqref{matrix difference bounds} (with \eqref{eq:assumed bounds}), and for the non-linear terms we use \eqref{eq:i estimate for F - negative sign} and \eqref{eq:i estimates for nonlinarities}. Then using Young's inequality (possibly with a sufficiently small constant $\varrho > 0$) and absorbing terms to the left-hand side, we obtain
	\begin{equation*}
%	\label{eq:energy estimate - iii+iv - integrated}
		\begin{aligned}
			&\ \|\ew\t\|_{\K(\bfx\t)}^2 + \int_0^t \|\doteu\s\|_{\K(\bfx\s)}^2 \d s \\
			%%
%			&\ \Half \|\ew\t\|_{\K(\bfx\t)}^2 + \Half \int_0^t \|\doteu\s\|_{\K(\bfx\s)}^2 \d s \\
%			\leq &\ \|\ew(0)\|_{\K(\bfx(0))}^2 \\
%			&\ + c \! \int_0^t \!\! \big( \|\ex\s\|_{\K(\bfx\s)}^2 + \|\eu\s\|_{\K(\bfx\s)}^2 + \|\ev\s\|_{\K(\bfx\s)}^2 + \|\ew\s\|_{\K(\bfx\s)}^2 \big) \d s \\
%			&\ + c \! \int_0^t \!\! \big( \|\du\s\|_{\bfM(\bfx\s)}^2 + \|\dotdu\s\|_{\bfM(\bfx\s)}^2 + \|\dw\s\|_{\bfM(\bfx\s)}^2 + c\|\dotdw\s\|_{\bfM(\bfx\s)}^2 \big) \d s \\
%			%
%			&\ + c \|\ew\t\|_{\K(\bfx\t)} \|\ex\t\|_{\K(\bfx\t)} \\
%			&\ + c \|\ew\t\|_{\K(\bfx\t)} \|\ex\t\|_{\K(\bfx\t)}  + c \|\ew\t\|_{\K(\bfx\t)} \|\du\t\|_{\star,\xs\t} \\
%			&\ + c \|\ew\t\|_{\K(\bfx\t)} \big(\|\ex\t\|_{\K(\bfx\t)} + \|\eu\t\|_{\K(\bfx\t)} \big) \\
%			&\ + c \|\ew\t\|_{\K(\bfx\t)} \big(\|\ex\t\|_{\K(\bfx\t)} + \|\eu\t\|_{\K(\bfx\t)} \big) \\
			%%
			\leq &\ c_2 \|\eu\t\|_{\K(\bfx\t)}^2 + c \|\ex\t\|_{\K(\bfx\t)}^2 
			 + c \|\ew\t\|_{\M(\bfx\t)}^2  \\
			&\ + c \! \int_0^t \!\! \big( \|\ex\s\|_{\K(\bfx\s)}^2 + \|\ev\s\|_{\K(\bfx\s)}^2 + \|\eu\s\|_{\K(\bfx\s)}^2 + \|\ew\s\|_{\K(\bfx\s)}^2 \big) \d s \\
			&\ + c \! \int_0^t \!\! \big( \|\du\s\|_{\bfM(\bfx\s)}^2 + \|\dotdu\s\|_{\star,\bfx\s}^2 + \|\dw\s\|_{\bfM(\bfx\s)}^2 + c\|\dotdw\s\|_{\star,\bfx\s}^2 \big) \d s \\
			&\  + c \|\du\t\|_{\star,\bfx\t}^2 +  c\|\ew(0)\|_{\K(\bfx(0))}^2,  \qquad \qquad \text{with a constant } c_2 > 0 .
		\end{aligned}
	\end{equation*}
The term $\|\ew\t\|_{\M(\bfx\t)}^2 $ can be estimated by using \eqref{eq:energy estimate - iii - 2}. Then we have 
	\begin{equation}
	\label{eq:energy estimate - iii+iv - integrated}
		\begin{aligned}
			&\ \|\ew\t\|_{\K(\bfx\t)}^2 + \frac12 \int_0^t \|\doteu\s\|_{\K(\bfx\s)}^2 \d s \\
			%%
%			&\ \Half \|\ew\t\|_{\K(\bfx\t)}^2 + \Half \int_0^t \|\doteu\s\|_{\K(\bfx\s)}^2 \d s \\
%			\leq &\ \|\ew(0)\|_{\K(\bfx(0))}^2 \\
%			&\ + c \! \int_0^t \!\! \big( \|\ex\s\|_{\K(\bfx\s)}^2 + \|\eu\s\|_{\K(\bfx\s)}^2 + \|\ev\s\|_{\K(\bfx\s)}^2 + \|\ew\s\|_{\K(\bfx\s)}^2 \big) \d s \\
%			&\ + c \! \int_0^t \!\! \big( \|\du\s\|_{\bfM(\bfx\s)}^2 + \|\dotdu\s\|_{\bfM(\bfx\s)}^2 + \|\dw\s\|_{\bfM(\bfx\s)}^2 + c\|\dotdw\s\|_{\bfM(\bfx\s)}^2 \big) \d s \\
%			%
%			&\ + c \|\ew\t\|_{\K(\bfx\t)} \|\ex\t\|_{\K(\bfx\t)} \\
%			&\ + c \|\ew\t\|_{\K(\bfx\t)} \|\ex\t\|_{\K(\bfx\t)}  + c \|\ew\t\|_{\K(\bfx\t)} \|\du\t\|_{\star,\xs\t} \\
%			&\ + c \|\ew\t\|_{\K(\bfx\t)} \big(\|\ex\t\|_{\K(\bfx\t)} + \|\eu\t\|_{\K(\bfx\t)} \big) \\
%			&\ + c \|\ew\t\|_{\K(\bfx\t)} \big(\|\ex\t\|_{\K(\bfx\t)} + \|\eu\t\|_{\K(\bfx\t)} \big) \\
			%%
			\leq &\ c_2 \|\eu\t\|_{\K(\bfx\t)}^2 + c \|\ex\t\|_{\K(\bfx\t)}^2  \\
			&\ + c \! \int_0^t \!\! \big( \|\ex\s\|_{\K(\bfx\s)}^2 + \|\ev\s\|_{\K(\bfx\s)}^2 + \|\eu\s\|_{\K(\bfx\s)}^2 + \|\ew\s\|_{\K(\bfx\s)}^2 \big) \d s \\
			&\ + c \! \int_0^t \!\! \big( \|\du\s\|_{\bfM(\bfx\s)}^2 + \|\dotdu\s\|_{\star,\bfx\s}^2 + \|\dw\s\|_{\bfM(\bfx\s)}^2 + c\|\dotdw\s\|_{\star,\bfx\s}^2 \big) \d s \\
			&\  + c \|\du\t\|_{\star,\bfx\t}^2 +  c\|\ew(0)\|_{\K(\bfx(0))}^2 .
		\end{aligned}
	\end{equation}
	
	(A.3) \emph{Combining the energy estimates (i)--(iv):} We multiply \eqref{eq:energy estimate - i+ii - integrated} with $2 c_2$ and sum with \eqref{eq:energy estimate - iii+iv - integrated}, absorb the terms (by choosing $\varrho > 0$ in \eqref{eq:energy estimate - i+ii - integrated} sufficiently small) with $\|\doteu\|_{\K}$ and $\|\eu\|_{\K}$ to the left-hand side, to obtain
	\begin{equation}
	\label{eq:energy estimates - final}
		\begin{aligned}
			&\ \|\eu\t\|_{\K(\bfx\t)}^2 + \|\ew\t\|_{\K(\bfx\t)}^2 
			%		\\ &\
			%+ \int_0^t \!\! \|\doteu\s\|_{\K(\bfx\s)}^2 \d s %+ \int_0^t \!\! \|\ew\s\|_{\K(\bfx\s)}^2 \d s  
			 \\
			%%%%
			\leq &\ c \|\ex\t\|_{\K(\bfx\t)}^2  \\
			&\ + c \! \int_0^t \!\! \big( \|\ex\s\|_{\K}^2 + \|\ev\s\|_{\K(\bfx\s)}^2 + \|\eu\s\|_{\K(\bfx\s)}^2  + \|\ew\s\|_{\K}^2 \big) \d s \\
			&\ + c \! \int_0^t \!\! \big( \|\du\s\|_{\bfM(\bfx\s)}^2 + \|\dotdu\s\|_{\star,\bfx\s}^2 + \|\dw\s\|_{\bfM(\bfx\s)}^2 + c\|\dotdw\s\|_{\star,\bfx\s}^2 \big) \d s \\
			&\ + c \|\du\t\|_{\star,\bfx\t}^2  + c \|\eu(0)\|_{\K(\bfx(0))}^2 + c \|\ew(0)\|_{\K(\bfx(0))}^2 +  c\| {\bar\bfe}_\bfw(0) \|_{\star,\bfx(0)}^2. 
		\end{aligned}
	\end{equation}

	(B) {\it Estimates for the velocity equation:} 
	We write $\ev$ as the nodal vector of the finite element function $e_v\in S_h[\xs]$. 
	To obtain an expression for the function $e_v$, we denote by $\widetilde I_h^\ast$ the finite element interpolation operator on $\Ga_h[\xs]$, and we set
	\begin{align*}
	V_h^\theta &= \sum_{j=1}^N V_j \, \phi_j[\bfx + \theta(\xs -\bfx)], \quad\
	\nu_h^\theta = \sum_{j=1}^N \nu_j \, \phi_j[\bfx + \theta(\xs -\bfx)].
	\end{align*}
	In view of \eqref{eq:error eq - v}, we then have
	\begin{align*}
	e_v &= \widetilde I_h^\ast (V_h^1 \nu_h^1) - \widetilde I_h^\ast (V_h^* \nu_h^*) - d_v 
	\\
	&= \widetilde I_h^\ast((V_h^1-V_h^*) \nu_h^1) + \widetilde I_h^\ast (V_h^* (\nu_h^1-\nu_h^*)) -d_v.
	\end{align*}
	Lemma~\ref{lemma:interpolation} gives us the bound
	\begin{align*}
	\| e_v \|_{H^1(\Ga_h[\xs])}  &\le C\, \| V_h^1-V_h^* \|_{H^1(\Ga_h[\xs])} \| \nu_h^1 \|_{W^{1,\infty}(\Ga_h[\xs])} 
	\\
	&\ +
	C\, \| \nu_h^1-\nu_h^* \|_{H^1(\Ga_h[\xs])} \| V_h^* \|_{W^{1,\infty}(\Ga_h[\xs])} + \| d_v \|_{H^1(\Ga_h[\xs])}.
	\end{align*}
	The $W^{1,\infty}$ norms of $V_h^*$ and $\nu_h^*$ are bounded independently of $h$ by assumption, and that of $\nu_h^1=\nu_h^*+e_\nu$ by \eqref{eq:assumed bounds}. Since the nodal vector of $V_h^1-V_h^*$ is the subvector of the first $N$ components of $\ew$ and the nodal vector of 
	$\nu_h^1-\nu_h^*$ is the subvector of the last $3N$ components of $\eu$, the above bound yields
	\begin{equation} \label{eq:velocity estimate}
	\| \ev \|_{\K(\xs)} \le c\, \bigl( \| \eu \|_{\K(\xs)} + \| \ew \|_{\K(\xs)} \bigr) + \| \dv \|_{\K(\xs)}.
	\end{equation}
 	
	(C) {\it Combination:} We use the differential equation $\dotex = \ev$ \eqref{eq:error eq - x} to show the bound
	\begin{align*}
		\|\ex\t\|_{\K(\xs\t)}^2 = &\ \int_0^t \Half \frac{\d}{\d s} \|\ex\s\|_{\K(\xs\s)}^2 \d s \\
		%	=&\ \int_0^t 2 \ex(s)^T \bfK(\bfx) \dotex(s) + \ex(s)^T \diff\big(\bfK(\bfx)\big) \ex(s) \ \d s \\
		\leq &\ c \int_0^t \|\ev\s\|_{\K(\xs\s)}^2 \d s + c \int_0^t \|\ex\s\|_{\K(\xs\s)}^2 \d s  ,
	\end{align*}
	which is substituted into %the estimates \eqref{eq:velocity estimate} and 
	\eqref{eq:energy estimates - final}. This combined estimate is  plugged into \eqref{eq:velocity estimate}, using
	 the equivalence \eqref{norm-equiv} between the norms $\|\cdot\|_{\K(\bfx\t)}$ and $\|\cdot\|_{\K(\xs\t)}$.
	 Then we sum the three inequalities  and obtain
	\begin{equation*} 
	%\label{eq:pre Gronwall}
		\begin{aligned}
			& \|\ex\t\|_{\K(\xs\t)}^2 + \|\ev\t\|_{\K(\xs\t)}^2 + \|\eu\t\|_{\K(\xs\t)}^2  + \|\ew\t\|_{\K(\xs\t)}^2 \\
			\leq &\ + c \! \int_0^t \!\! \big( \|\ex\s\|_{\K(\xs\s)}^2 + \|\ev\s\|_{\K(\xs\s)}^2 + \|\eu\s\|_{\K(\xs\s)}^2  + \|\ew\s\|_{\K(\xs\s)}^2 \big) \d s \\
			&\ + c \! \int_0^t \!\! \big( \|\du\s\|_{\bfM(\xs\s)}^2 + \|\dotdu\s\|_{\star,\xs\s}^2 + \|\dw\s\|_{\bfM(\xs\s)}^2 + c\|\dotdw\s\|_{\star,\xs\s}^2 \big) \d s \\
			&\ +  c \|\dv\t\|_{\K(\xs\t)}^2  + c \|\du\t\|_{\star,\xs\t}^2 
			\\ &\
			+ c \|\eu(0)\|_{\K(\xs(0))}^2 + c \|\ew(0)\|_{\K(\xs(0))}^2   + c\| {\bar\bfe}_\bfw(0) \|_{\star,\xs(0)}^2.
		\end{aligned}
	\end{equation*}
By Gronwall's inequality we obtain the stability bound \eqref{eq:stability bound} for $t \in [0,t^*]$.
	
	\medskip 
	Finally it remains to show that $t^* = T$ for $h$ sufficiently small. 
	To this end we use the assumed defect bounds to obtain the error estimates of order~$\kappa$:
	\begin{align*} 
		\normK{\ex(t)} + \normK{\ev(t)} + \normK{\eu(t)} + \normK{\ew(t)} \leq C h^{\kappa}.
	\end{align*}
	Then, by the inverse inequality, we have for $t\in[0,t^*]$ 
	\begin{equation*} 
		\begin{aligned}
			&\ \|e_x(\cdot,t)\|_{W^{1,\infty}(\Gamma_h[\xs\t])}
			+ \|e_v(\cdot,t)\|_{W^{1,\infty}(\Gamma_h[\xs\t])} \\
			&\ + \|e_u(\cdot,t)\|_{W^{1,\infty}(\Gamma_h[\xs\t])}
			+ \|e_w(\cdot,t)\|_{W^{1,\infty}(\Gamma_h[\xs\t])} \\
			\leq &\ ch^{-1}
			\big(\normK{\ex(t)} + \normK{\ev(t)} + \normK{\eu(t)} + \normK{\ew(t)} \big) \\ 
			\leq &\
			c C h^{\kappa-1} \leq \tfrac12 h^{(\kappa-1)/2} ,
		\end{aligned}
	\end{equation*}
	for sufficiently small $h$.
	Hence we can extend the bounds \eqref{eq:assumed bounds} beyond $t^*$, which contradicts the maximality of $t^*$ unless $t^*=T$. Therefore we have the stability bound \eqref{eq:stability bound} for $t\in[0,T]$. 
	\qed	
\end{proof}

\section{Consistency}
\label{section:Defect}

In this section we show that the defects and the errors in the initial values can be bounded by $O(h^k)$ in the appropriate norms that appear in the stability bounds of Proposition~\ref{proposition:stability}. Together with the argument of \cite[Section~9]{MCF}, this completes the proof of
Theorem~\ref{MainTHM}.
We first state the bounds in Subsection~\ref{subsec:consistency errors} and then turn to the essentials of their proof in the following subsections.

\subsection{Defect bounds and initial value error bounds}
\label{subsec:consistency errors}

As before, we let the vectors $\xs(t)\in\R^{3N}$ and $\vs(t)\in\R^{3N}$ collect the positions $X(p_i,t)$ of the moving finite element nodes on the exact surface $\Ga(t)=\Ga[X(\cdot,t)]$ and their velocities $v(X(p_i,t),t)$, respectively. They are the nodal vectors of finite element functions $X_h^*(\cdot,t)\in S_h[\Ga^0]^3$ and $v_h^*(\cdot,t) \in S_h(\Gamma_h[\xs(t)])^3$ on the interpolated surface $\Ga_h^*(t)=\Ga_h[\xs(t)]=\Ga[X_h^*(\cdot,t)]$, which moves with the velocity $v_h^*(x,t)=\tfrac{\d}{\d  t} X_h^*(p,t)$ for $x=X_h^*(p,t)$. We write
$$
\partial_{h}^\bullet f (x,t) = \partial_x f(x,t) v_h^*(x,t) + \partial_t f(x,t) , \qquad x\in \Ga_h^*(t),
$$
for the material derivative on $\Ga_h^*(t)$ corresponding to the velocity $v_h^*$. (This is not the same as the material derivative on the discrete surface $\Ga_h[\bfx(t)]$ with velocity $v_h$, although we use the same symbol $\partial_h^\bullet$.)

We further need reference finite element functions  
$$
u_h^*(\cdot,t)= \begin{pmatrix} H_h^\ast(t) \\ \nu_h^\ast(t) \end{pmatrix}\in S_h[\xs(t)]^{1+3},
\ \
w_h^*(\cdot,t)= \begin{pmatrix} V_h^\ast(t) \\ z_h^\ast(t) \end{pmatrix}\in S_h[\xs(t)]^{1+3}
$$ 
with their nodal vectors 
\begin{align*}
%&\xs(t)\in\R^{3N},\ \ \vs(t)\in\R^{3N},\\
%&
\us(t)=\begin{pmatrix} \bfH^\ast(t) \\ \mathbf{n}^\ast(t) \end{pmatrix} \in\R^{N+3N}, \quad
\ws(t)=\begin{pmatrix} \bfV^\ast(t) \\ \mathbf{z}^\ast(t) \end{pmatrix} \in\R^{N+3N}.
\end{align*}

\begin{proposition}
\label{prop:defect-bounds}
Under the assumptions of Theorem~\ref{MainTHM}, there exist, for $t\in[0,T]$, finite element functions $u_h^*(\cdot,t),w_h^*(\cdot,t)\in S_h[\xs(t)]^4$ such that the following holds true:

(a) The lift to the exact surface $\Ga(t)$ of these functions is $O(h^k)$ close to the exact solution in the $H^1$-norm:
\begin{equation}\label{us-ws-error bound}
   \begin{aligned}
      \| (u_h^*)^l(\cdot,t) - u(\cdot,t) \|_{H^1(\Ga(t))} &\le C\, h^k, 
      \\
      \| (w_h^*)^l(\cdot,t) - w(\cdot,t) \|_{H^1(\Ga(t))} &\le C\, h^k.
\end{aligned}
\end{equation}

(b) The defect functions $d_v(\cdot,t)\in S_h[\xs(t)]^3$ and $d_u(\cdot,t),d_w(\cdot,t)\in S_h[\xs(t)]^4$ with nodal vectors
$\bfd_\bfv(t)$ and $\bfd_\bfu(t)$, $\bfd_\bfw(t)$, respectively, which are defined by \eqref{eq:matrix-form-X-v-star}--\eqref{defect vectors}, are bounded by
	\begin{equation}
	\label{eq:defect bounds}
		\begin{aligned}
	\| d_v(\cdot,t) \|_{H^1(\Ga_h^*(t))} = \|\dv(t)\|_{\bfK(\xs(t))}  \leq &\ C h^k , \\
	\| d_u(\cdot,t) \|_{L^2(\Ga_h^*(t))}  =
			\|\du(t)\|_{\bfM(\xs(t))}   \leq &\ C h^k , \\
	\| d_w(\cdot,t) \|_{L^2(\Ga_h^*(t))}	=	\|\dw(t)\|_{\bfM(\xs(t))}  \leq &\ C h^k , \\
	\| \partial_{h}^\bullet d_u(\cdot,t) \|_{H^{-1}_h(\Ga_h^*(t))}	=	\|\dotdu(t)\|_{\star,\xs(t)}  \leq &\ C h^k, \\
	\| \partial_{h}^\bullet d_w(\cdot,t) \|_{H^{-1}_h(\Ga_h^*(t))}	=      \|\dotdw(t)\|_{\star,\xs(t)}  \leq &\ C h^k.
		%	\|\dx(t)\|_{\bfM(\xs(t))} \leq &\ c h^\kappa ,
		\end{aligned}
	\end{equation}
The constants $C$ are independent of $h$ and $t\in[0,T]$.
\end{proposition}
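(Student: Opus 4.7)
\medskip\noindent
\textbf{Proof proposal.} The plan follows the pattern of \cite[Section~8]{KLLP2017} and \cite[Section~9]{MCF}: define $u_h^*$ and $w_h^*$ by (generalized) Ritz maps of the exact solution, then split each defect into a geometric part (arising because the weak formulation is posed on $\Gamma_h^*$ rather than on $\Gamma$) plus a Galerkin part (arising from interpolation/Ritz error in the nonlinear data). Both parts are then controlled by existing geometric consistency estimates and by standard Ritz map bounds.

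First I would define $w_h^*(\cdot,t)\in S_h[\xs(t)]^4$ as the componentwise Ritz projection of $w=(V,z)$, using the bilinear form $\int_{\Gamma}\nabla_\Gamma\cdot\nabla_\Gamma \cdot + \int_\Gamma \cdot\,\cdot$, and lifted via the map $L$ of Section~\ref{subsec:lifts}. For $u_h^*=(H_h^*,\nu_h^*)$ I would use a \emph{modified} Ritz map in the spirit of \cite{LubichMansour_wave} that accounts for the non-symmetric divergence contribution appearing in \eqref{eq:weak form - nu}; this modification is needed precisely because $\int \Q\,\nb_\Ga\cdot\phin$ does not fit the standard Ritz framework. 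Combining the error bounds for lifted Ritz maps from \cite{Demlow2009,Dziuk88,Kovacs2017} (with the well-known $O(h^{k+1})$ $L^2$ and $O(h^k)$ $H^1$ estimates, and analogous dual-norm bounds for material derivatives) with the regularity hypotheses of Theorem~\ref{MainTHM} immediately yields \eqref{us-ws-error bound} in part~(a).

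For part~(b), the defect $d_v$ is the residual of $v_h^*=\widetilde I_h^*(V_h^*\nu_h^*)+d_v$. I would write
\[
d_v = \widetilde I_h^*(V\nu)^{-l}- \widetilde I_h^*(V_h^*\nu_h^*) + \bigl(v_h^*-\widetilde I_h^*(V\nu)^{-l}\bigr),
\]
bounding the first difference via Lemma~\ref{lemma:interpolation} applied to $V-V_h^*$ and $\nu-\nu_h^*$ together with the Ritz bounds \eqref{us-ws-error bound}, and the second by standard nodal-interpolation error (with a change of surface from $\Gamma$ to $\Gamma_h^*$). Both contributions are $O(h^k)$ in $H^1(\Gamma_h^*)$. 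For $d_u$ and $d_w$, inserting $u_h^*,w_h^*$ into \eqref{defect vectors} produces residuals that I would decompose into (i) a \emph{geometric} part, i.e.\ the difference between $\int_{\Gamma(t)}$ and $\int_{\Gamma_h^*(t)}$ after lifting, controlled by the surface-perturbation estimates of \cite[Lemma~5.2 and Section~8]{KLLP2017} and \cite[Section~9]{MCF} in terms of $h^k$ times Sobolev norms of the exact solution; (ii) a \emph{Ritz} part, i.e.\ the consistency error of the (modified) Ritz map, which vanishes or is $O(h^k)$ by construction; and (iii) a \emph{data} part from replacing $H$, $\nu$, $A=\nb_\Gamma\nu$ by $H_h^*$, $\nu_h^*$, $A_h^*=\tfrac12(\nb_{\Gamma_h^*}\nu_h^* + (\nb_{\Gamma_h^*}\nu_h^*)^T)$ in the nonlinearities $\bff$, $\bfg$, $\bfF$. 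Part~(iii) is handled by $W^{1,\infty}$-boundedness of $u_h^*$ (from the interpolated-surface regularity) and the Lipschitz estimates already used in Part~(A.v) of the stability proof, combined with the Ritz bound \eqref{us-ws-error bound}.

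The main technical obstacle is the bound on the material derivatives $\pa_h^\bullet d_u$ and $\pa_h^\bullet d_w$ in the dual norm $H^{-1}_h(\Gamma_h^*)$. Here I would time-differentiate the defect equations, use the Leibniz rule for moving-surface integrals (in the form of \cite[Lemma~2.2]{DziukElliott_acta}), and exploit that the dual norm allows us to integrate by parts back all tangential gradients that appear after differentiating the nonlinearities $Q$, $|A|^2$, $HA-A^2$. The key point is that $\pa_h^\bullet$ of a Ritz-projected quantity is again close to the Ritz projection of the material derivative up to $O(h^k)$ in the dual norm, as established in \cite[Section~8]{KLLP2017} for the evolving Ritz map; together with the regularity assumption that $\pa_h^\bullet u$ and $\pa_h^\bullet w$ lie in $W^{k+1,\infty}$, this yields the claimed $O(h^k)$ bound uniformly in $t$. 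The initial-value error bound $\|\bar\bfe_\bfw(0)\|_{\star,\bfx^0}\le Ch^k$ is obtained by noting that $\bar\bfw^*(0)-\bar\bfw(0)$ solves an elliptic problem on $\Gamma_h[\bfx^0]$ whose right-hand side is exactly the $t=0$ defect $d_w(0)$, which has already been estimated; by an elliptic duality argument in the $H_h^{-1}$-norm this inherits the same rate. Combining with the analogous bounds for $\eu(0)$ and $\ew(0)$ via interpolation of the exact initial data completes the proof. \qed
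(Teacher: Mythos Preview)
Your overall strategy—Ritz maps plus a geometric/Ritz/data splitting, followed by time-differentiation and Leibniz for the material derivatives—matches the paper's. But you have the modified Ritz map on the wrong variable, and this is not a cosmetic difference.

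The terms that obstruct an $L^2$ bound on $d_u$ live in the $\nu$-component $d_\nu$: after inserting the reference functions into \eqref{nuh-prelim} and subtracting the exact weak form \eqref{eq:weak form - nu}, you are left with
\[
-\int_{\Gamma_h^*}\!\nabla_{\Gamma_h^*} z_h^*\!\cdot\!\nabla_{\Gamma_h^*}\varphi_h
-2\int_{\Gamma_h^*}\!(A_h^*\nabla_{\Gamma_h^*}H_h^*)\!\cdot\!(\nabla_{\Gamma_h^*}\varphi_h\,\nu_h^*)
-\int_{\Gamma_h^*}\!Q_h^*\,\nabla_{\Gamma_h^*}\!\cdot\varphi_h
\]
minus the corresponding exact-surface integrals. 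The only stiffness term here carries $z_h^*$, not $\nu_h^*$. Hence the way to eliminate all occurrences of $\nabla_{\Gamma_h^*}\varphi_h$ from $\int d_\nu\varphi_h$ is to \emph{define $z_h^*$} by a modified Ritz map that absorbs the two extra terms (this is what the paper does in its equation for the modified Ritz map). With $z_h^*$ taken as the \emph{standard} Ritz projection, the remaining differences in the $Q$- and $A\nabla H$-terms are only $O(h^k)\|\nabla\varphi_h\|_{L^2}$, which gives $\|d_\nu\|_{H_h^{-1}}\le Ch^k$ but not the required $\|d_\nu\|_{L^2}\le Ch^k$. You cannot recover the $L^2$ bound by integrating those terms by parts on $\Gamma_h^*$: that would produce $\nabla_{\Gamma_h^*}Q_h^*$, i.e.\ second derivatives of the piecewise-polynomial $\nu_h^*$, which are not controlled. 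Modifying the Ritz map for $\nu_h^*$ does not help either, because $\nu_h^*$ is not the function multiplying $\nabla\varphi_h$ in the stiffness term of the $\nu$-equation.

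There is also a well-posedness issue with your proposed modification: a modified Ritz map for $\nu_h^*$ that ``accounts for the divergence contribution'' would have to involve $Q_h^*$, but $Q_h^*$ depends on $A_h^*=\tfrac12(\nabla_{\Gamma_h^*}\nu_h^*+(\nabla_{\Gamma_h^*}\nu_h^*)^T)$, hence on $\nu_h^*$ itself, so the definition becomes circular. The paper's ordering—standard Ritz for $H_h^*,\nu_h^*,V_h^*$ first (so $Q_h^*$ is fixed), then modified Ritz for $z_h^*$—avoids this. Once you make this correction, your sketch for $d_v$, for the material derivatives via Leibniz and time-differentiated Ritz bounds, and for the initial errors is essentially the paper's argument.
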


Because of the large number of terms, we will not give the full proof of this proposition. We will instead consider the construction and properties of $z_h^*$ as an exemplary and actually the most demanding case among the reference finite element functions. This function will be constructed by a modified Ritz map in the next subsection, whereas $H_h^*$, $\nu_h^*$ and $V_h^*$ can be constructed with the required approximation properties via the standard Ritz map. The construction of $z_h^*$ by a modified Ritz projection is required in order to obtain the $O(h^k)$ estimate for the defect $d_\nu$ (where $d_u=(d_H;d_\nu)\in S_h[\xs]^{1+3}$). We will therefore prove the approximation estimate for $z_h^*$ according to part (a) and the defect estimates for $d_\nu$ and its material derivative according to part (b). All the other bounds of Proposition~\ref{prop:defect-bounds} are obtained by similar or simpler arguments.

\begin{remark} The error bounds \eqref{us-ws-error bound} imply that $u_h^*(\cdot,t)$ and $w_h^*(\cdot,t)$ are bounded in the $W^{1,\infty}$ norm uniformly in $h$ and $t\in[0,T]$. This follows from the equivalent error bound 
$
  \| u_h^*(\cdot,t) - \widehat I_h^*u(\cdot,t) \|_{H^1(\Ga_h^*(t))} \le C\, h^k
$
and the inverse estimate 
$\| \varphi_h \|_{W^{1,\infty}(\Ga_h^*(t))} \le ch^{-1} \, \| \varphi_h \|_{H^1(\Ga_h^*(t))}$
%\quad\text{ 
for $\varphi_h \in S_h(\Ga_h^*(t)$,
%} 
where $c$ depends only on shape regularity and quasi-uniformity of the triangulation and on the regularity and size of $\Ga(t)$.
\end{remark}

\begin{proposition}\label{prop:initial errors}
Under the assumptions of Theorem~\ref{MainTHM}, the initial errors are bounded by
\begin{equation}\label{eq:initial errors}
	\begin{aligned}
	\| \widetilde I_h	u(\cdot,0) - u_h^*(\cdot,0) \|_{H^1(\Ga_h^*(0))} = \|\eu(0)\|_{\bfK(\bfx^0)} \leq&\ C h^k, \\
	\| \widetilde I_h	w(\cdot,0) - w_h^*(\cdot,0) \|_{H^1(\Ga_h^*(0))} = \|\ew(0)\|_{\bfK(\bfx^0)} \leq&\ C h^k, \\		 
	\| \widetilde I_h	w(\cdot,0) - \bar w_h(\cdot,0) \|_{H^{-1}_h(\Ga_h^*(0))}= \| {\bar\bfe}_\bfw(0) \|_{\star,\bfx^0} \leq&\ C h^k,
	\end{aligned}
\end{equation}
where $\widetilde I_h:C(\Ga^0) \to S_h[\xs(0)]$ is the finite element interpolation operator and 
$\bar w_h(\cdot,0)=(\bar V_h(\cdot,0);\bar z_h(\cdot,0))\in S_h[\xs(t)]^4$ is the finite element function defined by \eqref{Vh-prelim} and \eqref{zh-prelim} at $t=0$, that is, it has the nodal vector $\bar \bfw(0)$ defined in Section~\ref{subsection:semi-discretization-modified}.
\end{proposition}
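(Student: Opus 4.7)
The plan is to prove the three bounds separately: the first two by standard interpolation and Ritz-map arguments, and the third by a weak-norm argument tailored to the discrete $H^{-1}_h$ duality.

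For the first two bounds, I would apply the triangle inequality,
\begin{equation*}
\| \widetilde I_h u(\cdot,0) - u_h^*(\cdot,0) \|_{H^1(\Ga_h^*(0))} \le \| \widetilde I_h u(\cdot,0) - u(\cdot,0) \|_{H^1} + \| u(\cdot,0) - u_h^*(\cdot,0) \|_{H^1},
\end{equation*}
where the norms on $\Ga^0$ and $\Ga_h^*(0)$ are equivalent up to geometric factors of order $h^{k+1}$ (via the lift with Jacobian $1+O(h^{k+1})$, cf.~\cite{Demlow2009}). The first summand is the standard surface nodal interpolation error, bounded by $Ch^k\|u^0\|_{H^{k+1}(\Ga^0)}$ for elements of degree $k$, and the second summand is bounded by $Ch^k$ thanks to the approximation property \eqref{us-ws-error bound} of Proposition~\ref{prop:defect-bounds}(a). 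The same argument, applied to $w$ in place of $u$, yields the second inequality.

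For the third bound I would use the intermediate splitting
\begin{equation*}
\widetilde I_h w(\cdot,0) - \bar w_h(\cdot,0) = \bigl(\widetilde I_h w(\cdot,0) - w_h^*(\cdot,0)\bigr) + \bigl(w_h^*(\cdot,0) - \bar w_h(\cdot,0)\bigr).
\end{equation*}
The first difference is $O(h^k)$ in the stronger $H^1$-norm by the bound just shown, hence also in $H^{-1}_h$. The core task is then to estimate the second difference in $H^{-1}_h(\Ga_h^*(0))$. Both $w_h^*(\cdot,0)$ and $\bar w_h(\cdot,0)$ live on the same surface $\Ga_h^*(0)=\Ga_h[\bfx^0]$ and solve closely related discrete elliptic systems: $w_h^*(\cdot,0)$ is the Ritz-type projection of the exact $w(\cdot,0)$ using the exact data $u^0=(H^0,\n^0)$, whereas $\bar w_h(\cdot,0)$ solves \eqref{Vh-prelim}--\eqref{zh-prelim} at $t=0$ with the interpolated data $(\widetilde I_h H^0,\widetilde I_h \n^0)$. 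Subtracting the two systems, testing with an arbitrary $\varphi_h\in S_h[\xs(0)]$, and dividing by $\|\varphi_h\|_{H^1(\Ga_h^*(0))}$ will produce the desired bound.

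After subtraction, the right-hand side acting on $\varphi_h$ contains two types of contributions. The gradient--gradient terms of the form $\int \nabla_{\Ga_h^*(0)}(H^0-\widetilde I_h H^0)\cdot \nabla_{\Ga_h^*(0)}\varphi_h$ are bounded directly by Cauchy--Schwarz and the standard interpolation estimate $\|\nabla(H^0-\widetilde I_h H^0)\|_{L^2}\le Ch^k$, yielding $Ch^k\|\varphi_h\|_{H^1}$, which already matches the target rate. The nonlinear contributions involving $Q$ and $|A|^2\n$ are handled by the product rule and Lipschitz bounds relative to the $W^{1,\infty}$ norms of $u^0$ and $\widetilde I_h u^0$ (the latter bounded uniformly in $h$), giving an $L^2$ bound of order $h^k$ and hence an $L^2$ pairing of order $Ch^k\|\varphi_h\|_{L^2}\le Ch^k\|\varphi_h\|_{H^1}$. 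The hard part will be keeping careful track of the several cubic and quadratic cross-terms arising from $Q_h=-\tfrac12 H_h^3 + |A_h|^2 H_h$ and $|A_h|^2\n_h$ in \eqref{Vh-prelim}--\eqref{zh-prelim}, together with the $O(h^{k+1})$ geometric discrepancies between $\Ga^0$ and $\Ga_h^*(0)$; these calculations mirror the consistency estimates for $\bfd_\bfw$ developed in the preceding parts of Section~\ref{section:Defect}, and, after careful bookkeeping, all terms respect the target rate $h^k$. I do not expect a better bound in the $H^1$-norm for this difference, which is precisely why the correction term $\bfvartheta$ of Section~\ref{subsection:semi-discretization-modified} is needed.
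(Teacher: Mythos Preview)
Your treatment of the first two bounds matches the paper's: triangle inequality, interpolation error, and the Ritz-map error \eqref{us-ws-error bound} from Proposition~\ref{prop:defect-bounds}(a), together with norm equivalence under the lift.

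For the third bound, your detour through $w_h^*$ is workable but the description is muddled. You write that $w_h^*$ and $\bar w_h$ ``solve closely related discrete elliptic systems'', but $w_h^*$ is defined by a (modified) Ritz map of $w$, not by the discrete constraint \eqref{Vh-prelim}--\eqref{zh-prelim}. What \emph{is} true is that $\ws(0)$ satisfies the discrete constraint only up to the defect $\dw(0)$; that is precisely \eqref{dw} at $t=0$. Subtracting \eqref{dw} from the equation for $\bar\bfw(0)$ (both live on $\Ga_h[\bfx^0]$) yields a gradient term $\int\nabla_{\Ga_h^0}(u_h^*-\widetilde I_h u^0)\cdot\nabla_{\Ga_h^0}\varphi_h$, i.e.\ the error $\eu(0)$ rather than the interpolation error $H^0-\widetilde I_h H^0$ you wrote, plus a Lipschitz difference in $\bfg$ and the defect term $\int d_w\,\varphi_h$. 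All three are $O(h^k)$ against $\|\varphi_h\|_{H^1}$, so your route closes---but only after explicitly invoking the defect bound from Proposition~\ref{prop:defect-bounds}, which you do not mention.

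The paper avoids this detour: it compares $\bar z_h$ (and $\bar V_h$) directly with $\widetilde I_h z$ (resp.\ $\widetilde I_h V$) by subtracting the \emph{continuous} constraint \eqref{eq:weak form - z} for $z$ on $\Ga^0$, with lifted test function $\varphi_h^l$, from the discrete constraint \eqref{zh-prelim} for $\bar z_h$ on $\Ga_h^0$. This produces pairs of integrals on $\Ga_h^0$ and $\Ga^0$---for instance $-\int_{\Ga_h^0}\nabla_{\Ga_h^0}\widetilde I_h\n\cdot\nabla_{\Ga_h^0}\varphi_h+\int_{\Ga^0}\nabla_{\Ga^0}\n\cdot\nabla_{\Ga^0}\varphi_h^l$---handled by exactly the geometric consistency lemmas already used for the defect bounds. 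The specific terms you wrote down, involving $H^0-\widetilde I_h H^0$ and the nonlinear $Q$ and $|A|^2\n$ differences, are in fact the ones that arise in \emph{this} direct comparison, not in the $w_h^*$ versus $\bar w_h$ comparison you actually set up; so your argument as written mixes the two routes.
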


Here we note that the first two bounds (for $\eu(0)$ and $\ew(0)$) follow directly from the error bounds \eqref{us-ws-error bound} at $t=0$, the known error bounds for interpolation and the equivalence of the $H^1$-norms of a function on the interpolated surface and its lift on the exact surface \cite{DziukElliott_ESFEM}. The third bound will be proved for the $z$-component of $\bar w_h=(\bar V_h,\bar z_h)$ in Section~\ref{subsec:err-bar-z}. The proof for the $V$-component is completely analogous and is therefore omitted.

After combining Propositions~\ref{prop:defect-bounds} and~\ref{prop:initial errors} with the stability estimates of Proposition~\ref{proposition:stability},  our main result Theorem~\ref{MainTHM} is proved by the same short argument as in \cite[Section~9]{MCF}.

\subsection{Construction of $z_h^*$  by a modified Ritz map}

The defect $d_\nu(\cdot,t)\in S_h(\Ga_h^*(t))^3$ is defined as the finite element function on $\Ga_h^*(t)$ with nodal vector $\bfd_{\mathbf{\nu}}(t)$, which comprises the last $3N$ components of the defect vector $\bfd_\bfu(t)=(\bfd_\bfH(t);\bfd_{\mathbf{\nu}}(t))\in \R^{4N}$ defined in \eqref{du}. Translated back into a function setting, $d_\nu$ is determined as the defect in \eqref{nuh-prelim} on inserting reference finite element functions $H_h^*$, $\nu_h^*$ and $V_h^*$, $z_h^*$ on $\Ga_h^*(t)=\Ga_h[\xs(t)]$ in place of the numerical solution $H_h$, $\nu_h$ and $V_h$, $z_h$ on $\Ga_h(t)=\Ga_h[\bfx(t)]$: 
with $A_h^*=\half(\nb_{\Ga_h^*} \nu_h^*+ (\nb_{\Ga_h^*} \nu_h^*)^T)$ %, $K_h^*=\half((H_h^*)^2 - |A_h^*|^2)$ 
and $Q_h^* = -\tfrac12 (H_h^*)^3 + |A_h^*|^2 H_h^*$ (and omitting the omnipresent argument $t$),
\begin{align}
	\label{d_nu}
	\int_{\Ga_h[\xs]} \!\!\! d_\nu \vphi_h 
	= &\ \int_{\Ga_h[\xs]} \!\!\! \mat_{h}\nu_h^* \cdot \vphi_h 
		\\ 
	&\  - \int_{\Ga_h[\xs]} \!\!\! \nb_{\Ga_h[\xs]}z_h^* \cdot\nb_{\Ga_h[\xs]}\vphi_h  \nonumber
	\\    
	&\  	- 2 \int_{\Ga_h[\xs]} \!\!\! (A_h^*\nb_{\Ga_h[\xs]} H_h^*) \cdot (\nb_{\Ga_h[\xs]} \vphi_h\, \n_h^* ) 		
	- \int_{\Ga_h[\xs]} \!\!\!  Q_h^* (\nb_{\Ga_h[\xs]} \cdot \vphi_h) 
	\nonumber \\	
	       		&\ -  \int_{\Ga_h[\xs]} \big( |\nb_{\Ga_h[\xs]} H_h^*|^2 \nu_h^* + (A_h^*)^2 \nb_{\Ga_h[\xs]} H_h^* \big) \cdot \vphi_h 
		\nonumber \\
	&\  + \int_{\Ga_h[\xs]} \!\!\! \Qh^* H_h^* \, \nu_h^*\cdot \vphi_h
	 - \int_{\Ga_h[\xs]} \!\! (H_h^* A_h^* - (A_h^*)^2) \ z_h^* \cdot \vphi_h  
	\nonumber
%	\\
%			&	 \int_{\Ga_h[\bfx]} \!\!\!\! \mat_h \n_h \cdot \phin_h 
%		- \int_{\Ga_h[\bfx]} \!\!\!\!  \nb_{\Ga_h[\bfx]} z_h \cdot \nb_{\Ga_h[\bfx]} \phin_h  
%		%\nonumber 
%%		\\
%%		&\hspace{72pt} 
%		 = -  \int_{\Ga_h[\bfx]} \!\!\!  \Qh \,\nb_{\Ga_h[\bfx]} \cdot \phin_h \nonumber 
%		\\
%		&\hspace{36pt}  
%		+\int_{\Ga_h[\bfx]} \!\!\! \Qh H_h \, \nu_h\cdot \phin_h
%%		 \nonumber \\
%%		&\hspace{72pt} 
%		+ \int_{\Ga_h[\bfx]} \!  \bigl( H_h A_h - A_h^2 \bigr) \nb_{\Ga_h[\bfx]} H_h \cdot \phin_h , \\[8pt] 
\end{align}
for all $\vphi_h\in S_h[\xs]^3$. As we want to bound the $L^2$ norm of $d_\nu$, we can expect difficulties from the terms in the second and third line of \eqref{d_nu}, which contain the surface gradient and divergence of the test function $\vphi_h$. We subtract equation \eqref{eq:weak form - nu} for the exact solution $\nu$ from \eqref{d_nu}:
\begin{align}
\label{d_nu-reformulated}
	&\int_{\Ga_h[\xs]} \!\!\! d_\nu \vphi_h 
	=  \int_{\Ga_h[\xs]} \!\!\! \mat_{h}\nu_h^* \cdot \vphi_h  - \int_{\Ga[X]} \!\!\! \mat \n \cdot \vphi_h^l 
		\\ 
	&\  + \left( - \int_{\Ga_h[\xs]} \!\!\! \nb_{\Ga_h[\xs]}z_h^* \cdot\nb_{\Ga_h[\xs]}\vphi_h 
	- 2 \int_{\Ga_h[\xs]} \!\!\! (A_h^*\nb_{\Ga_h[\xs]} H_h^*) \cdot (\nb_{\Ga_h[\xs]} \vphi_h\, \n_h^* )  \right.
	\nonumber \\	
	&\qquad\  \left.
	- \int_{\Ga_h[\xs]} \!\!\!  Q_h^* (\nb_{\Ga_h[\xs]} \cdot \vphi_h) \right.
	\nonumber \\	
	&\qquad\  \left. + \int_{\Ga[X]} \!\!\! \nb_{\Ga[X]} z \cdot \nb_{\Ga[X]} \vphi_h^l  
	+ 2 \int_{\Ga[X]} \!\!\! (A\nb_{\Ga[X]} H) \cdot (\nb_{\Ga[X]} \vphi_h^l \n ) \right.
	\nonumber \\	
	&\qquad\  \left.	 + \int_{\Ga[X]} \!\!\! \Q \,\nb_{\Ga[X]} \cdot \vphi_h^l  \right)
        	\nonumber \\	
		       		&\ -  \int_{\Ga_h[\xs]} \big( |\nb_{\Ga_h[\xs]} H_h^*|^2 \nu_h^* + (A_h^*)^2 \nb_{\Ga_h[\xs]} H_h^* \big) \cdot \vphi_h 
        \nonumber \\
			&\qquad\quad +  \int_{\Ga[X]} \big( |\nb_{\Ga[X]} H|^2 \nu + A^2 \nb_{\Ga[X]} H \big) \cdot \vphi_h^l
       \nonumber \\
	& \  + \int_{\Ga_h[\xs]} \!\!\! \Qh^* H_h^* \, \nu_h^*\cdot \vphi_h - \int_{\Ga[X]} \!\!\! \Q H \,\nu \cdot \vphi_h^l
        	\nonumber \\		
	&\  - \int_{\Ga_h[\xs]} \!\!  (H_h^* A_h^* - (A_h^*)^2) \ z_h^* \cdot \vphi_h   
	+ \int_{\Ga[X]} \!  (H A - A^2) \ z \cdot \vphi_h^l
	\nonumber
\end{align}
for all $\vphi_h\in S_h[\xs]^3$. The critical terms in the big bracket from the second to the fifth line are replaced by harmless terms, which do not contain a gradient or divergence of the test function, when we define $z_h^*\in S_h[\xs]^3$ by a 
\emph{modified Ritz map}: $z_h^*$ is determined from $z$ (and $Q_h^*$ and $Q$) by
\begin{align}
\nonumber
&\int_{\Ga_h[\xs]} \!\!\! \bigl( \nb_{\Ga_h[\xs]}z_h^* \cdot\nb_{\Ga_h[\xs]}\vphi_h + z_h^*\vphi_h \bigr)
%\\
%\nonumber
%&   
=  \int_{\Ga[X]} \!\!\! \bigl( \nb_{\Ga[X]} z \cdot \nb_{\Ga[X]} \vphi_h^l + z \vphi_h^l \bigr)
\\
\nonumber
& + \left(  2 \int_{\Ga_h[\xs]} \!\!\! (A_h^*\nb_{\Ga_h[\xs]} H_h^*) \cdot (\nb_{\Ga_h[\xs]} \vphi_h\, \n_h^* ) -
2 \int_{\Ga[X]} \!\!\! (A\nb_{\Ga[X]} H) \cdot (\nb_{\Ga[X]} \vphi_h^l \n ) \right)
\\
\label{mod-ritz}
&	+ \left( \int_{\Ga_h[\xs]} \!\!\!  Q_h^* (\nb_{\Ga_h[\xs]} \cdot \vphi_h) - \int_{\Ga[X]} \!\!\! \Q \,\nb_{\Ga[X]} \cdot \vphi_h^l  \right)
\end{align}
for all $\vphi_h\in S_h[\xs]^3$. A similar modified Ritz map, also motivated by a perturbation term containing the gradient of the test function, was previously used and analysed in \cite{LubichMansour_wave}. For later use we can now restate the simplified version of \eqref{d_nu-reformulated},
\begin{align}
\label{d_nu after mod ritz}
	&\int_{\Ga_h[\xs]} \!\!\! d_\nu \vphi_h 
	=  \int_{\Ga_h[\xs]} \!\!\! \mat_{h}\nu_h^* \cdot \vphi_h  - \int_{\Ga[X]} \!\!\! \mat \n \cdot \vphi_h^l 
		\\ 
	&\  + \int_{\Gamma_h[\xs]} \! z_h^* \cdot  \varphi_h 
- \int_{\Gamma[X]} z \cdot  \varphi_h^l 
        	\nonumber \\	
			 &\ -  \int_{\Ga_h[\xs]} \big( |\nb_{\Ga_h[\xs]} H_h^*|^2 \nu_h^* + (A_h^*)^2 \nb_{\Ga_h[\xs]} H_h^* \big) \cdot \vphi_h 
        \nonumber \\
			&\qquad\quad +  \int_{\Ga[X]} \big( |\nb_{\Ga[X]} H|^2 \nu + A^2 \nb_{\Ga[X]} H \big) \cdot \vphi_h^l
       \nonumber \\
	& \  + \int_{\Ga_h[\xs]} \!\!\! \Qh^* H_h^* \, \nu_h^*\cdot \vphi_h - \int_{\Ga[X]} \!\!\! \Q H \,\nu \cdot \vphi_h^l
        	\nonumber \\		
	&\  - \int_{\Ga_h[\xs]} \!\! (H_h^* A_h^* - (A_h^*)^2) \ z_h^* \cdot \vphi_h   
	+ \int_{\Ga[X]} \!  (H A - A^2) \ z \cdot \vphi_h^l
	\nonumber
\end{align}
for all $\vphi_h\in S_h[\xs]^3$, where, thanks to the modified Ritz map \eqref{mod-ritz}, no gradients of the test function appear.

We note that the equation for $\nu$ is the only equation in \eqref{eq:weak form} for which the right-hand side contains the gradient or the divergence of the test function. The other reference finite element functions $H_h^*,\nu_h^*,V_h^*$ are therefore defined by the standard Ritz map (which only contains the first line in \eqref{mod-ritz}). We further note that $Q_h^*$ is thus defined independently of $z_h^*$, and so all the reference finite element functions $H_h^*,\nu_h^*,V_h^*,z_h^*$ are well defined.

\subsection{Error bounds of the reference finite element functions}
For the functions defined by the standard Ritz map, 
the optimal-order $H^1$ error bounds of \cite[Theorems~6.3--6.4]{Kovacs2017} yield, under the regularity assumptions of Theorem~\ref{MainTHM},
\begin{equation}
\label{ritz-error}
\begin{aligned}
\| (\nu_h^*)^l(\cdot, t) -\nu(\cdot,t) \|_{H^1(\Ga(t))} &\le C\, h^k
\\
\| (\partial_{h}^\bullet \nu_h^*)^l(\cdot, t) - \partial^\bullet \nu(\cdot,t) \|_{H^1(\Ga(t))} &\le C\, h^k
\\
\|(\partial_{h}^\bullet\partial_{h}^\bullet \nu_h^*)^l(\cdot, t) - \partial^\bullet \partial^\bullet \nu(\cdot,t) \|_{H^1(\Ga(t))} &\le C\, h^k
\end{aligned}
\end{equation}
and analogously for $H_h^*$ and $V_h^*$. These error bounds also yield, by an inverse estimate, that these reference finite element functions functions have $W^{1,\infty}$ norms bounded independently of $h$. These facts, together with the equivalence of norms on $\Ga(t)$ and $\Ga_h^*(t)$, give us
 an $L^2$ error bound for $Q_h^*$:
$$
\| (Q_h^*)^l(\cdot, t) -Q(\cdot,t) \|_{L^2(\Ga(t))} \le C\, h^k.
$$
We then rewrite the last term in \eqref{mod-ritz} as
\begin{align*}
&\int_{\Ga_h[\xs]} \!\!\!  Q_h^* (\nb_{\Ga_h[\xs]} \cdot \vphi_h) -  \int_{\Ga[X]} \!\!\! (Q_h^*)^l \,\nb_{\Ga[X]} \cdot \vphi_h^l
\\
& \ + \int_{\Ga[X]} \!\!\! (Q_h^*)^l \,\nb_{\Ga[X]} \cdot \vphi_h^l - \int_{\Ga[X]} \!\!\! \Q \,\nb_{\Ga[X]} \cdot \vphi_h^l,
\end{align*}
where the difference in the second line is bounded by $C\, h^k \| \nb_{\Ga[X]} \cdot \vphi_h^l \|_{L^2(\Ga[X])}$ in view of the $L^2$ error bound of $Q_h^*$. The first term has a bound of the same type by the higher-order version of \cite[Lemma~5.5]{DziukElliott_L2} that follows with the geometric error bound of \cite[Lemma~5.2]{Kovacs2017}.

The difference of the integrals in the second line of \eqref{mod-ritz} is bounded by $C\, h^k \| \nb_{\Ga[X]}  \vphi_h^l \|_{L^2(\Ga[X])}$ by the same arguments.

With these error bounds for the extra terms in the modified Ritz map, the proof of \cite[Theorems~8.2 and~8.3]{LubichMansour_wave} together with the higher-order geometric perturbation error bounds of \cite[Lemma~5.6]{Kovacs2017} yields the error bounds
\begin{equation}
\label{mod-ritz-error}
\begin{aligned}
\| (z_h^*)^l(\cdot, t) -z(\cdot,t) \|_{H^1(\Ga(t))} &\le C\, h^k
\\
\| \partial_{h}^\bullet(z_h^*)^l(\cdot, t) - \partial^\bullet z(\cdot,t) \|_{H^1(\Ga(t))} &\le C\, h^k.
\end{aligned}
\end{equation}

\subsection{Bounds for the defect $d_\nu$ and its material derivative}

The techniques of the proof of the defect bounds have already been used for the defect bounds in \cite{KLLP2017} and \cite{MCF}, but we now have the additional difficulty to prove estimates for the time-differentiated defects.

The terms in \eqref{d_nu after mod ritz} do not contain the gradient of the test function, and this allows us obtain estimates of $d_\nu$ in the $L^2$ norm. All the pairs of terms in \eqref{d_nu after mod ritz} can be bounded by the same arguments as for the corresponding terms in \cite[Section~8]{KLLP2017}, using the Ritz map error bounds of the previous subsection instead of the bound for the interpolation error where required. These arguments are based on geometric estimates that were previously proved in \cite{Demlow2009,Dziuk88,DziukElliott_ESFEM,DziukElliott_L2,Kovacs2017}. We refer to \cite{KLLP2017} for the details.

To bound the material derivative $\partial_{h}^\bullet d_\nu$ in the $H^{-1}_h$ norm, we differentiate \eqref{d_nu-reformulated} in time for time-dependent test functions $\vphi_h(\cdot,t)\in S_h[\xs(t)]^3$ with time-independent nodal vector and use the Leibniz formula to obtain
$$
\int_{\Ga_h^*} \partial_{h}^\bullet d_\nu \, \vphi_h = \frac{\d}{\d t} \int_{\Ga_h^*}  d_\nu \,\vphi_h
- \int_{\Ga_h^*}  d_\nu \, (\nb_{\Ga_h[\xs]} \cdot v_h^*) \,\vphi_h,
$$
where we used that $\partial_{h}^\bullet \vphi_h =0$. The last term is readily estimated using the $L^2$ bound of $d_\nu$. For the first term on the right-hand side, we insert \eqref{d_nu after mod ritz} and differentiate each pair of terms using the Leibniz rule. With the Ritz map error bounds \eqref{ritz-error} for the material derivatives of $\nu_h^*$ and $H_h^*$ and again with the arguments of \cite{KLLP2017} we then obtain the stated error bound for $\partial_{h}^\bullet d_\nu$.

\subsection{Error bound of $\bar z_h(\cdot,0)$}
\label{subsec:err-bar-z}

By the construction of $\bar z_h(\cdot,0)$ from \eqref{zh-prelim}, we have (omitting in the following the argument $t=0$)
$$
\int_{\Gamma_h^0} \! \bar z_h \cdot \vphi_h =- \int_{\Gamma_h^0} \! \nb_{\Gamma_h^0} \n_h \cdot \nb_{\Gamma_h^0} \vphi_h + \int_{\Gamma_h^0} \! | A_h |^2 \n_h \cdot \vphi_h
$$
for all $\vphi_h\in S_h(\Gamma_h^0)^3$. Here $\nu_h$ is the finite element interpolation of $\nu$ on $\Gamma_h^0$ and
$A_h = \half (\nb_{\Ga_h^0} \n_h + (\nb_{\Ga_h^0} \n_h)^T)$. This is to be compared with the interpolation of $z(0)$, which satisfies
\eqref{eq:weak form - z}, that is,
$$
\int_{\Ga^0} \!\!\! z \cdot \vphi =- \int_{\Ga^0} \!\!\! \nb_{\Ga[X]} \n \cdot \nb_{\Ga^0} \vphi + \int_{\Ga^0} \!\!\! |A|^2 \n \cdot \vphi
$$
for all $\vphi_h\in H^1(\Gamma^0)$. So we obtain
\begin{align*}
&\int_{\Gamma_h^0} \! (\bar z_h - \widetilde I_h z) \cdot \vphi_h =
- \int_{\Gamma_h^0} \! \nb_{\Gamma_h^0} \n_h \cdot \nb_{\Gamma_h^0} \vphi_h
+\int_{\Ga^0} \!\!\! \nb_{\Ga[X]} \n \cdot \nb_{\Ga^0} \vphi_h^l
\\
&\ + \int_{\Gamma_h^0} \! | A_h |^2 \n_h \cdot \vphi_h - \int_{\Ga^0} \!\!\! |A|^2 \n \cdot \vphi_h^l
- \int_{\Gamma_h^0} \!  \widetilde I_h z \cdot \vphi_h + \int_{\Ga^0} \!\!\! z \cdot \vphi_h^l
\end{align*}
for all $\vphi_h\in S_h(\Gamma_h^0)^3$. The pairs of terms on the right-hand side can be estimated in the same way as similar pairs in the defects, and so we obtain the last bound of \eqref{eq:initial errors} for $\bar z_h(\cdot,0)$ in the $H^{-1}_h(\Ga_h^0)$ norm.

\section{Numerical experiments}
\label{section:numerics}

We performed the following numerical experiments for Willmore flow: 
\begin{itemize}
	\item Convergence tests using stationary solutions of Willmore flow, i.e.~a sphere and a Clifford torus. 
	\item We report on the Willmore energy of a few surfaces from the literature, e.g.~\cite{Dziuk_Willmore} and \cite{BGN2008Willmore}, as they evolve towards the stationary solution, minimising the energy. 
%	\item \bbk ??? \ebk We have implemented Dziuk's algorithm form \cite[Problem~3]{Dziuk_Willmore}, and performed a few comparison tests.
\end{itemize}
All our numerical experiments use quadratic evolving surface finite elements. The numerical computations were carried out in Matlab. The initial meshes for all surfaces were generated using DistMesh \cite{distmesh}, without taking advantage of the symmetries of the surfaces.
For time discretization we use the linearly implicit backward difference formula (BDF) of order 2 applied to the 
system \eqref{eq:matrix-form-X-v}--\eqref{eq:matrix--vector form}
in matrix-vector formulation, in the same way as described in \cite{MCF}.

\subsection{Convergence tests}

We computed approximations to Willmore flow for two surfaces that are \emph{stationary} along Willmore flow (though not along the discretized flow): a sphere of radius $R=1$ ($W(\Gamma) = 8\pi \approx 25.1327$), and a Clifford torus ($W(\Gamma) = 4\pi^2 \approx 39.4784$), i.e.~a torus with the quotient of radii $R/r = \sqrt{2}$, in our case with major radius $R = 1$ and minor radius $r = 1/\sqrt{2}$. The considered time interval is  $[0,T]=[0,1]$ in both cases.

For our convergence tests we used a sequence of time step sizes $\tau_k=\tau_{k-1}/2$ with $\tau_0 = 0.2$, and a sequence of meshes with mesh widths $h_k \approx 2^{-1/2} h_{k-1}$. For the Clifford torus, in order to reasonably resolve the surface, we used graded meshes that are more refined around the hole.

We  started the time integration from the nodal interpolations of the exact initial values $\nu(\cdot,0)$, $H(\cdot,0)$, and $V(\cdot,0) = 0$, $z(\cdot,0) = \nb_{\Ga^0} H(\cdot,0)$, which were computed analytically.

In Figure~\ref{fig:conv_sphere_space_nonprojected} we report on the errors between the numerical and exact solutions for the Willmore flow of a sphere until the final time $T=1$ (the first three plots from left to right: the surface error and the errors of the dynamic variables $\nu$ and $H$), and on the Willmore energy of the discrete surfaces at the final time $T=1$ (rightmost plot). 
The logarithmic error plots show the $L^\infty(H^1)$ norm errors, and the Willmore energy (the plot is semi-logarithmic) against the mesh width $h$. The lines marked with different symbols correspond to different time step sizes.

\begin{figure}[htbp]
	\includegraphics[width=\textwidth]{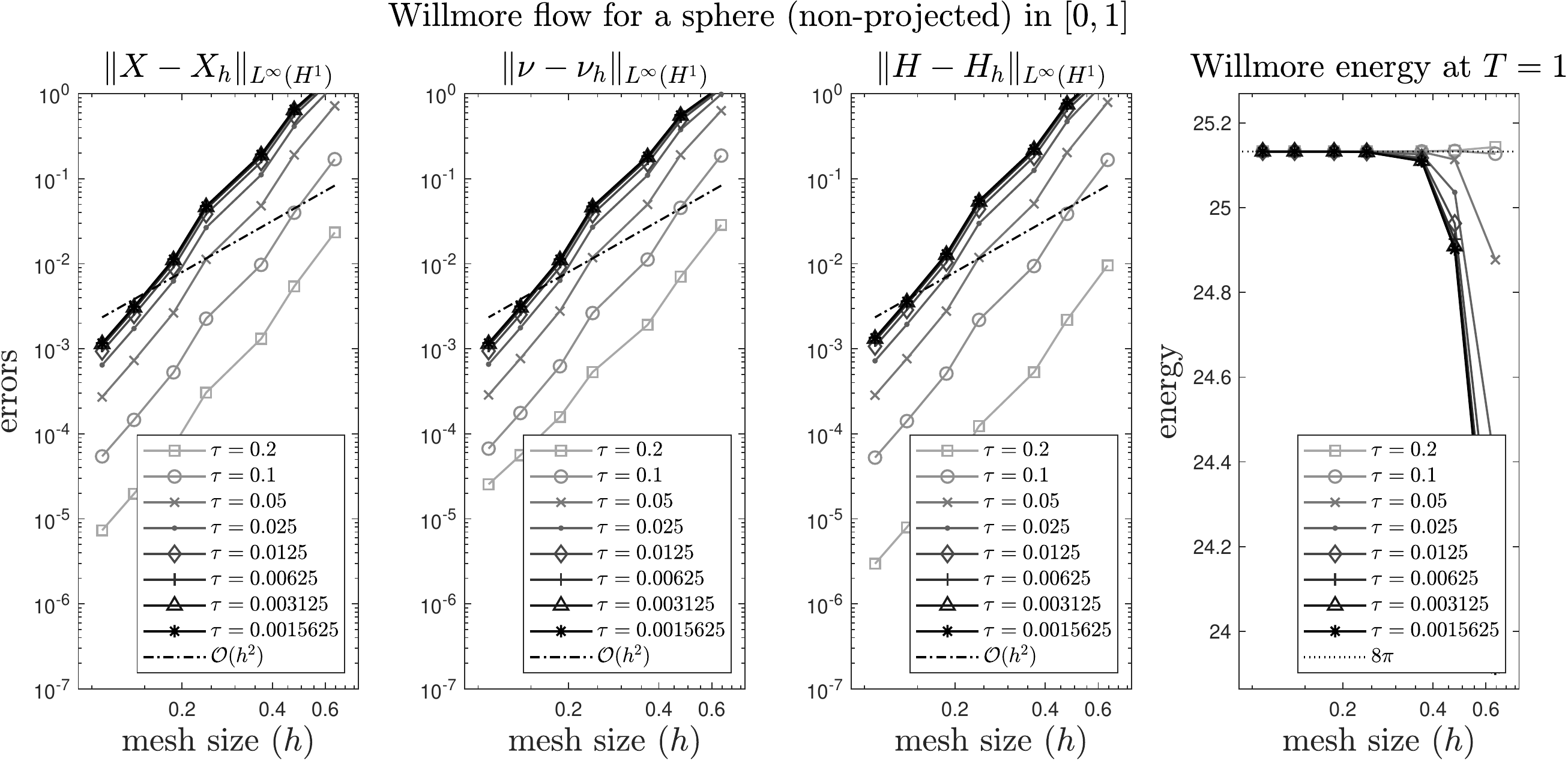}
	\caption{Spatial convergence of the BDF2 / quadratic ESFEM discretization (without projecting $nu_h$ and $z_h$) for the Willmore flow of a sphere for $T = 1$.}
	\label{fig:conv_sphere_space_nonprojected}
\end{figure}

In order to preserve important properties of the functions in the approximation, it is recommended to \emph{project} the computed surface normal $\nu_h\approx \nu$ onto the unit sphere and the function $z_h\approx \nabla_\Gamma H$ onto the tangent space. This is done after every time step. The advantages of a normalising projection for $\nu_h$ were already explored in numerical experiments for mean curvature flow in \cite{MCF}. 
%Note that although the length of $\nu_h$ deviates from $1$ in the computations, it still satisfies the estimate $\|1 - |\nu_h^L|\|_{H^1(\Ga[X])} \leq \|\nu - \nu_h^L\|_{H^1(\Ga[X])^3} = O(h^k)$, in view of Theorem~\ref{MainTHM} and the reversed triangle inequality.

Figure~\ref{fig:normalization} reports on the effect of the projections using a sphere of radius~$1$: the left plots show the minimum ($\circ$) and maximum ($*$) lengths of the normal vectors, the middle plots the roundness of the surface (minimum ($\circ$) and maximum ($*$) distance from origin), and the right plots show the Willmore energy along the flows.

\begin{figure}[htbp]
	\includegraphics[width=\textwidth]{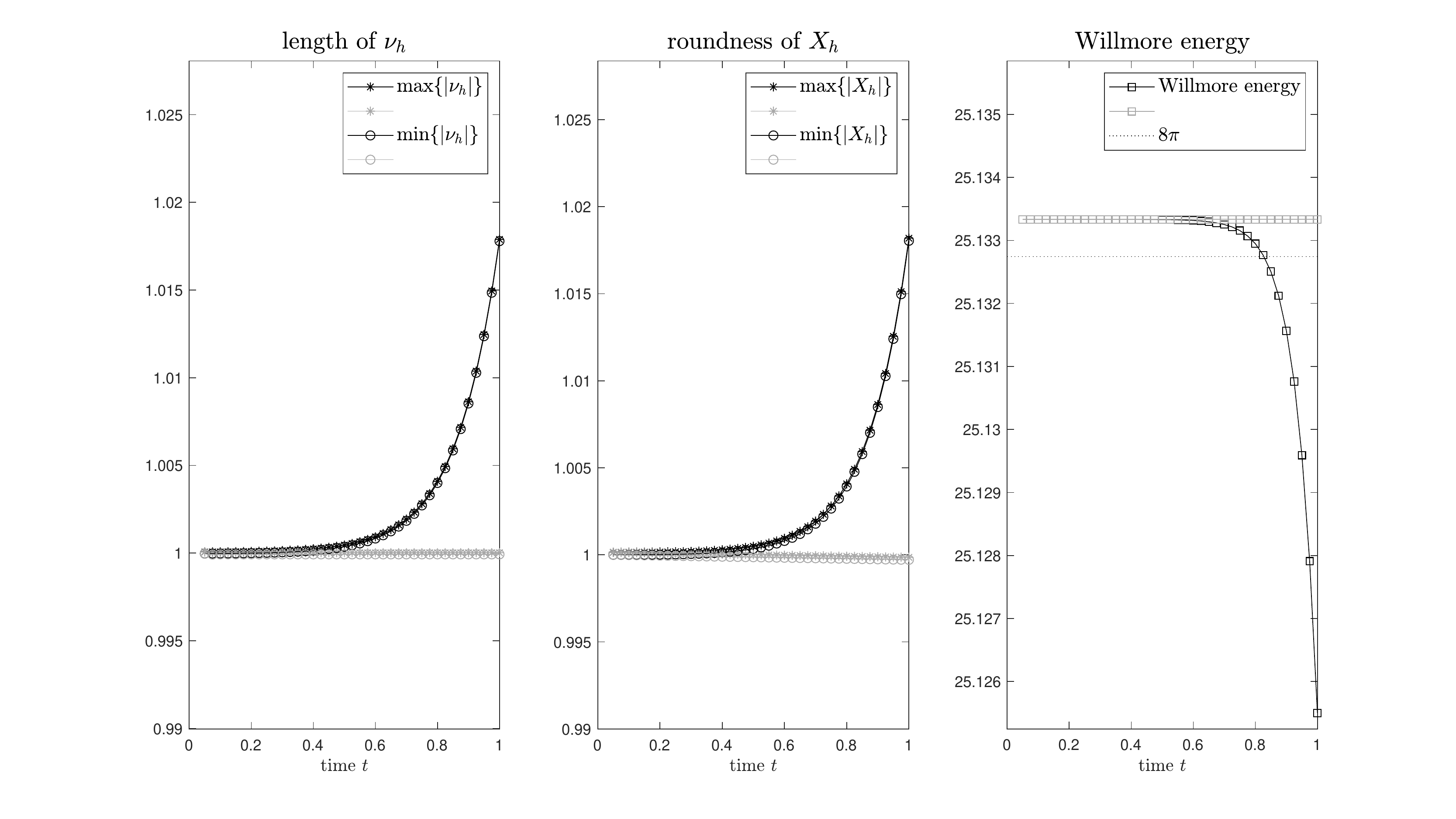}
	\caption{Comparing the effect of projecting $\nu_h$ and $z_h$ onto the unit sphere and the tangent plane, respectively, (black without projections, grey with projections) using a sphere (dof $=1914$, $\tau = 0.025$) reporting on length of normal $\nu_h$, surface roundness (i.e.~radius), and Willmore energy along the flow in $[0,1]$.}
	\label{fig:normalization}
\end{figure}

Figure~\ref{fig:conv_sphere_space} and \ref{fig:conv_Clifford_space} report on the same experiments for the algorithm \emph{with projections} for the sphere and the Clifford torus, respectively. For the Clifford torus we   used the linearly implicit  Euler method (BDF1), whose damping property proved to be advantageous for this delicate experiment. \ebk 

In Figures~\ref{fig:conv_sphere_space_nonprojected}, \ref{fig:conv_sphere_space} and \ref{fig:conv_Clifford_space} the spatial discretization error dominates, matching (exceeding even) the $O(h^2)$ order of convergence (note the reference lines), in agreement with our theoretical results in Theorem~\ref{MainTHM}.

\begin{figure}[htbp]
	\includegraphics[width=\textwidth]{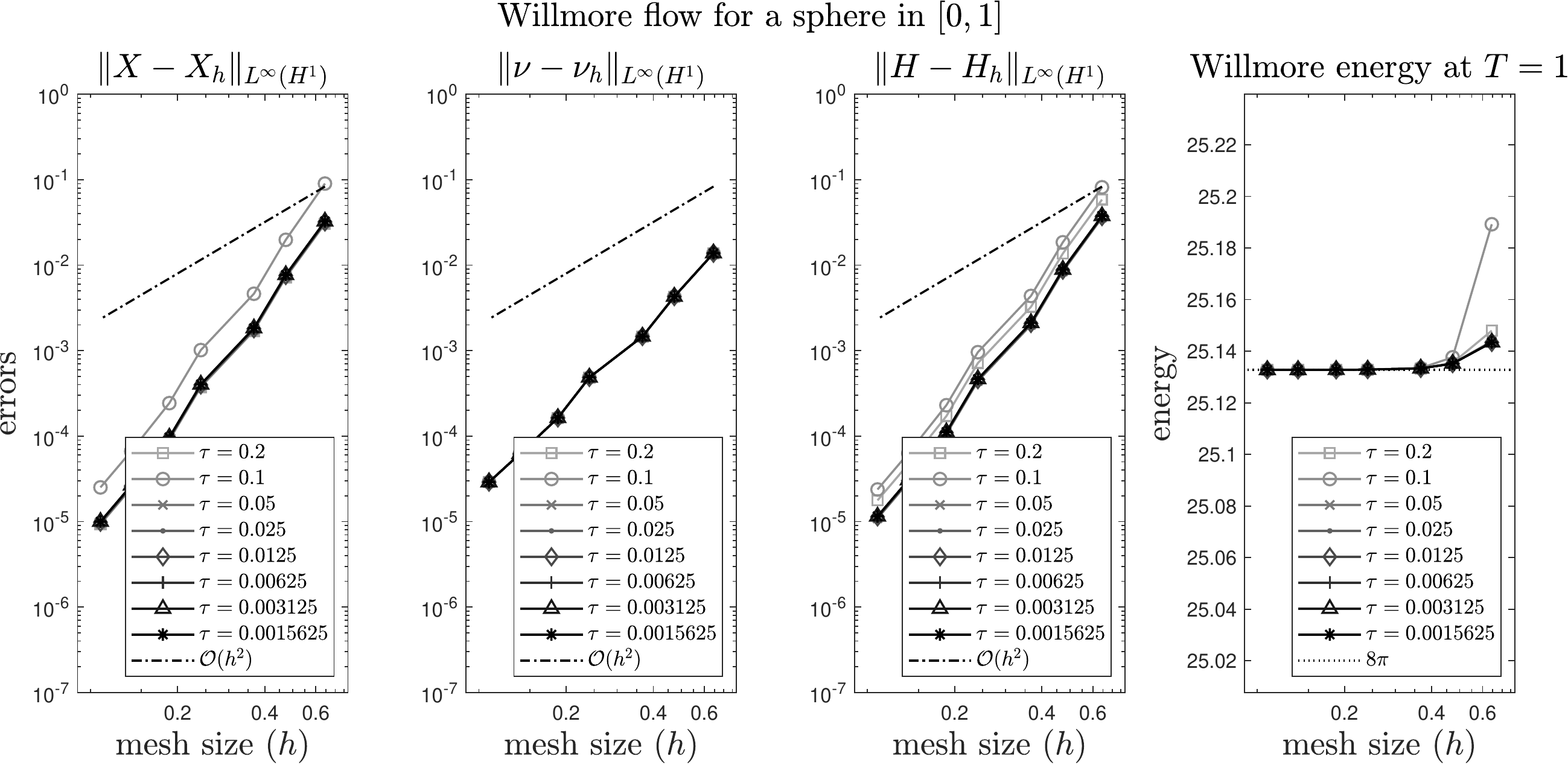}
	\caption{Spatial convergence of the BDF2 / quadratic ESFEM discretization (with projection of $\nu_h$ and $z_h$ onto the unit sphere and the tangent plane, respectively) for the Willmore flow of a sphere.}
	\label{fig:conv_sphere_space}
\end{figure}
\begin{figure}[htbp]
	\includegraphics[width=\textwidth]{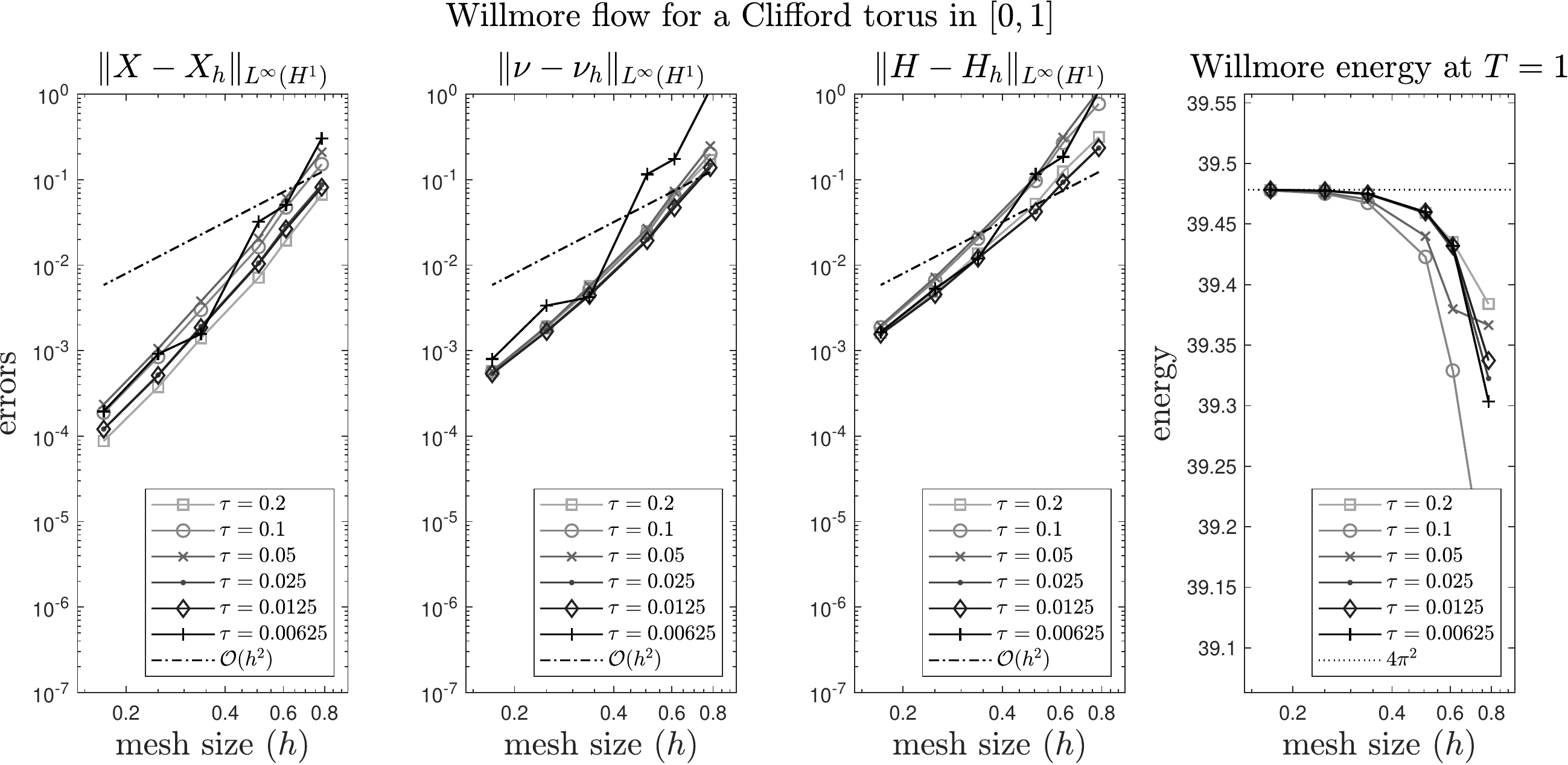}
	\caption{Spatial convergence of the BDF1 / quadratic ESFEM discretization (with projection of $\nu_h$ and $z_h$ onto the unit sphere and the tangent plane, respectively) for the Willmore flow of a Clifford torus.}
	\label{fig:conv_Clifford_space}
\end{figure}

\subsection{Willmore flow towards stationary solutions}

In Figure~\ref{fig:W_E} we report on experiments for the surface evolution and Willmore energy for three surfaces: an ellipsoid, a surface of the  shape of a red blood cell (cf.~\cite{Dziuk_Willmore} and \cite{BGN2008Willmore}), and a periodically perturbed torus (cf.~\cite{Dziuk_Willmore}). 
The meshes have $4978$, $8978$, and $12000$ nodes, respectively, and were integrated in $[0,4]$ with a time step size $\tau = 0.2 \cdot 2^{-8} \approx 7.8 \cdot 10^{-4}$. 
In the last two rows, times $t = 3$ and $4$, it can be observed that the algorithm leaves the surface stationary.
%\bbk (Allow us to note, that in \cite{Dziuk_Willmore} for the same experiments a step size of $\tau = 10^{-5}$ was used, and the corresponding energy plots were only displayed over a shorter time period with $T = 5 \cdot 10^{-3}$.) \ebk 

\begin{figure}[htbp]
	\includegraphics[width=0.32\textwidth]{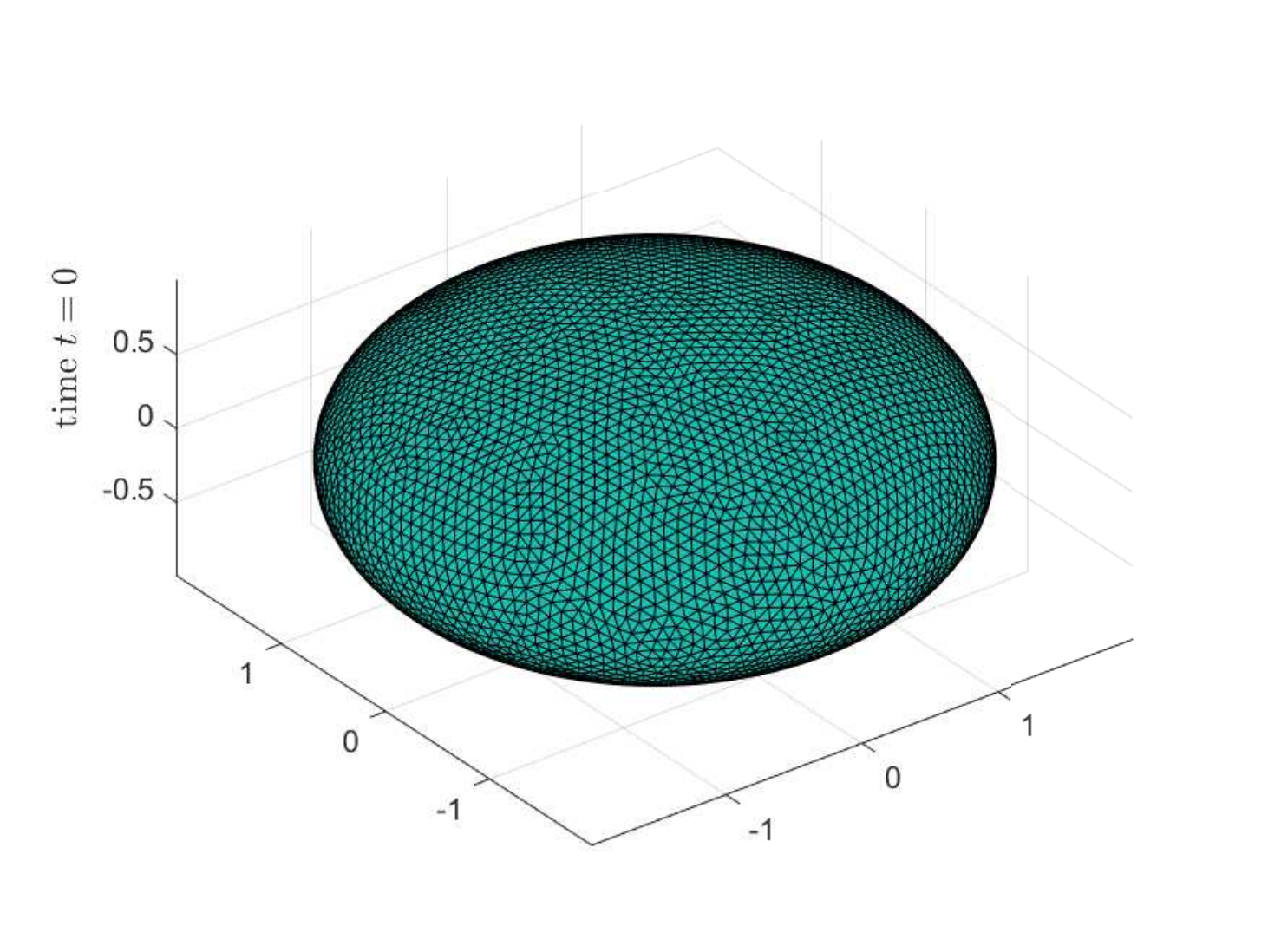} 
	\includegraphics[width=0.32\textwidth]{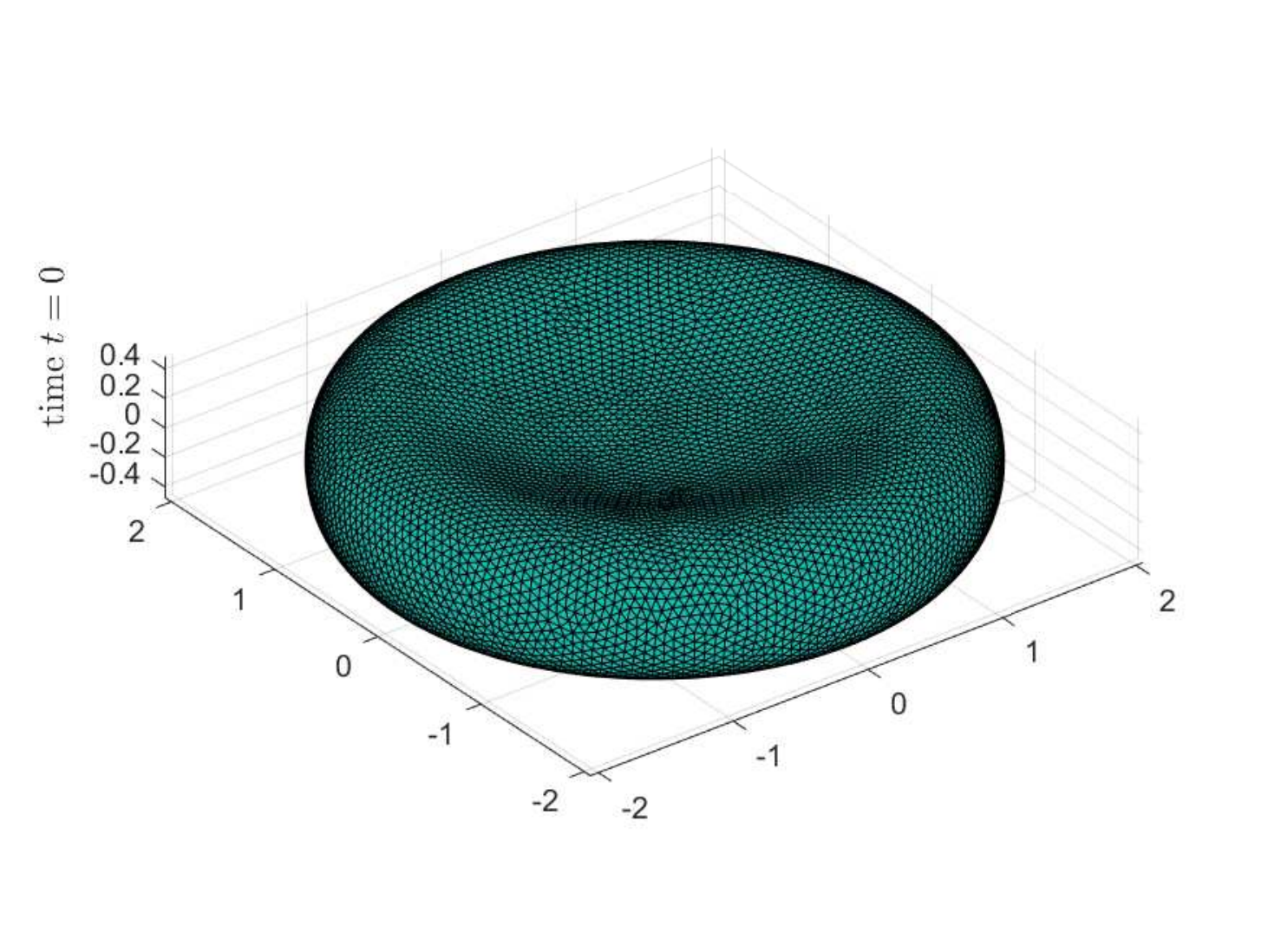} 
	\includegraphics[width=0.32\textwidth]{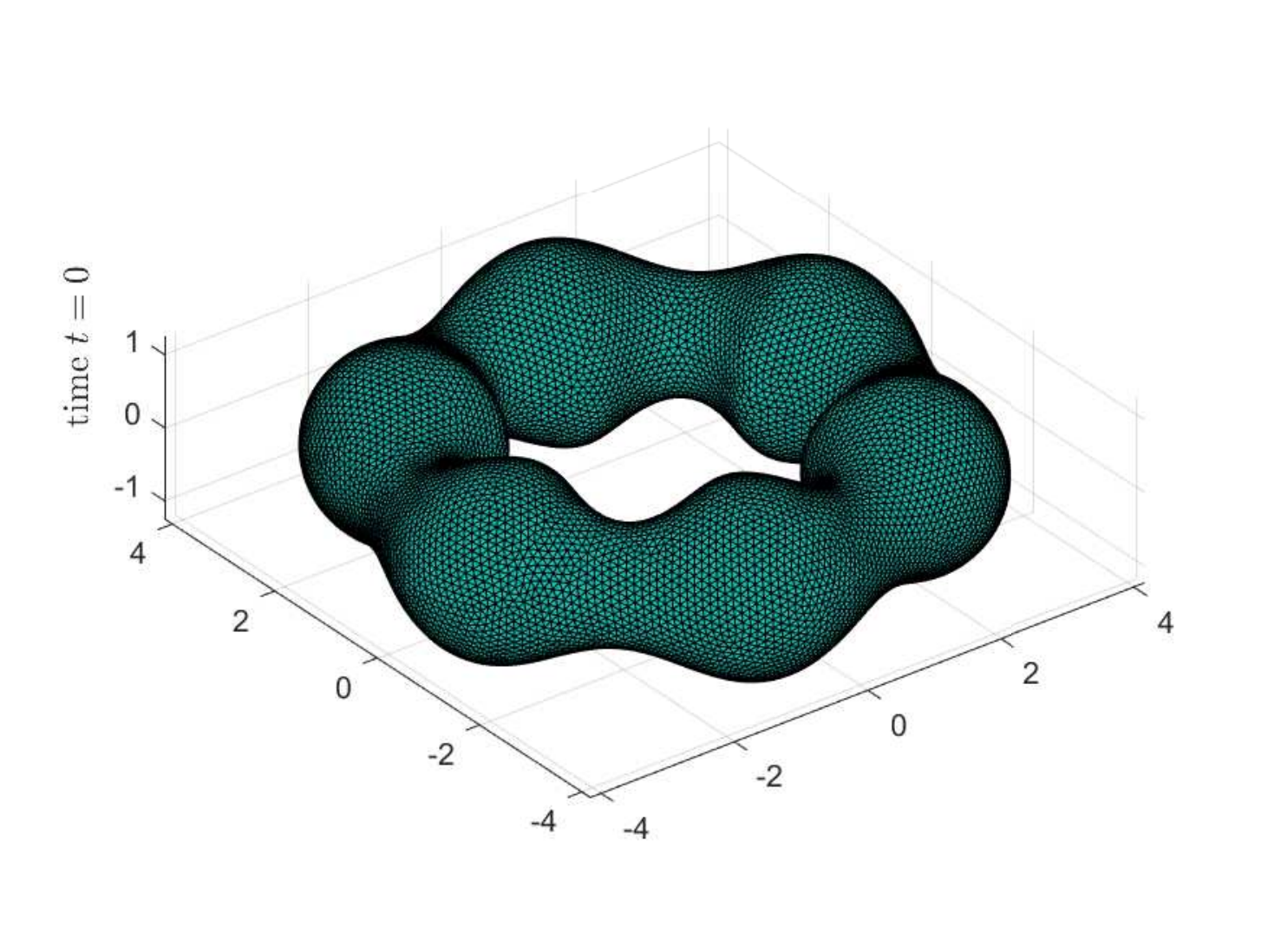} 
	\\
	\includegraphics[width=0.32\textwidth]{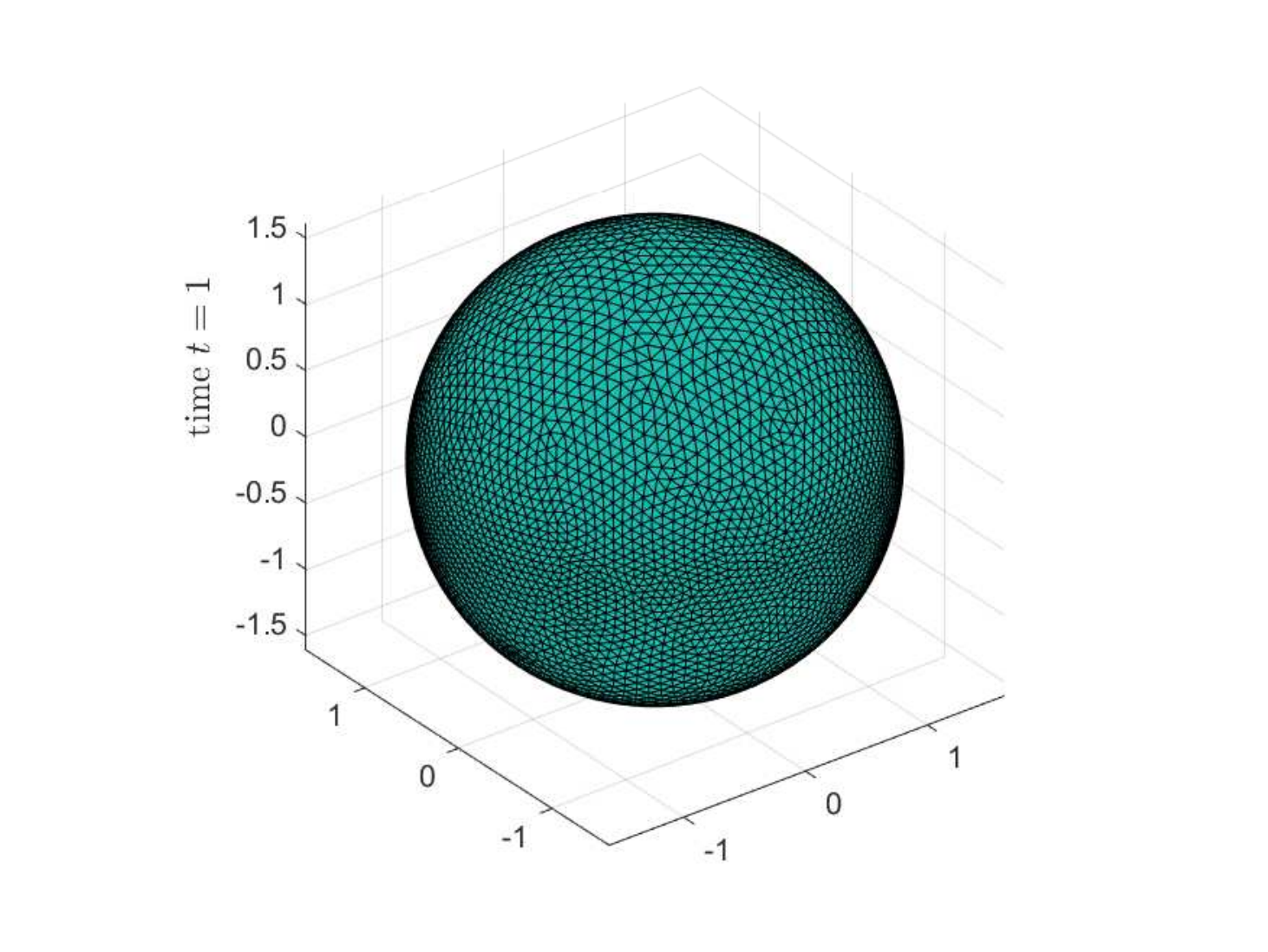} 
	\includegraphics[width=0.32\textwidth]{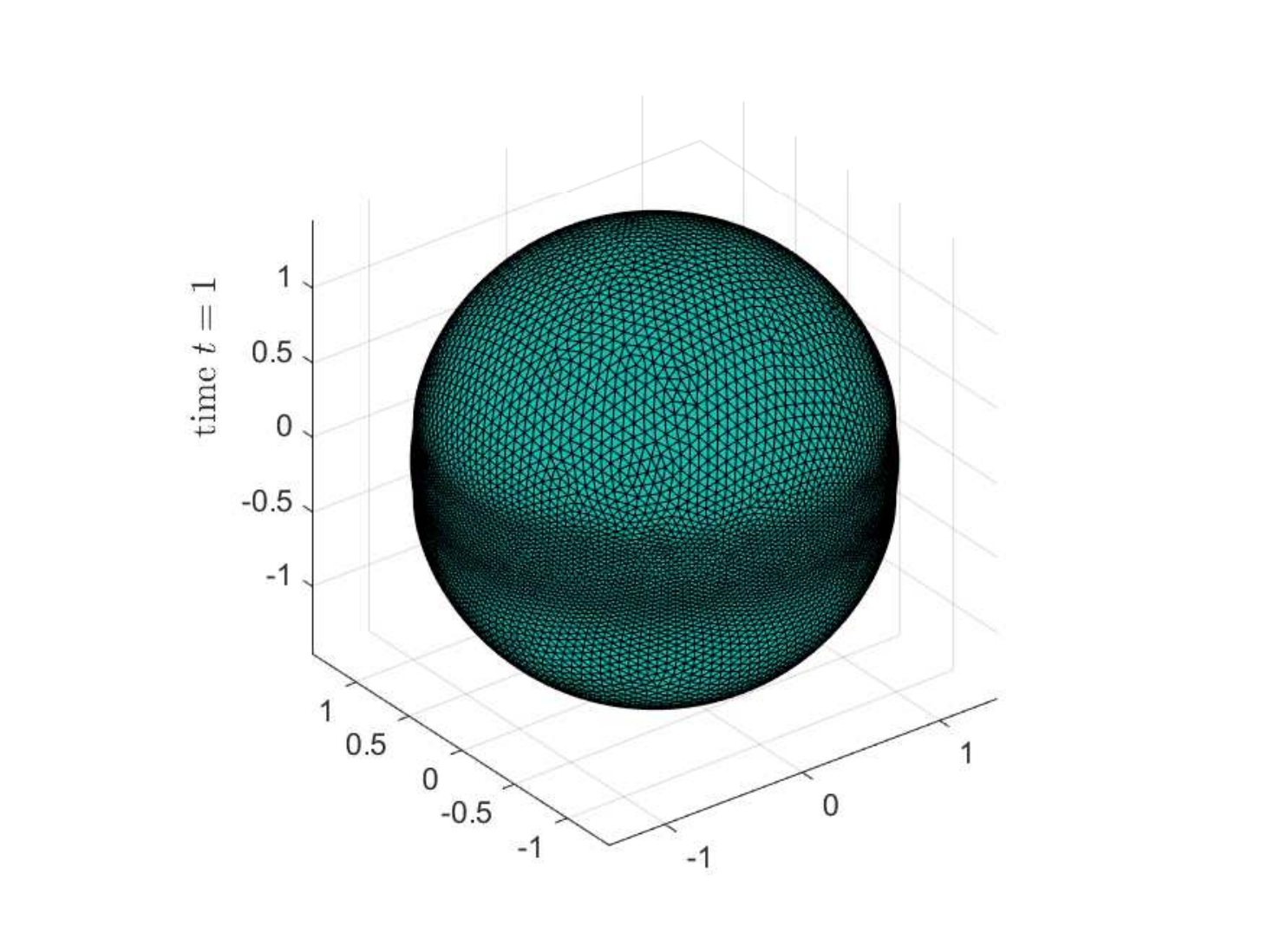} 
	\includegraphics[width=0.32\textwidth]{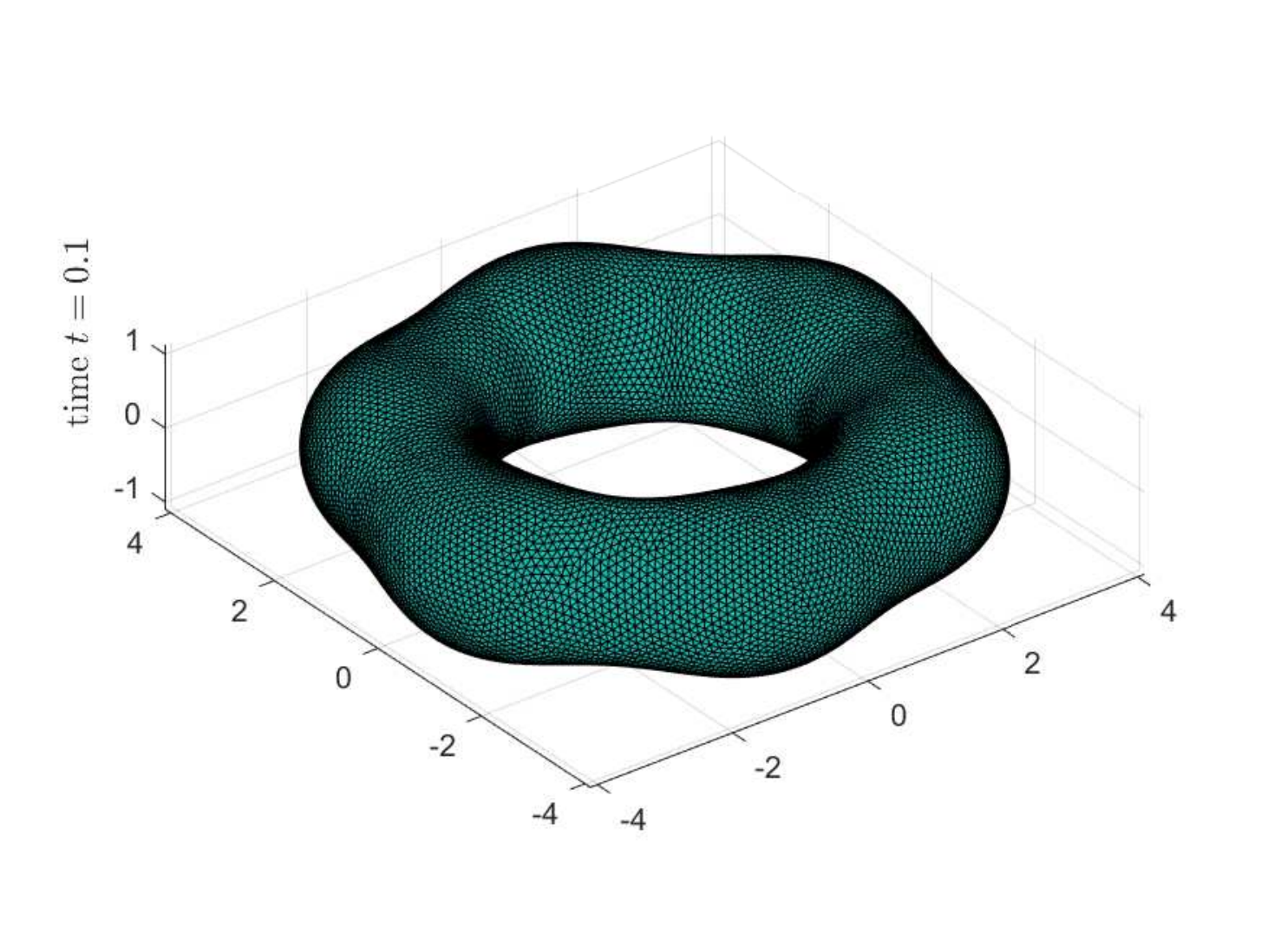} 
	\\
	\includegraphics[width=0.32\textwidth]{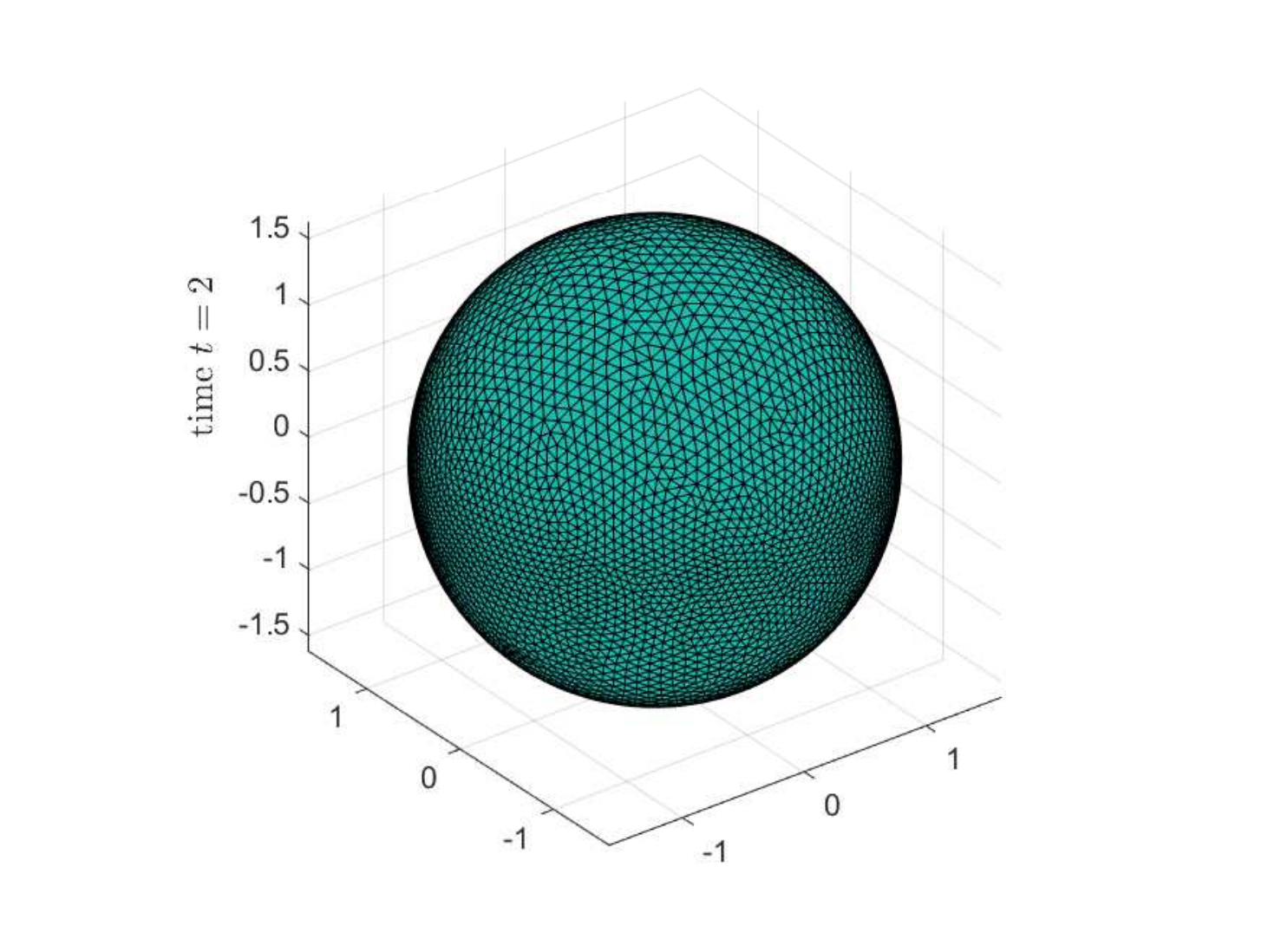} 
	\includegraphics[width=0.32\textwidth]{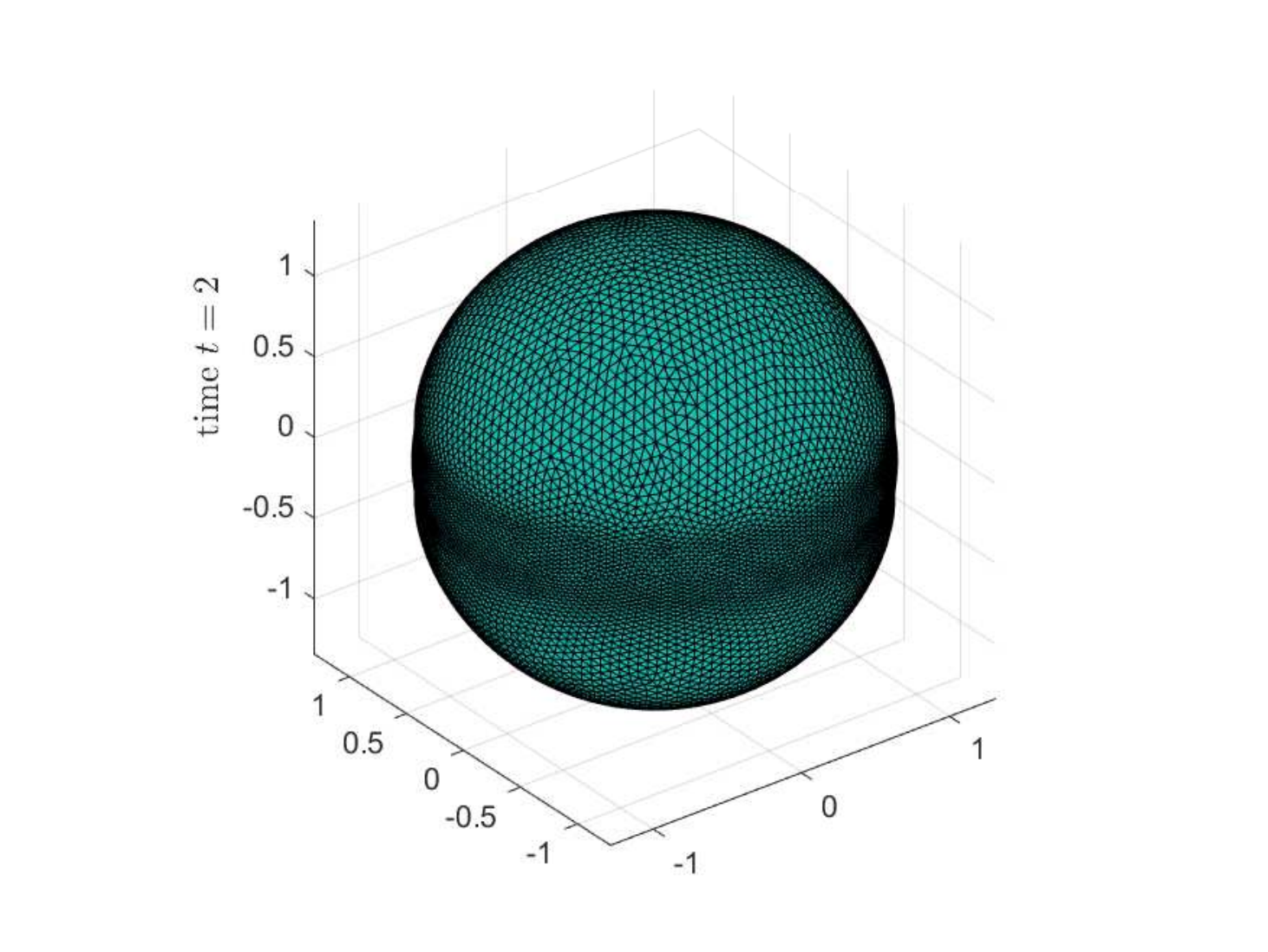} 
	\includegraphics[width=0.32\textwidth]{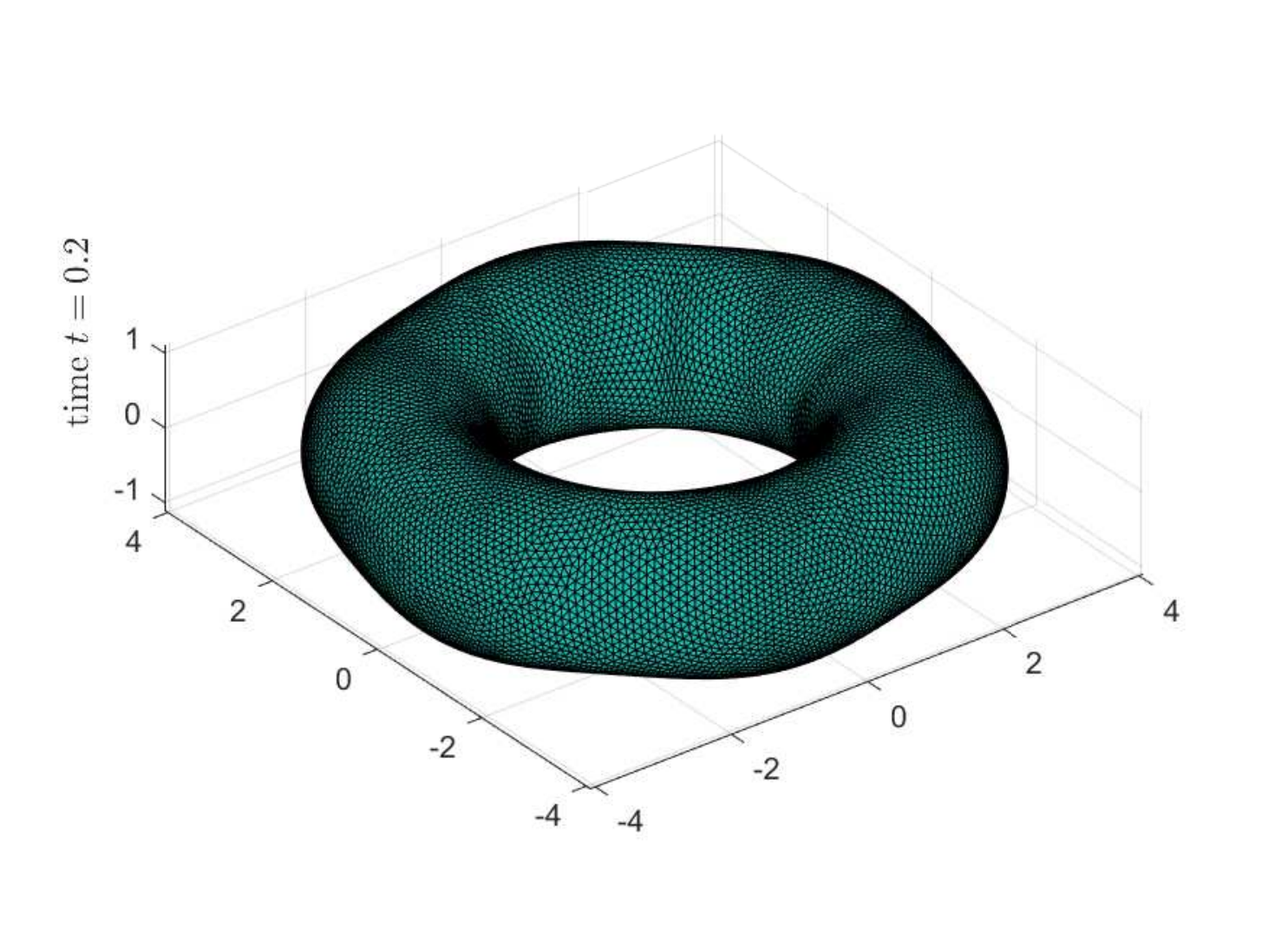} 
	\\
	\includegraphics[width=0.32\textwidth]{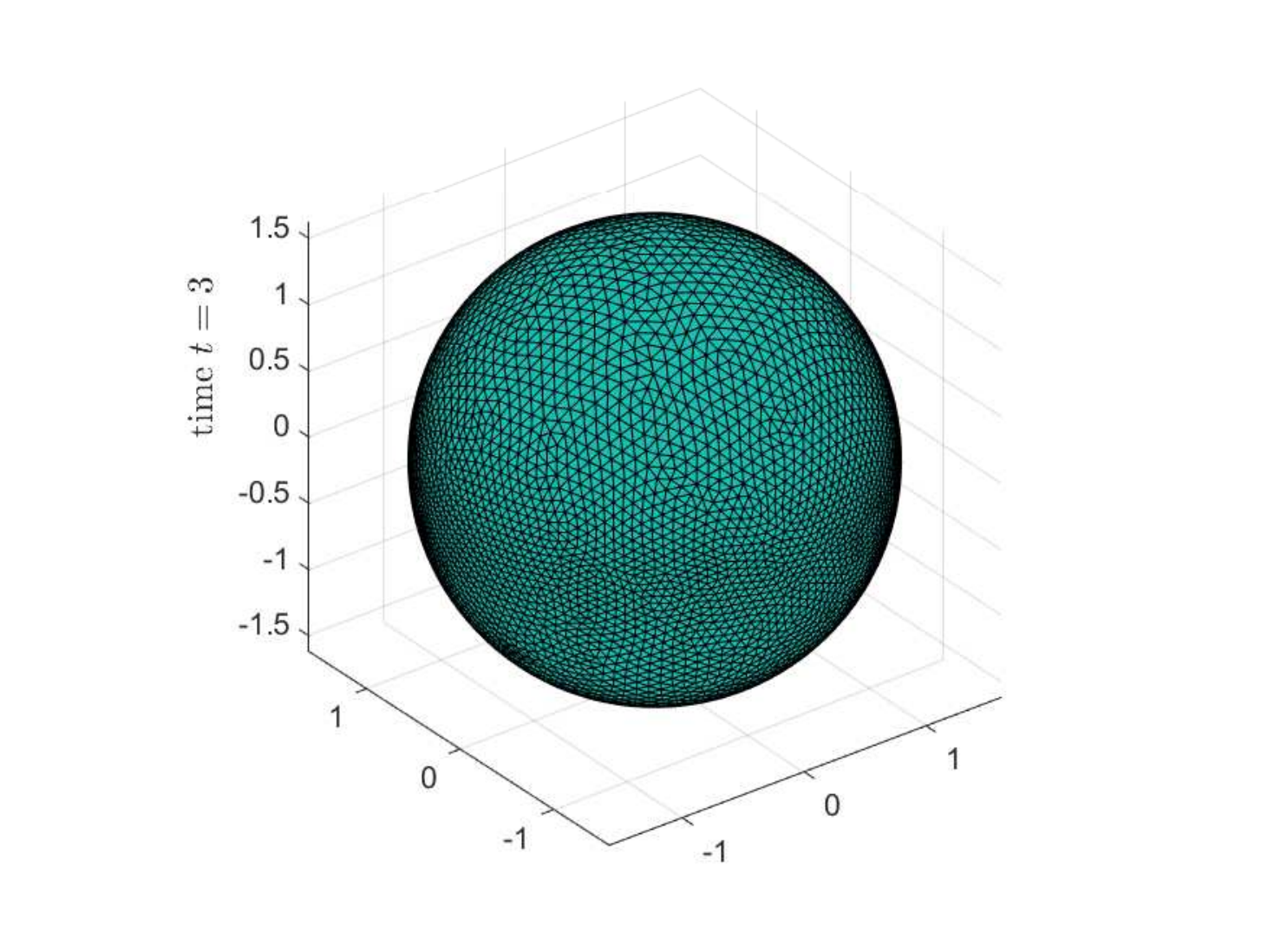} 
	\includegraphics[width=0.32\textwidth]{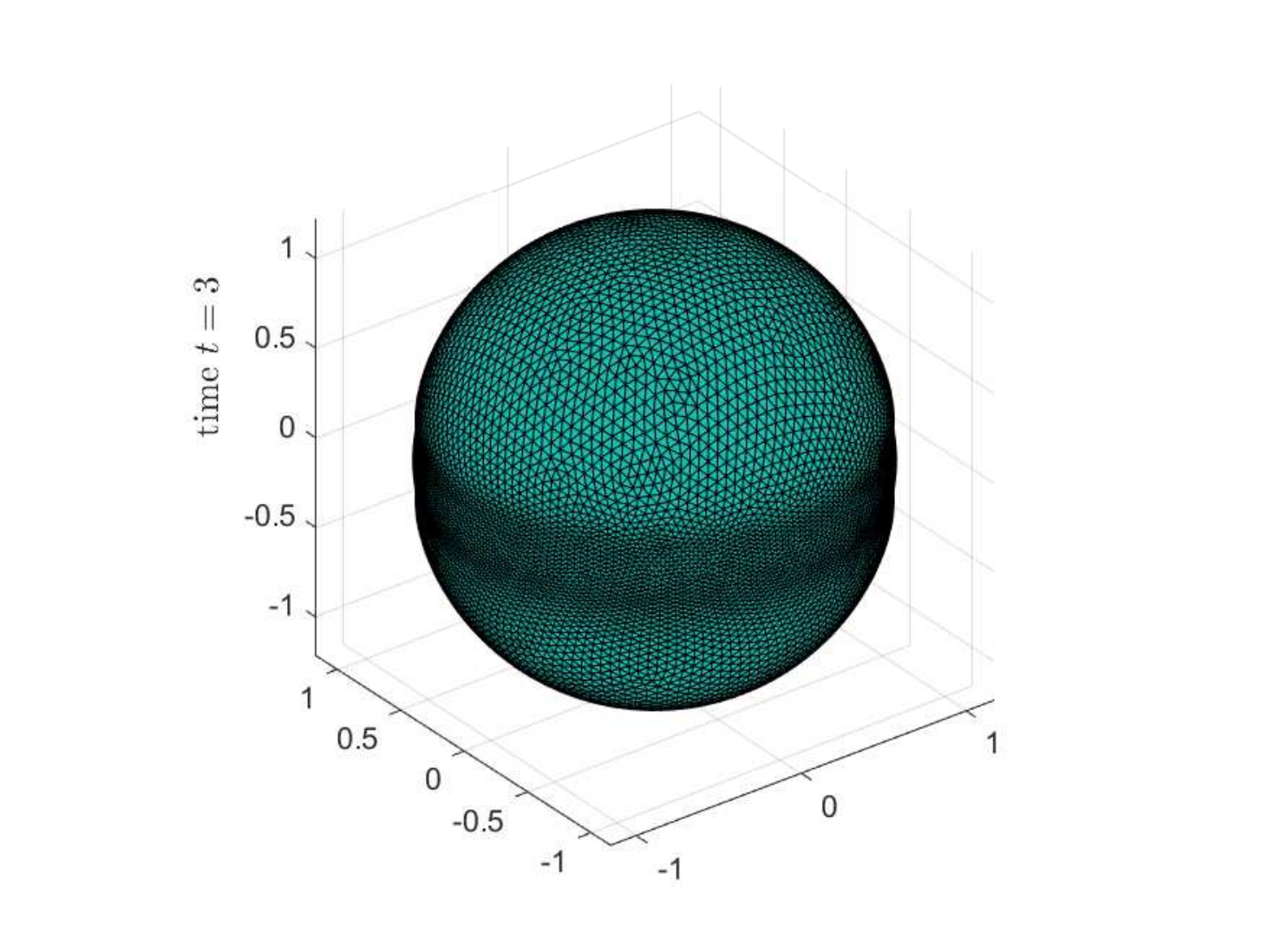} 
	\includegraphics[width=0.32\textwidth]{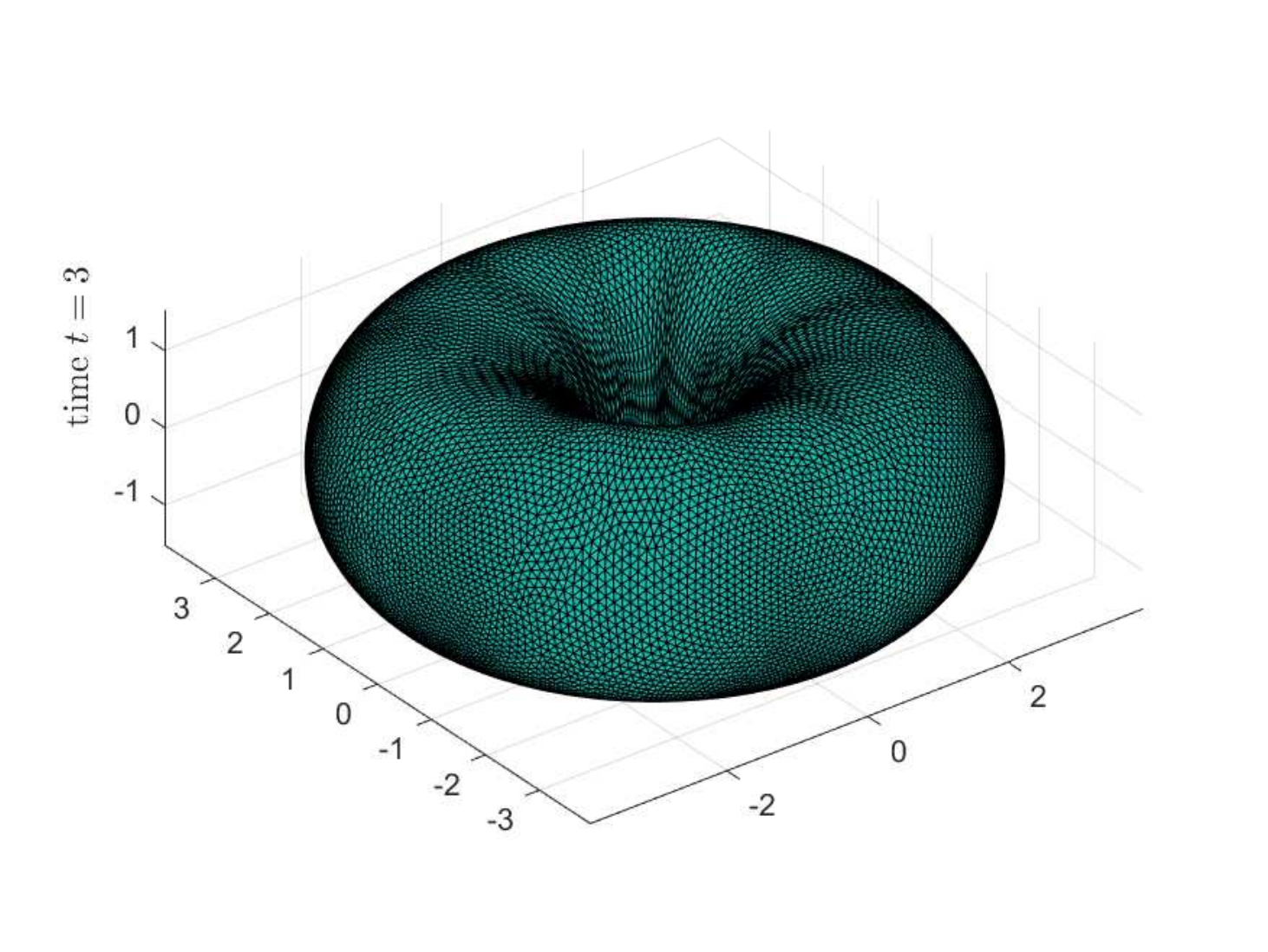} 
	\\
	\includegraphics[width=0.32\textwidth]{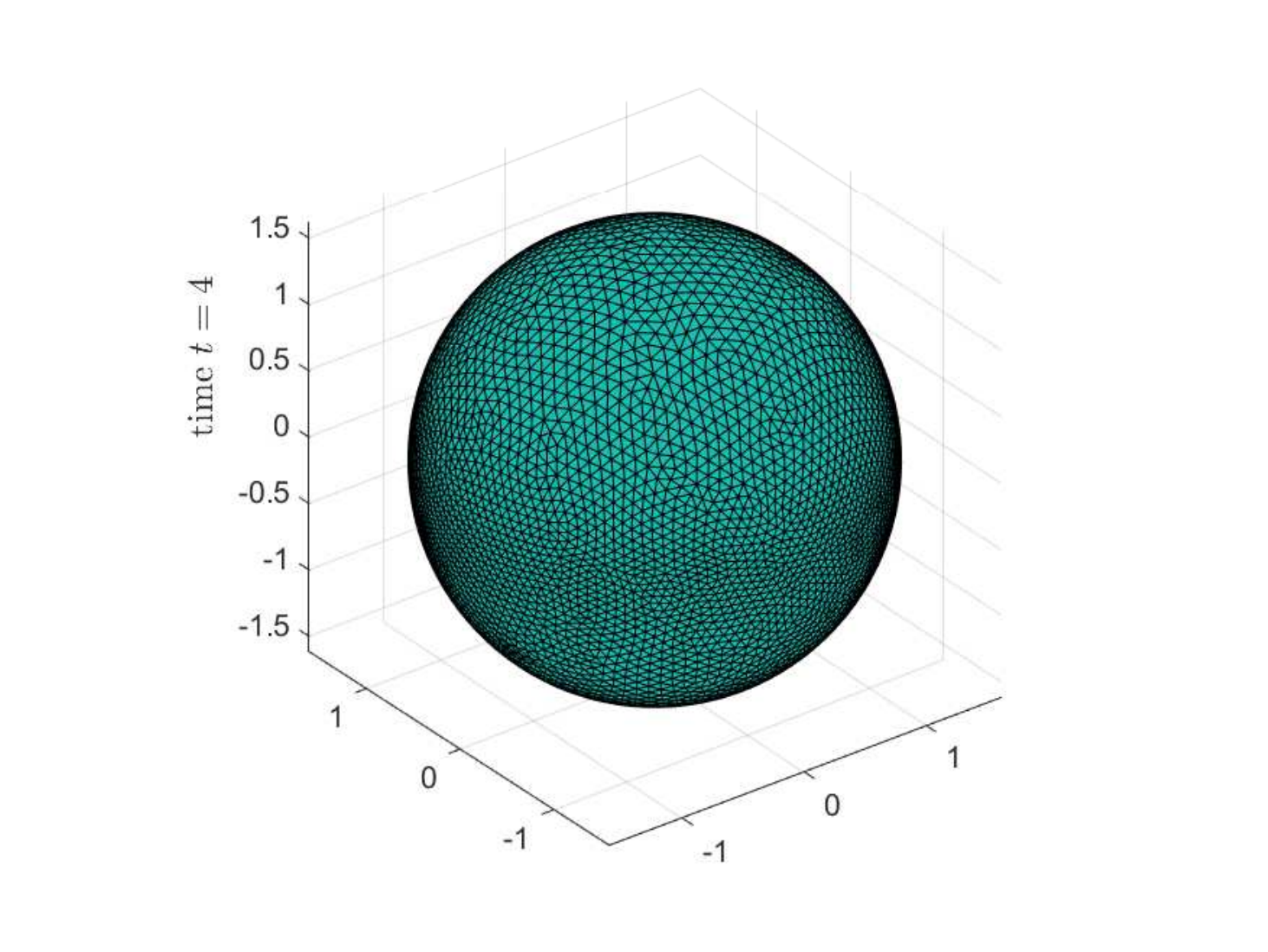} 
	\includegraphics[width=0.32\textwidth]{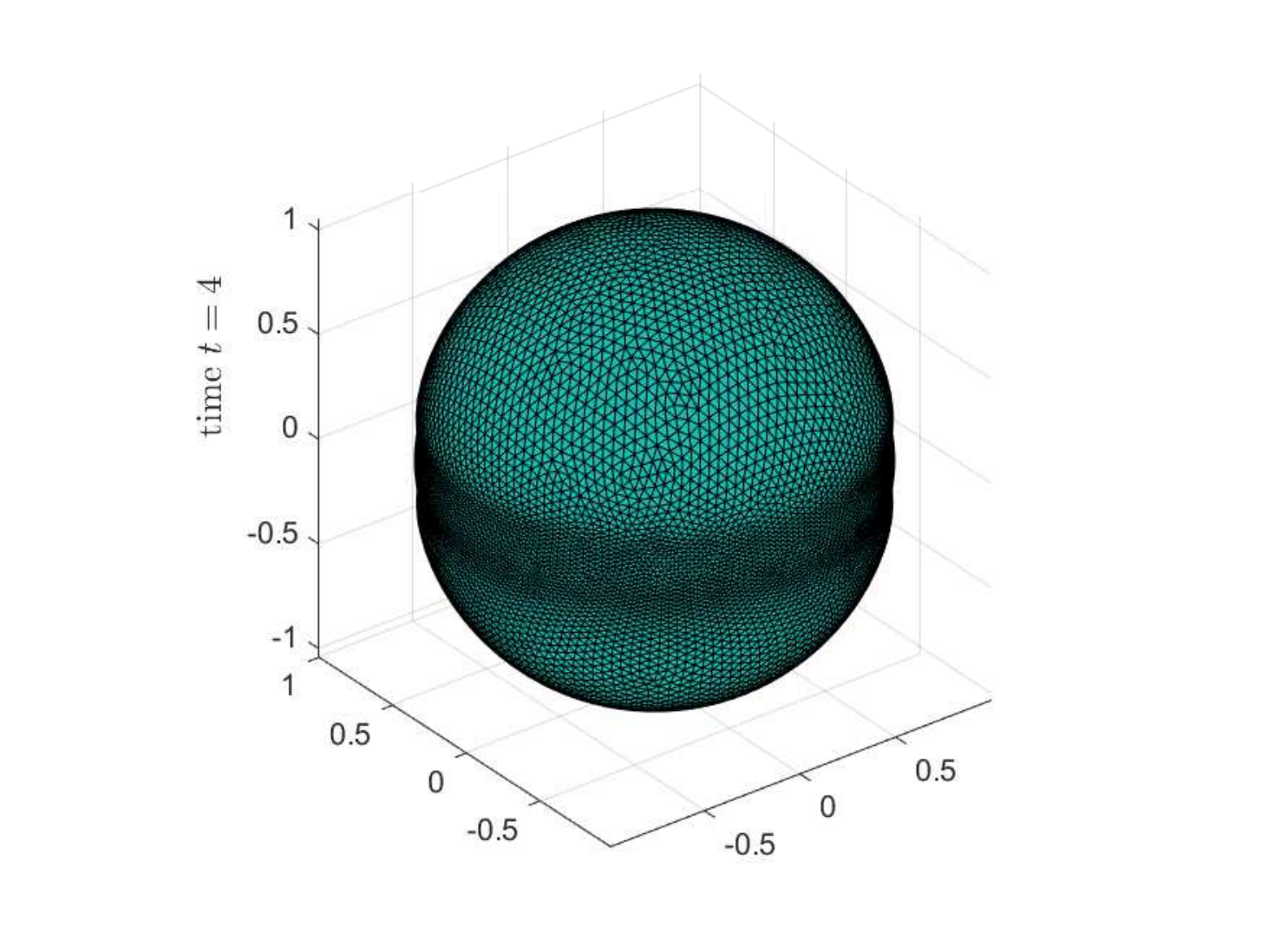} 
	\includegraphics[width=0.32\textwidth]{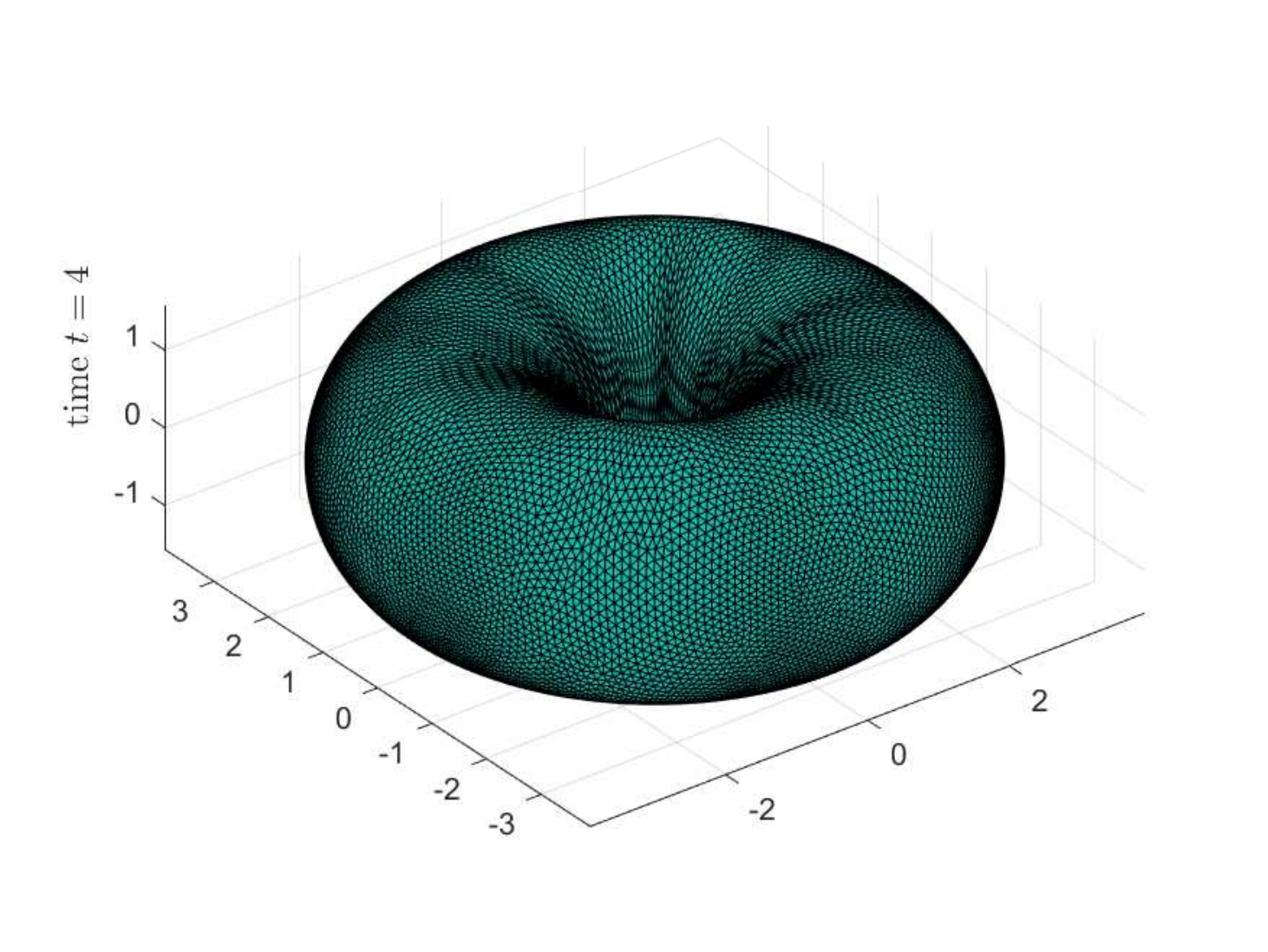} 
	\\
	\includegraphics[width=0.32\textwidth]{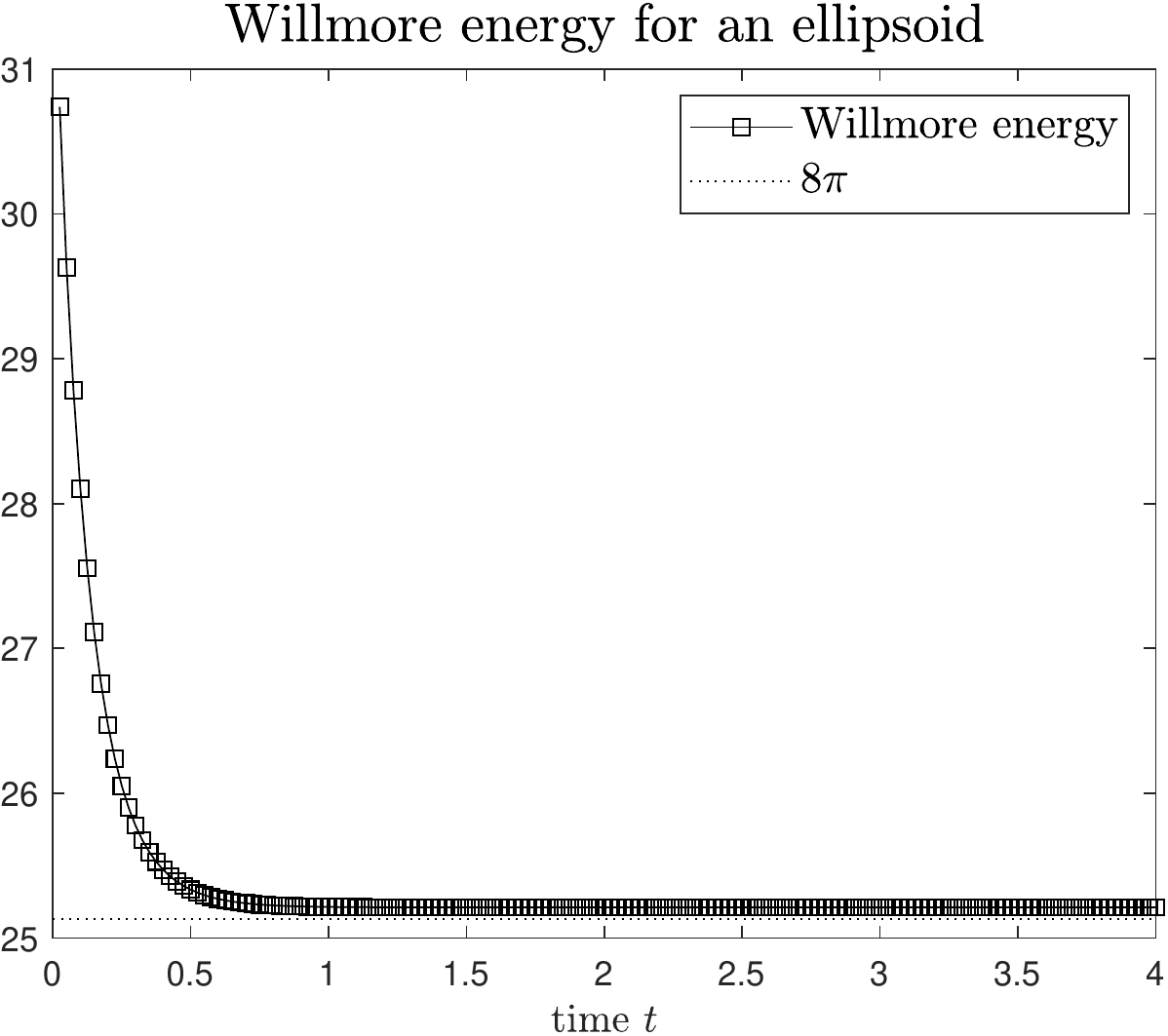}
	\includegraphics[width=0.32\textwidth]{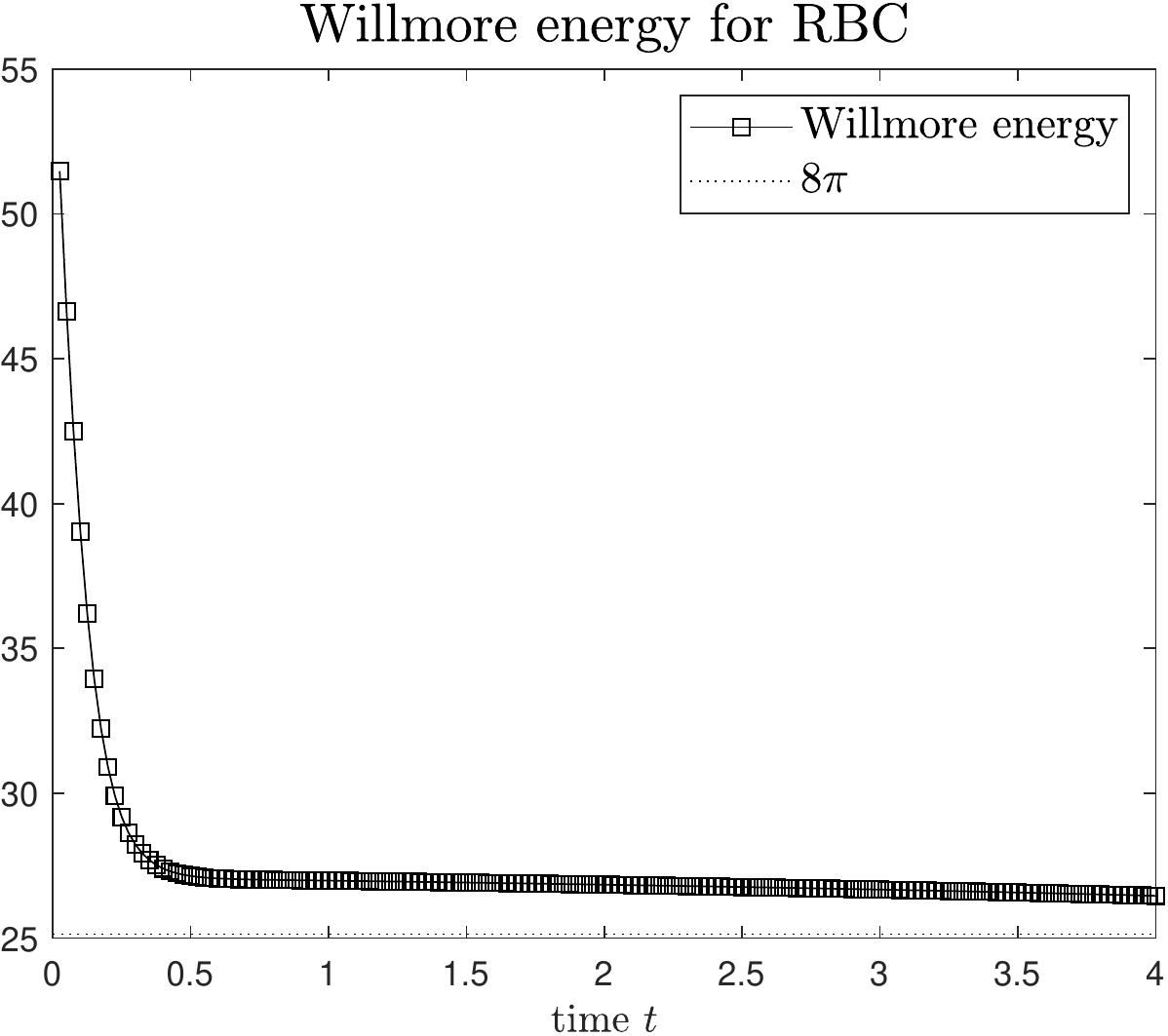}
	\includegraphics[width=0.32\textwidth]{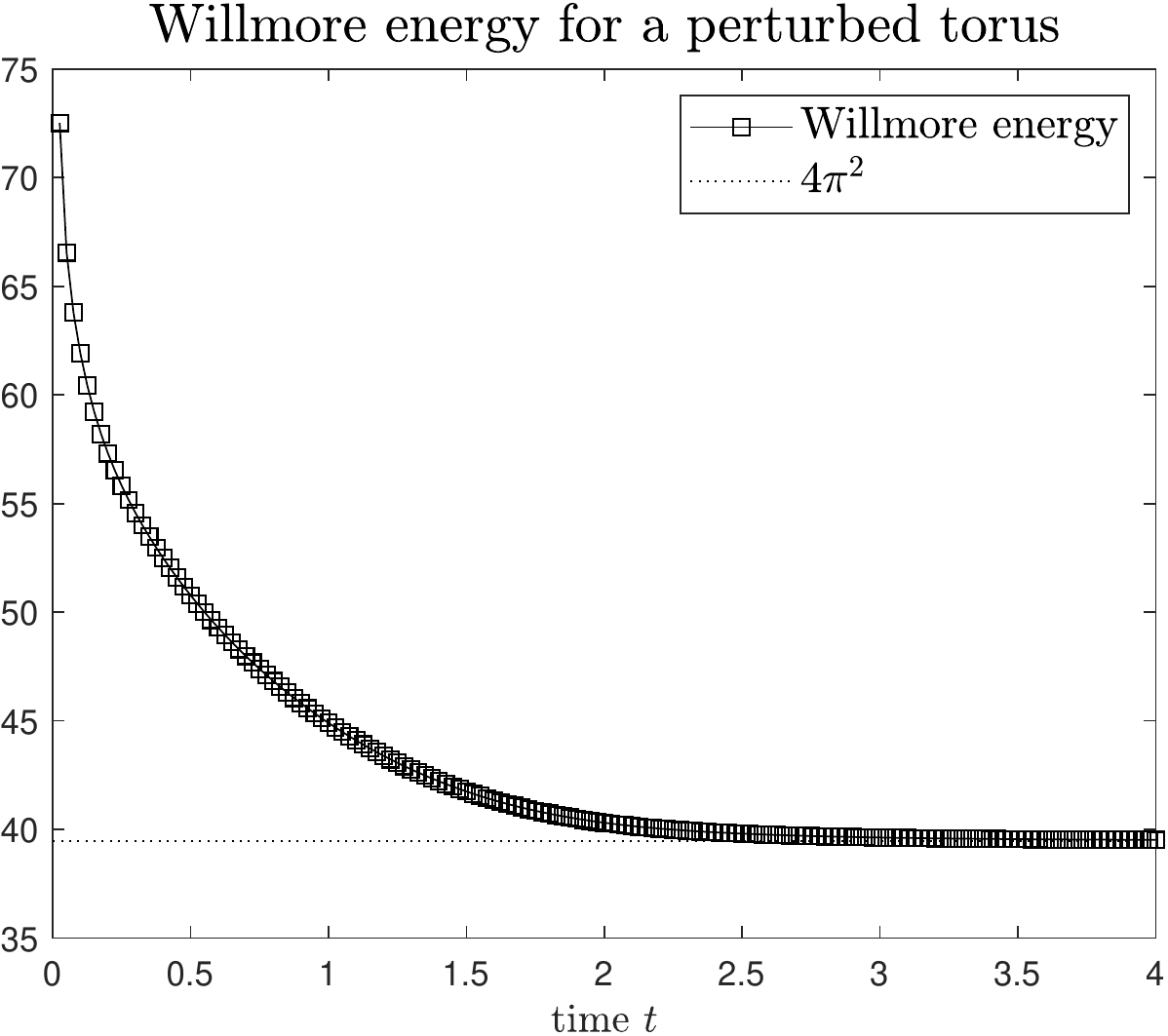}
	\caption{Surface evolutions at different times and corresponding Willmore energy}
	\label{fig:W_E}
\end{figure}

\clearpage

\section*{Appendix: Proof of Lemma \ref{lemma:gradient-laplace commutator formula}}

We came up with two independent proofs of Lemma \ref{lemma:gradient-laplace commutator formula}, one based on local coordinates and the other one based on the formalism of Dziuk and Elliott in \cite{DziukElliott_acta}. As should be, both approaches yield the same result as stated in Lemma \ref{lemma:gradient-laplace commutator formula}. Here we present the more straightforward proof based on \cite{DziukElliott_acta}.

The following commutator formula for tangential differential operators ($D_i = (\nbg)_i$) is shown in \cite[Lemma~2.6]{DziukElliott_acta}:
\begin{equation}
\label{eq:commutator formula - surface derivatives}
	D_i D_j u - D_j D_i u 
%	= (A \nbg u)_j \nu_i - (A \nbg u)_i \nu_j 
	= A_{jk} D_k u \nu_i - A_{ik} D_k u \nu_j \qquad (i , j = 1, 2, 3) ,
\end{equation}
where we use the convention to sum over repeated indices.
%We now prove the following commutator formula, valid on any regular two-dimensional surface $\Ga$ and for any regular function~$f$ on $\Ga$,
%\begin{equation}
%\label{eq:commutator formula}
%	\begin{aligned}
%		\laplace_{\Ga} \nb_{\Ga} f - \nb_{\Ga} \laplace_{\Ga} f = &\ \\
%		D_i D_i D_j f - D_j D_i D_i f = &\ .......
%		K  \, \nb_{\Ga} f .
%	\end{aligned} 
%\end{equation}
%
We also recall that 
	$A_{ij} = \ D_i \nu_j = A_{ji}$ and $H = {\rm tr}(\nbg \nu) = D_i \nu_i$.
	
Using \eqref{eq:commutator formula - surface derivatives} we obtain (cf.~the proof of Lemma~3.2 in \cite{DziukElliott_acta})
\begin{align*}
	D_i D_i D_j u = &\ D_i \big( D_j D_i u + A_{jk} D_k u \nu_i - A_{ik} D_k u \nu_j \big) \\
	= &\ D_i D_j D_i u + D_i \big( D_k u (A_{jk} \nu_i - A_{ik} \nu_j) \big) \\
	= &\ D_i D_j D_i u + D_i D_k u \big( A_{jk} \nu_i - A_{ik} \nu_j \big) \\
	&\ \phantom{D_i D_j D_i u } + D_k u D_i \big( A_{jk} \nu_i - A_{ik} \nu_j \big) .
\end{align*}
The second term is then rewritten as
\begin{align*}
	&\hspace{-5mm}D_i D_k u \big( A_{jk} \nu_i - A_{ik} \nu_j \big) 
	= \ A_{jk} D_i D_k u \nu_i - A_{ik} D_i D_k u \nu_j \\
	= &\ A_{jk} \big( D_k D_i u + A_{kl} D_l u \nu_i - A_{il} D_l u \nu_k \big) \nu_i - A_{ik} D_i D_k u \nu_j \\
	= &\ A_{jk} (D_k D_i u \nu_i) + A_{jk} A_{kl} D_l u \nu_i\nu_i - A_{jk} A_{il} D_l u \nu_k \nu_i - A_{ik} D_i D_k u \nu_j \\
	= &\ - A_{jk} A_{ki} D_i u + A_{jk} A_{kl} D_l u - A_{jk} A_{il} D_l u \nu_k \nu_i - A_{ik} D_i D_k u \nu_j \\
	= &\ - (A^2)_{ji} D_i u + (A^2)_{jl} D_l u - A_{jk} A_{il} D_l u \nu_k \nu_i - A_{ik} D_i D_k u \nu_j \\
	= &\ - A_{jk} A_{il} D_l u \nu_k \nu_i - A_{ik} D_i D_k u \nu_j \\
	= &\  - A_{ik} D_i D_k u \nu_j . 
\end{align*}
%[kb: Furthermore, the second term is orthogonal to tangential gradients, cf.~\cite[Lemma~3.2]{DziukElliott_acta}. I cannot explain this.]
In particular for the last term we have:
\begin{align*}
	D_i \big( A_{jk} \nu_i - A_{ik} \nu_j \big) 
	= &\ D_i A_{jk} \nu_i - D_i A_{ik} \nu_j + A_{jk} D_i \nu_i - A_{ik} D_i \nu_j \\
	= &\ D_i A_{jk} \nu_i - D_i A_{ik} \nu_j + A_{jk} H - A_{ki} A_{ij} \\
	= &\ -\Delta_{\Gamma}\nu_k \nu_j\eby + (HA - A^2)_{jk} .
\end{align*}
%[kb: The first two terms should be zero, cf.~\cite[Lemma~3.2]{DziukElliott_acta}. I cannot explain this.] [kb update: Because this is not zero, but eventually will give the $\Delta_{\Ga} \nu$ term.]
%\bby
%[by: In \cite[Lemma~3.2]{DziukElliott_acta} I didn't find $D_i A_{jk} \nu_i - D_i A_{ik} \nu_j=0$ ? 
%
%$D_i A_{jk} \nu_i= \nu\cdot\nabla_{\Gamma} A_{jk} =0$ 
%and $D_i A_{ik} \nu_j= D_i D_i\nu_k \nu_j=\Delta_{\Gamma}\nu_k \nu_j$. 
%]
%\eby
The third-order term is again rewritten using \eqref{eq:commutator formula - surface derivatives} as
\begin{align*}
	D_i D_j D_i u = &\ D_j D_i D_i u + A_{jk} D_k D_i u \nu_i - A_{ik} D_k D_i u \nu_j , 
\end{align*}
which is further rewritten using
\begin{align*}
	D_k D_i u \nu_i = &\ D_k (D_i u \nu_i) - D_k \nu_i D_i u = - A_{ki} D_i u , \\
	D_k D_i u \nu_j = &\ D_k (D_i u \nu_j) - D_k \nu_j D_i u .
\end{align*}
Therefore,
\begin{align*}
	D_i D_j D_i u = &\ D_j D_i D_i u -  A_{jk} A_{ki} D_i u - A_{ik} D_k D_i u \nu_j \\
	= &\ D_j D_i D_i u -  (A^2)_{ji} D_i u - A_{ik} D_k D_i u \nu_j .
\end{align*}
Altogether, the above calculations yield
\begin{align*}
&D_i D_i D_j u \\
&= D_j D_i D_i u -  (A^2)_{ji} D_i u -  2A_{ik} D_k D_i u \nu_j  -\Delta_{\Gamma}\nu_k \nu_j D_k u + (HA - A^2)_{jk} D_k u \\
&= D_j D_i D_i u -  (A^2)_{ji} D_i u -  2D_k (A_{ik} D_i u) \nu_j 
+ 2D_k A_{ik} D_i u \nu_j   
\\
&\quad -\Delta_{\Gamma}\nu_k \nu_j D_k u + (HA - A^2)_{jk} D_k u \\
&= D_j D_i D_i u -  (A^2)_{ji} D_i u -  2D_k (A_{ik} D_i u) \nu_j 
+ \Delta_{\Gamma}\nu_k \nu_j D_k u + (HA - A^2)_{jk} D_k u .
\end{align*}
This implies
\begin{align*}
\Delta_{\Gamma} \nabla_{\Gamma} u 
	= &\ \nabla_{\Gamma} \Delta_{\Gamma} u -  A^2 \nabla_{\Gamma} u 
	-2\nabla_{\Gamma}\cdot (A\nabla_{\Gamma}u) \nu  \\
	&\quad + (\Delta_{\Gamma}\nu\cdot\nabla_{\Gamma}u) \nu + (HA - A^2) \nabla_{\Gamma} u ,
\end{align*}
which is the same as \eqref{eq:gradient-laplace commutator formula}. 
%\eby
%
%\bbk
%Altogether, the above calculations yield
%\begin{align*}
%	D_i D_i D_j u 
%	= &\ D_j D_i D_i u -  (A^2)_{ji} D_i u - A_{ik} D_k D_i u \nu_j 
%	+ D_i D_k u \big( A_{jk} \nu_i - A_{ik} \nu_j \big) \\ 
%	&\ + (HA - A^2)_{jk} D_k u \\
%	= &\ D_j D_i D_i u - A_{ik} D_k D_i u \nu_j + D_i D_k u \big( A_{jk} \nu_i - A_{ik} \nu_j \big) \\ 
%	&\ + (HA - 2 A^2)_{jk} D_k u \\
%	= &\ D_j D_i D_i u - A_{ik} D_k D_i u \nu_j \\
%	&\ - A_{jk} A_{il} D_l u \nu_k \nu_i - A_{ik} D_i D_k u \nu_j \\ 
%	&\ + (HA - 2 A^2)_{jk} D_k u \\
%	= &\ D_j D_i D_i u - A_{ik} ( D_k D_i u + D_i D_k u ) \nu_j - A_{jk} A_{il} D_l u \nu_k \nu_i \\ 
%	&\ + (HA - 2 A^2)_{jk} D_k u \\
%	= &\ D_j D_i D_i u - 2 A_{ik} D_k D_i u \nu_j + \Big[ (A^2)_{il} D_l u \nu_i \nu_j + (A_{ik} A_{il} D_l u \nu_j - A_{jk} A_{il} D_l u \nu_i) \nu_k \Big] \\ 
%	&\ + (HA - 2 A^2)_{jk} D_k u \\
%\end{align*}
%
%[kb update: Christian have realized that the three terms in the bracket should vanish.]
%
%....
\qed

\section*{Acknowledgement}

We thank J\"org Nick for helpful discussions and his nice debugging idea.\\
The work of  Bal\'azs Kov\'acs and Christian Lubich is supported by Deutsche Forschungsgemeinschaft -- Project-ID 258734477 -- SFB 1173.

\bibliographystyle{alpha}
\bibliography{evolving_surface_literature}

\newcommand{\etalchar}[1]{$^{#1}$}
\begin{thebibliography}{CENC96}

\bibitem[Bar13]{Bartels-2013}
S.~Bartels.
\newblock {A simple scheme for the approximation of the elastic flow of
  inextensible curves}.
\newblock {\em IMA J. Numer. Anal.}, 33(4):1115--1125, 2013.

\bibitem[BGN07a]{Barrett2007441}
J.W. Barrett, H.~Garcke, and R.~N{\"u}rnberg.
\newblock {A parametric finite element method for fourth order geometric
  evolution equations}.
\newblock {\em J. Comput. Phys.}, 222(1):441--467, 2007.

\bibitem[BGN07b]{BGN2007}
J.W. Barrett, H.~Garcke, and R.~N{\"u}rnberg.
\newblock On the variational approximation of combined second and fourth order
  geometric evolution equations.
\newblock {\em SIAM J. Sci. Comput.}, 29(3):1006--1041, 2007.

\bibitem[BGN08]{BGN2008Willmore}
J.W. Barrett, H.~Garcke, and R.~N{\"u}rnberg.
\newblock Parametric approximation of {W}illmore flow and related geometric
  evolution equations.
\newblock {\em SIAM J. on Sci. Comput.}, 31(1):225--253, 2008.

\bibitem[BGN15]{BGN_vesicles}
J.W. Barrett, H.~Garcke, and R.~N{\"u}rnberg.
\newblock Numerical computations of the dynamics of fluidic membranes and
  vesicles.
\newblock {\em Phys. Rev. E}, 92(5):052704, 2015.

\bibitem[BGN19]{BGN_survey}
J.W. Barrett, H.~Garcke, and R.~N{\"u}rnberg.
\newblock Parametric finite element approximations of curvature driven
  interface evolutions.
\newblock {\em arXiv:1903.09462v1}, 2019.

\bibitem[BJSW17]{Bao-2017-SIAP}
W.~Bao, W.~Jiang, D.~J. Srolovitz, and Y.~Wang.
\newblock {Stable equilibria of anisotropic particles on substrates: a
  generalized Winterbottom construction}.
\newblock {\em SIAM J. Appl. Math.}, 77(6):2093--2118, 2017.

\bibitem[BJWZ17]{Bao-2017-JCP}
W.~Bao, W.~Jiang, Y.~Wang, and Q.~Zhao.
\newblock A parametric finite element method for solid-state dewetting problems
  with anisotropic surface energies.
\newblock {\em J. Comput. Phys.}, 330:380--400, 2017.

\bibitem[Bla29]{Blaschke}
W.~Blaschke.
\newblock {\em Vorlesungen {\"u}ber Differentialgeometrie III}.
\newblock Grundlehren der mathematischen Wissenschaften. Springer-Verlag,
  Berlin, 1929.

\bibitem[BMN04]{Bansch-Morin-Nochetto-2004}
E.~B{\"a}nsch, P.~Morin, and R.~H. Nochetto.
\newblock Surface diffusion of graphs: variational formulation, error analysis,
  and simulation.
\newblock {\em SIAM J. Numer. Anal.}, 42(2):773--799, 2004.

\bibitem[BMN05]{Bansch-Morin-Nochetto-2005}
E.~B\"ansch, P.~Morin, and R.~H. Nochetto.
\newblock A finite element method for surface diffusion: the parametric case.
\newblock {\em J. Comput. Phys.}, 203(1):321--343, 2005.

\bibitem[BNP10]{Bonito-Nochetto-Sebastian-2010}
A.~Bonito, R.~H. Nochetto, and M.~S. Pauletti.
\newblock {Parametric FEM for geometric biomembranes}.
\newblock {\em J. Comput. Phys.}, 229(9):3171--3188, 2010.

\bibitem[CENC96]{CahnElliottNovickCohen}
J.W. Cahn, C.M. Elliott, and A.~Novick-Cohen.
\newblock The {C}ahn-{H}illiard equation with a concentration dependent
  mobility: motion by minus the {L}aplacian of the mean curvature.
\newblock {\em European J. Appl. Math.}, 7(3):287--301, 1996.

\bibitem[CLS{\etalchar{+}}18]{Chen-Shen-2018}
Y.~Chen, J.~Lowengrub, J.~Shen, C.~Wang, and S.~Wise.
\newblock {Efficient energy stable schemes for isotropic and strongly
  anisotropic Cahn--Hilliard systems with the Willmore regularization}.
\newblock {\em J. Comput. Phys.}, 365:56--73, 2018.

\bibitem[DD06]{Deckelnick-Dziuk-2006}
K.~Deckelnick and G.~Dziuk.
\newblock {Error analysis of a finite element method for the Willmore flow of
  graphs}.
\newblock {\em Interfaces Free Bound.}, 8(1):21--46, 2006.

\bibitem[DD09]{Deckelnick-Dziuk-2009}
K.~Deckelnick and G.~Dziuk.
\newblock Error analysis for the elastic flow of parametrized curves.
\newblock {\em Math. Comp.}, 78(266):645--671, 2009.

\bibitem[DDE03]{Deckelnick-Dziuk-Elliott-2003-SINUM}
K.~Deckelnick, G.~Dziuk, and C.M. Elliott.
\newblock Error analysis of a semidiscrete numerical scheme for diffusion in
  axially symmetric surfaces.
\newblock {\em SIAM J. Numer. Anal.}, 41(6):2161--2179, 2003.

\bibitem[DDE05a]{DeckelnickDE2005}
K.~Deckelnick, G.~Dziuk, and C.~M. Elliott.
\newblock Computation of geometric partial differential equations and mean
  curvature flow.
\newblock {\em Acta Numerica}, 14:139--232, 2005.

\bibitem[DDE05b]{Deckelnick-Dziuk-Elliott-2005-SINUM}
K.~Deckelnick, G.~Dziuk, and C.M. Elliott.
\newblock Fully discrete finite element approximation for anisotropic surface
  diffusion of graphs.
\newblock {\em SIAM J. Numer. Anal.}, 43(3):1112--1138, 2005.

\bibitem[DE07]{DziukElliott_ESFEM}
G.~Dziuk and {C.M.} Elliott.
\newblock Finite elements on evolving surfaces.
\newblock {\em IMA J. Numer. Anal.}, 27(2):262--292, 2007.

\bibitem[DE13a]{DziukElliott_acta}
G.~Dziuk and {C.M.} Elliott.
\newblock Finite element methods for surface {PDE}s.
\newblock {\em Acta Numerica}, 22:289--396, 2013.

\bibitem[DE13b]{DziukElliott_L2}
G.~Dziuk and {C.M.} Elliott.
\newblock {$L^2$}--estimates for the evolving surface finite element method.
\newblock {\em Math. Comp.}, 82(281):1--24, 2013.

\bibitem[Dem09]{Demlow2009}
A.~Demlow.
\newblock Higher--order finite element methods and pointwise error estimates
  for elliptic problems on surfaces.
\newblock {\em SIAM J. Numer. Anal.}, 47(2):805--807, 2009.

\bibitem[DKS15]{Deckelnick-Katz-Schieweck-2015}
K.~Deckelnick, J.~Katz, and F.~Schieweck.
\newblock {A $C^1$-finite element method for the Willmore flow of
  two-dimensional graphs}.
\newblock {\em Math. Comp.}, 84(296):2617--2643, 2015.

\bibitem[DLM12]{DziukLubichMansour_rksurf}
G.~Dziuk, {C.} Lubich, and D.E. Mansour.
\newblock {R}unge--{K}utta time discretization of parabolic differential
  equations on evolving surfaces.
\newblock {\em IMA J. Numer. Anal.}, 32(2):394--416, 2012.

\bibitem[Dzi88]{Dziuk88}
G.~Dziuk.
\newblock Finite elements for the {B}eltrami operator on arbitrary surfaces.
\newblock {\em Partial differential equations and calculus of variations,
  Lecture Notes in Math., 1357, Springer, Berlin}, pages 142--155, 1988.

\bibitem[Dzi08]{Dziuk_Willmore}
G.~Dziuk.
\newblock Computational parametric {W}illmore flow.
\newblock {\em Numer. Math.}, 111(1):55--80, 2008.

\bibitem[Eck12]{Ecker2012}
K.~Ecker.
\newblock {\em Regularity theory for mean curvature flow}.
\newblock Birkh\"auser, Boston, 2012.

\bibitem[ES10]{ElliottStinner_biomembranes}
C.M. Elliott and B.~Stinner.
\newblock Modeling and computation of two phase geometric biomembranes using
  surface finite elements.
\newblock {\em J. Comput. Phys.}, 229(18):6585--6612, 2010.

\bibitem[Hel73]{Helfrich}
W.~Helfrich.
\newblock Elastic properties of lipid bilayers: theory and possible
  experiments.
\newblock {\em Zeitschrift f{\"u}r Naturforschung C}, 28(11-12):693--703, 1973.

\bibitem[Hui84]{Huisken1984}
G.~Huisken.
\newblock Flow by mean curvature of convex surfaces into spheres.
\newblock {\em J. Differential Geometry}, 20(1):237--266, 1984.

\bibitem[KLL19]{MCF}
B.~Kov{\'a}cs, B.~Li, and { C.} Lubich.
\newblock A convergent evolving finite element algorithm for mean curvature
  flow of closed surfaces.
\newblock {\em Numer. Math.}, 143(4):797--853, 2019.

\bibitem[KLLP17]{KLLP2017}
B.~Kov\'{a}cs, B.~Li, { C.} Lubich, and {C.A.} {Power Guerra}.
\newblock Convergence of finite elements on an evolving surface driven by
  diffusion on the surface.
\newblock {\em Numer. Math.}, 137(3):643--689, 2017.

\bibitem[Kov18]{Kovacs2017}
B.~Kov\'{a}cs.
\newblock High-order evolving surface finite element method for parabolic
  problems on evolving surfaces.
\newblock {\em IMA J. Numer. Anal.}, 38(1):430--459, 2018.

\bibitem[KS01]{KuwertSchaetzle_Willmore2}
E.~Kuwert and R.~Sch\"{a}tzle.
\newblock The {W}illmore flow with small initial energy.
\newblock {\em J. Differential Geom.}, 57(3):409--441, 2001.

\bibitem[KS02]{KuwertSchaetzle_Willmore}
E.~Kuwert and R.~Sch\"{a}tzle.
\newblock Gradient flow for the {W}illmore functional.
\newblock {\em Comm. Anal. Geom.}, 10(2):307--339, 2002.

\bibitem[LM15]{LubichMansour_wave}
{C.} Lubich and D.E. Mansour.
\newblock Variational discretization of wave equations on evolving surfaces.
\newblock {\em Math. Comp.}, 84(292):513--542, 2015.

\bibitem[Man12]{Mantegazza}
C.~Mantegazza.
\newblock {\em {Lecture Notes on Mean Curvature Flow}}.
\newblock Progress in Mathematics, Volume 290. Birkh{\"a}user, Corrected
  Printing 2012.

\bibitem[MN14]{2014-Marques-Neves}
F.~C. Marques and A.~Neves.
\newblock {Min-Max theory and the Willmore conjecture}.
\newblock {\em Annals Math.}, 179(2):683--782, 2014.

\bibitem[Mul57]{Mullins-1957}
W.~W. Mullins.
\newblock Theory of thermal grooving.
\newblock {\em J. Appl. Phys.}, 28:333--339, 1957.

\bibitem[Poz15]{Pozzi_anisotropicWillmore}
P.~Pozzi.
\newblock Computational anisotropic {W}illmore flow.
\newblock {\em Interfaces Free Bound.}, 17(2):189--232, 2015.

\bibitem[PS04]{distmesh}
P.-O. Persson and G.~Strang.
\newblock A simple mesh generator in {MATLAB}.
\newblock {\em SIAM Review}, 46(2):329--345, 2004.

\bibitem[PS18]{PozziStinner_elasticcurves}
P.~Pozzi and B.~Stinner.
\newblock Elastic flow interacting with a lateral diffusion process: the
  one-dimensional graph case.
\newblock {\em IMA J. Numer. Anal.}, 39(1):201--234, 03 2018.

\bibitem[Rus05]{Rusu}
R.E. Rusu.
\newblock An algorithm for the elastic flow of surfaces.
\newblock {\em Interfaces and Free Boundaries}, 7(3):229--239, 2005.

\bibitem[Tho24]{Thomsen}
G.~Thomsen.
\newblock Grundlagen der konformen fl{\"a}chentheorie.
\newblock {\em Abh. Math. Seminar Univ. Hamburg}, 3(1):31--56, 1924.

\bibitem[Wal15]{Walker2015}
S.W. Walker.
\newblock {\em The Shape of Things: a Practical Guide to Differential Geometry
  and the Shape Derivative}.
\newblock SIAM, Philadelphia, 2015.

\bibitem[Wil65]{Willmore65}
T.J. Willmore.
\newblock Note on embedded surfaces.
\newblock {\em An. Sti. Univ.“Al. I. Cuza” Iasi Sect. I a Mat.(NS) B},
  11:493--496, 1965.

\bibitem[Wil93]{Willmore_book}
T.~J. Willmore.
\newblock {\em Riemannian Geometry}.
\newblock Oxford Science Publications. The Clarendon Press, Oxford University
  Press, New York, 1993.

\bibitem[ZJB20]{Zhao-Jiang-Bao-2020}
Q.~Zhao, W.~Jiang, and W.~Bao.
\newblock A parametric finite element method for solid-state dewetting problems
  in three dimensions.
\newblock {\em SIAM J. Sci. Comput.}, 42(1):B327--B352, 2020.

\end{thebibliography}
	
\end{document}